\documentclass[letterpaper,12pt]{amsart}
\RequirePackage[top=1in,bottom=1in,left=1.35in,right=1.35in,includehead]{geometry}

\usepackage{amsmath,amsfonts, amssymb, amsthm, amscd}

\usepackage{colortbl}
\usepackage{color,float}
\usepackage{hyperref, enumitem}
\usepackage{mathtools}
\usepackage[capitalize,nameinlink,noabbrev,nosort]{cleveref}
\usepackage{graphicx}
\usepackage{tikz}
\usepackage{multirow}
\usepackage{cancel}

\usepackage[leqno]{amsmath}
\makeatletter
\newcommand{\leqnomode}{\tagsleft@true}
\newcommand{\reqnomode}{\tagsleft@false}
\makeatother

\usepackage{cases}
\usepackage{subfigure}
\usepackage{color}
\usepackage{latexsym, longtable}
\usepackage{comment}

\usepackage[T1]{fontenc}
\usepackage{tikz-cd}

\theoremstyle{plain}
\newtheorem{theorem}{Theorem}[section]
\newtheorem{proposition}[theorem]{Proposition}

\newtheorem{lemma}[theorem]{Lemma}
\newtheorem{cor}[theorem]{Corollary}
\newtheorem{corollary}[theorem]{Corollary}

\newtheorem{conjecture}[theorem]{Conjecture}
\theoremstyle{definition}

\newtheorem{defn}[theorem]{Definition}
\newtheorem{question}[theorem]{Question}
\newtheorem{example}[theorem]{Example}

\newtheorem{remark}[theorem]{Remark}
\numberwithin{equation}{section}
\crefname{defn}{Definition}{Definitions}

\newtheorem*{mainresult*}{Main Result}


\DeclareMathOperator{\PQT}{Tab}

\DeclareMathOperator{\PDS}{PDS}

\DeclareMathOperator{\dg}{dg}

\DeclareMathOperator{\arm}{\text{arm}}
\DeclareMathOperator{\leg}{\text{leg}}
\DeclareMathOperator{\maj}{maj}

\DeclareMathOperator{\inv}{inv}

\DeclareMathOperator{\Des}{Des}
\DeclareMathOperator{\coinv}{coinv}
\DeclareMathOperator{\ribbon}{ribbon}

\DeclareMathOperator{\quinv}{quinv}

\DeclareMathOperator{\TAZRP}{TAZRP}

\DeclareMathOperator{\South}{South}
\DeclareMathOperator{\SSYT}{SSYT}
\newcommand{\LLT}{LLT}
\newcommand{\rw}{rw}
\newcommand{\boldnu}{\widehat{\boldsymbol{\nu}}}
\newcommand{\invtwo}{\widehat{\inv}}
\newcommand{\armtwo}{\widehat{\arm}}
\newcommand{\qbinom}[2]{\bgroup\renewcommand*{\arraystretch}{1}\begin{bmatrix} #1 \\ #2\end{bmatrix} \egroup}


\newlength\cellsize \setlength\cellsize{12\unitlength}
\savebox2{%
\begin{picture}(12,12)
\put(0,0){\line(1,0){12}}
\put(0,0){\line(0,1){12}}
\put(12,0){\line(0,1){12}}
\put(0,12){\line(1,0){12}}
\end{picture}}

\newcommand\cellify[1]{\def\thearg{#1}\def\nothing{}%
\ifx\thearg\nothing
\vrule width0pt height\cellsize depth0pt\else
\hbox to 0pt{\usebox2\hss}\fi%
\vbox to 12\unitlength{
\vss
\hbox to 12\unitlength{\hss$#1$\hss}
\vss}}

\newcommand\tableau[1]{\vtop{\let\\=\cr
\setlength\baselineskip{-16000pt}
\setlength\lineskiplimit{16000pt}
\setlength\lineskip{0pt}
\halign{&\cellify{##}\cr#1\crcr}}}

\savebox3{%
\begin{picture}(12,12)
\put(0,0){\line(1,0){12}}
\put(0,0){\line(0,1){12}}
\put(12,0){\line(0,1){12}}
\put(0,12){\line(1,0){12}}
\end{picture}}

\newcommand\expath[1]{%
\hbox to 0pt{\usebox3\hss}%
\vbox to 12\unitlength{
\vss
\hbox to 12\unitlength{\hss$#1$\hss}
\vss}}

\newcommand\cell[3]{
\def\i{#1} \def\j{#2} \def\entry{#3}

\draw (\j-1,-\i)--(\j,-\i)--(\j,-\i+1)--(\j-1,-\i+1)--(\j-1,-\i);
\node at (\j-.5,-\i+.5) {\entry};
}

\newcommand{\qtrip}[3]{
\begin{tikzpicture}[scale=0.5]
\cell{1}{0}{#1} \cell{2}{0}{#2} \cell{2}{2.7}{#3}
\node at (1,-1.5) {$\cdots$};
\end{tikzpicture}
}

\newcommand\cellL[4]{
\def\i{#1} \def\j{#2} \def\entry{#3} \def\labl{#4}
\draw (\j-1,-\i)--(\j,-\i)--(\j,-\i+1)--(\j-1,-\i+1)--(\j-1,-\i);
\node at (\j-.5,-\i+.5) {\entry};
\node at (\j-1.3,-\i+.8) {\tiny\labl};
}

\newcommand\graycell[3]{
\def\i{#1} \def\j{#2} \def\entry{#3}
\draw[fill=gray!60](\j-1,-\i)--(\j,-\i)--(\j,-\i+1)--(\j-1,-\i+1)--(\j-1,-\i);
\node at (\j-.5,-\i+.5) {\entry};
}


\newcommand{\I}{\mathfrak{S}}

\newcommand{\cQ}{\mathcal{Q}}

\renewcommand{\arraystretch}{2}


\begin{document}

\title
[Modified Macdonald polynomials and the multispecies TAZRP: I]
{Modified Macdonald polynomials and the multispecies zero range process: I}

\author[A.~Ayyer]{Arvind Ayyer}
\address{A.~Ayyer, Department of Mathematics, Indian Institute of Science, Bangalore 560 012, India}
\email{arvind@iisc.ac.in}

\author[O.~Mandelshtam]{Olya Mandelshtam}
\address{O.~Mandelshtam, Department of Combinatorics and Optimization, University of Waterloo, Waterloo, ON, Canada}
\email{omandels@uwaterloo.ca}

\author[J.~B.~Martin]{James B.\ Martin}
\address{J.~B.~Martin, Department of Statistics, University of Oxford, UK}
\email{martin@stats.ox.ac.uk}

\begin{abstract}
In this paper we prove a new combinatorial formula for the modified Macdonald polynomials $\widetilde{H}_\lambda(X;q,t)$, motivated by connections to the theory of interacting particle systems from statistical mechanics. The formula involves a new statistic called \emph{queue inversions} on fillings of tableaux. This statistic is closely related to the \emph{multiline queues} which were recently used to give a formula for the Macdonald polynomials $P_\lambda(X;q,t)$. In the case $q=1$ and $X=(1,1,\dots,1)$, that formula had also been shown to compute stationary probabilities for a particle system known as the \emph{multispecies ASEP} on a ring, and it is natural to ask whether a similar connection exists between the modified Macdonald polynomials and a suitable statistical mechanics model. In a sequel to this work, we demonstrate such a connection, showing that the stationary probabilities of the \emph{multispecies totally asymmetric zero-range process} (mTAZRP) on a ring can be computed using tableaux formulas with the queue inversion statistic. This connection extends to arbitrary $X=(x_1,\dots, x_n)$; the $x_i$ play the role of site-dependent jump rates for the mTAZRP. 
\end{abstract}

\keywords{modified Macdonald polynomials, zero range process, TAZRP, tableaux}

\subjclass[2010]{Primary: 05E05; Secondary: 05A10, 05A19, 05A05, 33D52}

\maketitle

\setcounter{tocdepth}{1}

\tableofcontents

\section{Introduction}

The theory of symmetric functions has a long history, having its origins in invariant theory, Galois theory, group theory and, of course, combinatorics. As Stanley~\cite[Notes in Chapter~7]{stanley-ec2} remarks, the first published work on symmetric functions was a derivation of the well-known Newton-Girard identity by A. Girard~\cite{girard-1884} in 1629. The reason for the name is that it was independently rediscovered by Newton around 1666. 

On the other hand, the theory of interacting particle systems is relatively modern. It was an influential paper of Spitzer~\cite{spitzer-1970} in 1970 that initiated the subject and set out the important questions in the field. In particular, the simple exclusion process and the zero-range process were first defined there. 

The focus of this paper is a particular family of symmetric functions, 
the \emph{modified Macdonald polynomials}, motivated by a new link 
to an interactive particle system known as the \emph{multispecies totally
asymmetric zero-range process}. 

Over the last couple of decades, the theory of special functions and symmetric functions have found unexpected connections to diverse interacting particle systems. The asymmetric simple exclusion process  (ASEP) has played a central role in this connection. Building on work of Uchiyama, Sasamoto, and Wadati~\cite{USW,sasamoto-1999}, Corteel and Williams~\cite{CW11} found that the partition function of the single species ASEP with open boundaries is related to moments of Askey-Wilson polynomials, and discovered combinatorial formulas for those quantities. Generalizing this work, Corteel and Williams~\cite{CW18} and also Cantini~\cite{cantini-2017} showed that the partition function of the two-species ASEP with open boundaries is related to certain moments of Koornwinder polynomials, and soon after, the second author along with Corteel and Williams~\cite{CMW-2017} found the associated combinatorial formulas. More generally, Cantini, Garbali, de Gier and Wheeler~\cite{cantini-etal-2016} found that the partition function of a multispecies variant of the ASEP with open boundaries is a specialization of a Koornwinder polynomial, but finding formulas for these quantities is still an outstanding open question.

The multispecies ASEP on a ring is another version of the ASEP that has been found to have deep connections to orthogonal polynomials. Cantini, de Gier and Wheeler~\cite{CGW-2015} showed that the stationary probabilities of the ASEP on a ring are related to the nonsymmetric Macdonald polynomials, and that the partition function of the ASEP on a circle is equal to the symmetric Macdonald polynomial $P_{\lambda}(X;q,t)$ evaluated at $x_1=\cdots=x_n=q=1$. In the totally asymmetric case (at $t=0$),  Ferrari and the third author~\cite{FM07} introduced a recursive formulation which expresses
the stationary probabilities as sums over combinatorial objects known as \emph{multiline queues}. Recently, the third author generalized these multiline queues to compute probabilities for the full ASEP~\cite{martin-2020}.  Building upon that result, the second author together with Corteel and Williams further generalized these multiline queues to incorporate the parameters $x_1,\ldots,x_n,q$ to obtain formulas for $P_{\lambda}(X;q,t)$ and the related nonsymmetric Macdonald polynomials \cite{CMW18}. These formulas interpolate between probabilities of the ASEP on a ring and the Macdonald polynomials.

There are several natural bases for symmetric polynomials. The \emph{monomial symmetric polynomials}, \emph{elementary symmetric polynomials}, \emph{complete homogeneous symmetric polynomials} and \emph{power sum symmetric polynomials} are classical. Probably the single most important family is the family of \emph{Schur polynomials}, for which there are several generalizations. 
The symmetric Macdonald polynomials are a remarkable two-parameter generalization of Schur polynomials that were introduced by Macdonald in 1987 \cite{Mac88, Macdonald}. They are indexed by partitions $\lambda$ and are denoted $P_{\lambda}(X; q,t)$, where $X=(x_1,x_2,\ldots)$ is the alphabet and $q$ and $t$ are parameters; coefficients of $P_{\lambda}(X;q,t)$ are in $\mathbb{Q}(q,t)$. They can be characterized as the unique basis for the ring of symmetric functions in $q$ and $t$ satisfying certain orthogonality and triangularity conditions. 

The modified Macdonald polynomials $\widetilde{H}_{\lambda}(X;q,t)$ defined by Garsia and Haiman \cite{garsia-haiman-1993} are a special form of the $P_{\lambda}$'s with coefficients in $\mathbb{Z}[q,t]$ obtained through a formal operation called \emph{plethysm} from a scalar multiple of $P_{\lambda}(X;q,t)$. Understanding the combinatorics of these polynomials has been a fundamental area of interest in algebraic combinatorics. The most commonly used combinatorial description of $\widetilde{H}_{\lambda}(X;q,t)$ is a tableau formula due to Haglund, Haiman, and Loehr \cite{HHL05}: 
\begin{equation}
\label{eq:HHLformula}
\widetilde{H}_{\lambda}(X;q,t) = \sum_{\sigma:\dg(\lambda) \rightarrow \mathbb{Z}} t^{\inv(\sigma)}q^{\maj(\sigma)}x^{\sigma}
\end{equation}
where the sum is over all fillings of a diagram $\dg(\lambda)$ 
whose shape is the partition $\lambda$. The \emph{inversion} statistic
$\inv(\sigma)$ and the \emph{major index} statistic are explained in Section \ref{sec:definitions}.

Recently, a different combinatorial model for these polynomials has been given by Garbali and Wheeler~\cite{garbali-wheeler-2020}. Independently, the second author together with Corteel, Haglund, Mason, and Williams found formulas for $\widetilde{H}_{\lambda}(X;q,t)$ that were based on the combinatorial interpretation of plethysm applied to multiline queues \cite{compactformula}. One such formula was a compact version of the original tableaux formula of Haglund, Haiman, and Loehr. The authors of \cite{compactformula} also conjectured a formula related
to (\ref{eq:HHLformula}) in which the inversion statistic is replaced by a variant
form which we call the \emph{queue inversion} statistic.
Our main result  is a proof of this conjectural formula:
\begin{mainresult*}
The modified Macdonald polynomial may be written as
\begin{equation}\label{eq:MainResult}
\widetilde{H}_{\lambda}(X;q,t) = \sum_{\sigma:\dg(\lambda) \rightarrow \mathbb{Z}} t^{\quinv(\sigma)}q^{\maj(\sigma)}x^{\sigma},
\end{equation}
where the definition of the queue inversion statistic $\quinv(\sigma)$ is 
again given in Section \ref{sec:definitions}.
\end{mainresult*}
Both the statistic $\inv(\sigma)$ appearing in (\ref{eq:HHLformula})
and the statistic $\quinv(\sigma)$ appearing in (\ref{eq:MainResult})
are defined in terms of the content of particular triples of cells in the filling $\sigma$. 
Even though these two results look tantalizingly similar, we do 
not know of a bijection translating between the two statistics even in very simple 
cases (for instance tableaux with just two rows). 

A particular motivation for our new representation is a link between the modified Macdonald polynomials at $q=1$ and the multispecies totally asymmetric zero-range process (TAZRP) on the ring, which we establish in a companion paper \cite{AMM2}. 
The stationary probabilities of a TAZRP on a ring with $n$ sites, with site-specific 
parameters $x_1, x_2, \dots, x_n$ and a global parameter $t$, and with
particle-types determined by a partition $\lambda$, 
can be written (suitably nomalized) as polynomials in $x_1,\dots, x_n$ and $t$ whose 
sum is $\widetilde{H}_{\lambda}(X;1,t)$. Moreover, under
the probability distribution on fillings $\sigma$ of $\dg(\lambda)$ 
proportional to the weights $t^{\quinv(\sigma)} x^\sigma$ appearing on the right-hand side
of (\ref{eq:MainResult}), the distribution of the bottom row of the tableau 
projects to the stationary distribution of the TAZRP. 
See Section \ref{sec:mTAZRP} for further details of this connection.

The plan of the article is as follows. In \cref{sec:main}, we introduce necessary definitions and notation and formally state the main result mentioned above. We also give details of the link to the multispecies totally asymmetric zero-range process. \cref{sec:background} gives some further background which will be required for the proofs. In \cref{sec:sym}, we prove the symmetry of the function defined by 
the right-hand side of (\ref{eq:MainResult}). The modified Macdonald polynomials are characterized by the three properties stated in \cref{sec:axioms}. The third property will turn out to be immediate from our definition. In \cref{sec:A1m}, we prove the first of these properties. In \cref{sec:A2m}, we prove the second property in a certain nondegenerate case. Settling the second property in the degenerate case proves to be the main hurdle. This is taken care of in \cref{sec:Phi-degenerate} using a bijection on certain tableaux that is described in \cref{sec:bijPQT}. The latter is in turn obtained via a bijection on words respecting a coinversion-type statistic, which is proved in \cref{sec:bijwords}. In \cref{sec:conclusion}, we conclude the proof of our main result and mention various related questions.

\section{Main result and related work}
\label{sec:main}

\subsection{Definitions and main result}
\label{sec:definitions}
Let $\lambda=(\lambda_1,\ldots,\lambda_k)$ be a partition. The \emph{diagram of type $\lambda$}, which we denote $\dg(\lambda)$, consists of
the cells $\{(r,i), 1\leq i\leq k, 1\leq r\leq \lambda_i\}$, which we depict using
$k$ bottom-justified columns, where the $i$'th column from left to right has $\lambda_i$ boxes. See \cref{fig:readingorder} for an illustration of $\dg((3,2,1,1))$.
The cell $(r,i)$ corresponds to the cell in the $i$'th column in the $r$'th row of $\dg(\lambda)$, where rows are labeled from bottom to top. 
We warn the reader that this is the opposite convention of labelling points in the plane using cartesian coordinates, but is somewhat standard in the literature. We will use this convention throughout the article. 

A \emph{filling} of type $(\lambda,n)$ is a function $\sigma:\dg(\lambda) \rightarrow [n]$ defined on the cells of $\dg(\lambda)$ (where $[n]=\{1,2,\dots, n\}$).
We also refer to a diagram together with a filling of it as a \emph{tableau}.
Let $\PQT(\lambda,n)$ be the set of fillings of type $(\lambda, n)$,
and $\PQT(\lambda)$ the set of functions $\sigma:\dg(\lambda) \rightarrow \mathbb{Z}^+$.

\begin{center}
\begin{figure}[h]
\[
\tableau{1\\3&2\\7&6&5&4}
\]
\caption{The diagram $\dg((3,2,1,1))$ with the cells labeled according to the reading order in \cref{def:readingorder}. Our convention is that the cell labelled $5$ has coordinates $(1,3)$.}
\label{fig:readingorder}
\end{figure}
\end{center}

For $\sigma$ in $\PQT(\lambda, n)$ or in $\PQT(\lambda)$, and a cell $x=(r,i)$ of $\dg(\lambda)$, we call $\sigma(x)$ the \emph{contents}
of the cell $x$ in the filling $\sigma$. 
For convenience, we also define $\sigma((\lambda_i+1,i))=0$, where $(\lambda_i+1,i)$ is the nonexistent cell above column $i$.
When $r>1$,  define $\South(x)$ to be the cell $(r-1,i)$ directly below cell $x$ in the same column. 

\begin{defn}
\label{def:readingorder}
Define the \emph{reading order} on the cells of a tableau to be along the rows from right to left,
taking the rows from top to bottom. 
\end{defn}

See \cref{fig:readingorder} for an illustration of the reading order.
We now define various statistics defined on fillings of tableaux. 

\begin{defn}\label{def:maj}
Let $\sigma\in\PQT(\lambda)$. Define the \emph{leg} of a cell $(r,i)$ to be the number of cells in column $i$ above row $r$, i.e. $\leg((r,i))= \lambda_i - r$. Define the \emph{major index} of the filling $\sigma$ to be:
\[
\maj(\sigma) = \sum_{x\,:\,\sigma(x)>\sigma(\South(x))} (\leg(x)+1)
\]
\end{defn}

\begin{defn}\label{def:quinv}
Given a diagram $\dg(\lambda)$, a \emph{triple} consists of either
\begin{itemize}
\item three cells $(r+1,i)$, $(r,i)$ and $(r,j)$ with $i<j$; or
\item two cells $(r,i)$ and $(r,j)$ with $i<j$ and  $\lambda_i = r$
(in which case the triple is called a \emph{degenerate triple}).
\end{itemize}
Let us write $a=\sigma((r+1 ,i))$, $b=\sigma((r,i))$, and $c=\sigma((r,j))$, 
for the contents of the cells of the triple, so that we can 
depict a triple along with its contents as
\begin{center}
\qtrip{$a$}{$b$}{$c$}, \qquad or \qquad \begin{tikzpicture}[scale=0.5]
\node at (-0.5,-.5) {$0$};
 \cell{2}{0}{$b$} \cell{2}{2.7}{$c$}
\node at (1,-1.5) {$\cdots$};
\end{tikzpicture}
\end{center}

We say that a triple is a \emph{queue inversion triple},
or a \emph{quinv triple} for short,
if its entries are oriented counterclockwise when the entries are read in increasing order, with ties being broken with respect to reading order. If the triple is degenerate with content $b,c$, it is a $\quinv$ triple if and only if $b<c$. 
(this is equivalent to thinking of a degenerate triple as a regular triple with $a=0$).
\end{defn}

Accordingly, define $\cQ$ to be the set of contents such that 
\begin{equation}
\label{cQdef}
\cQ=\big\{(a,b,c)\in[n]^3:
a<b<c,
\text{ or } b<c<a,
\text{ or } c<b<a,
\text{ or } a=b\ne c \big\}.
\end{equation}
Then the triple $((r+1,i),(r,i),(r,j))$ where $i<j$ forms a quinv triple in $\sigma$ if and only if $(\sigma((r+1,i)),\sigma((r,i)),\sigma((r,j)))\in \cQ$.

\begin{remark} An easy way to check if a triple is a $\quinv$ triple is: if the two entries in the same column are the same but not equal to the third one, or if all three entries are different and increasing in the counterclockwise direction, then the triple is a quinv triple. In all other cases, the triple is not a $\quinv$ triple. For example, the following are $\quinv$ triples: 
\[
\qtrip 123\ ,\quad 
\qtrip 312\ ,\quad 
\qtrip 231\ ,\quad
\qtrip 112\ , \quad
\qtrip 221\ ,
\]
whereas the following are not:
\[
\qtrip 132\ , \quad
\qtrip 121\ , \quad
\qtrip 211\ , \quad
\qtrip 122\ , \quad
\qtrip 111\ .
\]
\end{remark}

 \begin{figure}[ht!]
 \begin{tikzpicture}[scale=0.5]
\cell013\cell022
\cell111\cell123\cell131\cell143\cell153
\cell211\cell221\cell232\cell241\cell252\cell263\cell273
 \end{tikzpicture}
  \centering
  \caption{A tableau of type $\lambda=(3,3,2,2,2,1,1)$ and $n=3$. The weight of this filling is $x_1^5x_2^3x_3^6  q^5 t^{12}$.}\label{fig:polyqueue}
 \end{figure}

\begin{defn}\label{def:stats}
The \emph{weight} of a filling $\sigma$ is $x^{\sigma}t^{\quinv(\sigma)}q^{\maj(\sigma)}$, where:
\begin{itemize}
\item $x^{\sigma}$ is the monomial corresponding to the content of $\sigma$,
\item $\quinv(\sigma)$ is the total number of $\quinv$ triples in $\sigma$,
\item $\maj(\sigma)$ is the major index given in \cref{def:maj}.
 \end{itemize}
 \end{defn}
 
See \cref{fig:polyqueue} for an example of a tableau and its
weight.

The main result we present in this article is the following one, 
which was conjectured by the second author, Corteel, Haglund, Mason, and Williams.
\begin{theorem}
\label{thm:mainconj}
Let $\lambda$ be a partition. The modified Macdonald polynomial can be written as
\[
\widetilde{H}_{\lambda}(X;q,t) = \sum_{\sigma\in\PQT(\lambda)} x^{\sigma}t^{\quinv(\sigma)}q^{\maj(\sigma)}.
\]
\end{theorem}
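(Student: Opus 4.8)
The plan is to characterize $\widetilde{H}_\lambda(X;q,t)$ by the three axioms of Garsia--Haiman (orthogonality/triangularity recast as a symmetry statement plus two specialization/recursion properties) and to verify that the generating function
\[
\widetilde{C}_\lambda(X;q,t) := \sum_{\sigma\in\PQT(\lambda)} x^{\sigma}t^{\quinv(\sigma)}q^{\maj(\sigma)}
\]
satisfies all three. Concretely, following the structure already announced in the introduction, I would (i) prove that $\widetilde{C}_\lambda$ is a symmetric function in $X$; (ii) prove the normalization/coefficient-extraction axiom (call it $(A1)$), something like the statement that the coefficient of a suitable monomial — e.g. $x_1 x_2 \cdots$ corresponding to the ``standard'' content along the reading order — is $1$, which pins down the leading term; (iii) prove the recursive axiom $(A2)$ that relates $\widetilde{C}_\lambda$ at a specialized alphabet (typically $x_n\to$ something, or an inclusion-exclusion in one variable) to $\widetilde{C}_\mu$ for smaller diagrams $\mu$, matching the known recursion for $\widetilde{H}_\lambda$; and (iv) invoke uniqueness to conclude $\widetilde{C}_\lambda = \widetilde{H}_\lambda$. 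The excerpt tells us the third axiom will be ``immediate from our definition,'' so the real work is symmetry plus the two axioms $(A1)$, $(A2)$.

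For symmetry (Section~\ref{sec:sym}), the natural route is to exhibit, for each adjacent transposition $s_i$ acting on the variables, a weight-preserving involution on $\PQT(\lambda)$ that swaps the number of $i$'s and $(i{+}1)$'s in a filling while preserving both $\quinv$ and $\maj$. The standard tool here is a Bender--Knuth-style or ``switching'' involution adapted to the queue-inversion statistic: one restricts attention to cells containing $i$ or $i{+}1$, identifies the ``free'' occurrences (those not paired into a forced configuration by a column neighbor), and toggles them. The key lemma will be that on each row, after removing matched $i,i{+}1$ vertical pairs, the contribution of the remaining free cells to $\quinv$ and to $\maj$ is symmetric under interchanging the roles of $i$ and $i{+}1$; this is where the precise combinatorial definition of a quinv triple (the set $\cQ$) has to be used carefully, because the ``counterclockwise'' orientation condition behaves asymmetrically with respect to which value is larger, and one must check that summing over a whole row restores the symmetry. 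I expect this to be somewhat delicate but tractable, in the spirit of the Haglund--Haiman--Loehr proof of symmetry for the $\inv$ version.

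Axiom $(A1)$ (Section~\ref{sec:A1m}) should follow by identifying which fillings achieve the extremal monomial and checking that among those the statistics $\quinv$ and $\maj$ collapse so that exactly one term survives (coefficient $1$), or more precisely that the relevant coefficient as a polynomial in $q,t$ matches the known value; this is a direct enumeration over a very restricted set of fillings (typically column-strict or row-constant ones) and should be routine once the extremal configuration is pinned down. Axiom $(A2)$ (Sections~\ref{sec:A2m} and \ref{sec:Phi-degenerate}) is the crux: one needs a decomposition of $\PQT(\lambda)$ according to, say, the content of the first column or the value in a distinguished cell, and a bijection showing that the corresponding generating function factors/recurses the way $\widetilde{H}_\lambda$ does. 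The excerpt flags that the nondegenerate case of $(A2)$ goes through more or less directly, but the degenerate case — where the relevant triples become the two-cell degenerate triples with the phantom $a=0$ — is ``the main hurdle,'' handled via a bijection on tableaux (Section~\ref{sec:bijPQT}) which is itself reduced to a bijection on words preserving a coinversion-type statistic (Section~\ref{sec:bijwords}). So the single hardest step I anticipate is constructing and verifying that word bijection: one must produce an explicit, statistic-preserving map on words (presumably matching up $\maj$-like data on one side with $\quinv$/$\coinv$-like data on the other) and check it is a bijection, which is the technical heart of the whole argument. Everything else — assembling the three axioms and quoting uniqueness in Section~\ref{sec:conclusion} — is then bookkeeping.
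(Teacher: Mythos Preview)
Your high-level architecture (symmetry $+$ three characterizing axioms $+$ uniqueness) matches the paper, but you have misidentified what the two non-trivial axioms actually say, and consequently your proposed attacks on them are not viable. The three axioms are: (A1) $\widetilde{H}_\lambda[X(1-t);q,t]$ lies in the span of $\{s_\mu : \mu \geq \lambda'\}$; (A2) $\widetilde{H}_\lambda[X(1-q);q,t]$ lies in the span of $\{s_\mu : \mu \geq \lambda\}$; (A3) $\langle \widetilde{H}_\lambda, s_{(n)}\rangle = 1$. The normalization is (A3), and \emph{that} is the immediate one --- what you described as ``(A1)'' (a coefficient-equals-one statement) is precisely (A3). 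Neither (A1) nor (A2) is a recursion on the shape $\lambda$; both are triangularity conditions after a plethystic substitution.

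The paper's mechanism for (A1) and (A2), rewritten in monomial form, is \emph{superization} in the sense of Haglund--Haiman--Loehr: one extends to super fillings $\sigma:\dg(\lambda)\to\mathbb{Z}_+\cup\mathbb{Z}_-$, expresses $C_\lambda[X(t-1)]$ and $C_\lambda[X(q-1)]$ as signed sums over such fillings, and constructs sign-reversing involutions (negating the entry in a carefully chosen cell) that cancel all monomials $x^\alpha$ except those with $\alpha\leq\lambda$ (resp.\ $\alpha\leq\lambda'$). For (A1) this is a direct adaptation of the HHL argument. For (A2) the involution works unless the distinguished cell sits at the top of its column and hence participates in degenerate triples; your identification of ``degenerate'' with the two-cell phantom-$0$ triples is correct, but the reason it matters is that flipping the sign there can change $\quinv$, breaking the involution --- not anything to do with a recursion. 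The repair is the bijection on tableaux built from the word bijection.

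Two smaller divergences. For symmetry, the paper does not run a Bender--Knuth involution; it groups fillings by descent set, rewrites the $\quinv$ contribution as attacking inversions, and recognizes each piece as an LLT polynomial $G_{\boldsymbol{\nu}}(X;t)$, then invokes symmetry of LLT polynomials. A direct Bender--Knuth argument for $\quinv$ is not supplied and is not obviously available. For the word bijection in the degenerate case, the paper does \emph{not} construct $\phi$ explicitly: it proves the two $\quinv'$-generating functions coincide via $q$-series identities ($q$-Chu--Vandermonde and a telescoping sum), so the required bijection exists by equinumerosity. Finding an explicit such bijection is left open.
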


\noindent\emph{Proof sketch.} We first show that our formula is symmetric in the variables $X$ in \cref{thm:symmetry}. The modified Macdonald polynomials $\widetilde{H}_{\lambda}(X;q,t)$ are symmetric functions that are uniquely characterized by the axioms \eqref{eq:A1}, \eqref{eq:A2}, and \eqref{eq:A3}. Our strategy is to show that our formula satisfies these axioms. Axiom \eqref{eq:A3} is immediate from our definition. Axioms \eqref{eq:A1} and \eqref{eq:A2} are written equivalently as \eqref{A1m} and \eqref{A2m}, respectively. We employ the canonical tool of superization to deal with the negative sign in the plethysm. Thus we introduce ``super fillings'' denoted by $\widetilde{\PQT}(\lambda)$ in \cref{def:super}. The axioms \eqref{A1m} and \eqref{A2m} then become equivalent to \eqref{C1} and \eqref{C2}, respectively. \cref{lem:A1FP} proves \eqref{C1} and \cref{thm:A2FP} combined with \cref{prop:Phi-degenerate} proves \eqref{C2}. See \cref{sec:conclusion} for the formal proof.

\begin{remark}
Note that when $\lambda$ has all parts distinct, one avoids the technical parts of our proof, with \cref{cor:nondegenerate} proving \eqref{C2}, and the proof is greatly simplified. In this case, all our arguments directly follow the proof strategy of \cite[Theorem 2.2]{HHL05}.
\end{remark}

\begin{remark}\label{rem:terminology}
The tableaux which we consider are closely related to the
\emph{multiline queues} mentioned in the introduction, which are used in 
\cite{FM07}, \cite{martin-2020}, \cite{CMW18} and elsewhere. 
We briefly explain the correspondence, and the term ``queue inversion''.
(The details are not required for the rest of the paper.)

Consider a partition $(\lambda_1, \dots, \lambda_k)$ and a filling $\sigma\in\PQT(\lambda,n)$. 
We consider a system composed of a sequence of queues in series,
labelled $\lambda_1, \lambda_1-1, \dots, 1$, 
and with a collection customers labelled $1,2,\dots, k$. 

Customer $i$ enters the system in queue $\lambda_i$, and 
proceeds sequentially to queues $\lambda_i-1, \dots, 1$. 
The entry $\sigma((r,i))$ represents the ``time'' that customer
$i$ is served at queue $r$ and enters queue $r-1$. 
In this sense ``time'' runs from right to left, and wraps 
cyclically around. The time that customer $i$ occupies 
is the (cyclic) interval $[\sigma((r,i)), \sigma((r+1,i))]$. 

Customers have a priority order, with customer $i$ having
higher priority than customer $j$ for $i<j$. Take any $i<j$. If the service time
$\sigma((r,j))$ of customer $j$ at queue $r$ is strictly within the
interval $[\sigma((r,i)), \sigma((r+1,i))]$ during which customer $i$ is present
at that queue, then this contradicts the priority order. 
Such an event corresponds to 
the triple of cells $(r+1,i)$, $(r,i)$, and $(r,j)$ forming a quinv triple. 

Putting $t=0$ corresponds to a case where the priority order is strictly enforced
(and gives positive weight only to tableaux containing no quinv triples). 
\end{remark}

Finally for completeness, we compare the queue inversion statistic
defined above to the inversion statistic used in Haglund, Haiman and Loehr's formula 
(\ref{eq:HHLformula}), which we refer to as the HHL inversion statistic. Whereas the queue inversion statistic 
concerns triples of cells of the form $(r+1, i), (r,i), (r,j)$, with $i<j$
(with a degenerate triple in the case $r=\lambda_i$),
the HHL inversion statistic concerns triples of the form
$(r,i), (r-1,i), (r,j)$ with $i<j$ 
(with a degenerate triple in the case $r=1$),
which we can depict as 
\[
\raisebox{-0.1in}{\qtrip {$a$}{$b$}{$c$}} \quad
\text{and}
\quad
\raisebox{-0.1in}{\begin{tikzpicture}[scale=0.5]
\cell{1}{1}{$a$} \cell{1}{3.7}{$c$} \cell{2}{1}{$b$}
\node at (2,-0.5) {$\cdots$};
\end{tikzpicture}}
\]
respectively. 
A triple of the latter kind is called an inversion triple if\linebreak[4] $(\sigma((r,i)), \sigma((r-1,i)), \sigma((r,j)))\in\cQ$
(with $\sigma(0,i)$ taken to be $\infty$ for all $i$). 
Then $\inv(\sigma)$ is the number of inversion triples in the filling $\sigma$. 

\subsection{Multispecies totally asymmetric zero range process}
\label{sec:mTAZRP}
We consider an interacting particle system which we call the \emph{multispecies totally asymmetric zero range process} (or TAZRP), which appears in \cite{takeyama-2015}, and is a specialization of a much wider class of systems known as multispecies zero range processes;
see for example the 
 review in \cite{kuniba-okado-watanabe-2017}.

Fix a partition $\lambda=(\lambda_1, \dots\, \lambda_k)$ and a positive integer $n$. The system $\TAZRP(\lambda,n)$ has $n$ sites (labelled $1,2,\dots,n$), and $k$ particles with types $\lambda_1,\dots, \lambda_k$. Each site may be empty or may contain one or more particles. Particles of the same type are indistinguishable.

The system evolves as a continuous-time Markov chain. Any jump of the system consists 
of a single particle jumping from site $j$ to site $j-1$, for some $j\in\{1,\dots,n\}$ 
(sites are considered cyclically mod $n$, so that a particle jumping out of site $1$
enters site $n$). The rates are governed by a global parameter $t$ and site-dependent parameters $x_1, \dots, x_n$.

For each $j\in\{1,\dots,n\}$ and each $a\geq 1$, a bell of level $a$ rings at site $j$ at rate
$x_j^{-1} t^{a-1}$. 
When such a bell rings: if site $j$ contains at least $a$ particles, 
then the $a$'th highest-numbered of them jumps to site $j-1$. If $j$ contains fewer
than $a$ particles, nothing changes.

In \cite{AMM2} we show that 
the stationary probabilities of the model are rational functions of 
$t$ and $x_1, \dots, x_n$ with non-negative integer coefficients, with 
common denominator
$\widetilde{H}_{\lambda}(x_1,\ldots,x_n;1,t)$. 
In fact, there is a function $f$ from the set $\PQT(\lambda, n)$
to the set of TAZRP configurations, 
such that the stationary probability $P(\eta)$ of a configuration $\eta$ is given 
by
\begin{equation}\label{TAZRPprobabilities}
P(\eta)=
\frac{1}{
\widetilde{H}_{\lambda}(x_1,\ldots,x_n;1,t)
}
{
\sum_{
\substack{
\sigma\in\PQT(\lambda,n): 
\\
f(\sigma)=\eta
}
}
x^\sigma t^{\quinv(\sigma)}
}.
\end{equation}
In this sense we can say that the functions $\widetilde{H}_{\lambda}$ 
act as  ``partition functions'' for the TAZRP.
The value $f(\sigma)$ depends on the tableau $\sigma$ only through its bottom row. 

The proof of property (\ref{TAZRPprobabilities}) in \cite{AMM2} is done by constructing a Markov chain on the space $\PQT(\lambda, n)$ in which the stationary probability of a tableau $\sigma$ is proportional to $x^\sigma t^{\quinv(\sigma)}$, and which projects via the function $f$ to $\TAZRP(\lambda,n)$.

\section{Background and preliminaries}
\label{sec:background}

The proofs of several of our results will rely on the techniques used in \cite{HHL05}, which we present in \cref{sec:axioms,sec:LLT,sec:superization}. The final subsection \cref{sec:qseries} is needed for the proofs in \cref{sec:bijwords}.

\subsection{The axioms uniquely characterizing $\widetilde{H}_{\lambda}$}\label{sec:axioms}

Let $\Lambda \equiv \Lambda(q,t)$ be the algebra of symmetric functions whose coefficients are rational functions in $q$ and $t$.
Recall that $\{p_{\mu}\}$ is the power sum basis for $\Lambda$, and $\omega$ is the standard involution on $\Lambda$. For a formal power series $A$ in indeterminates $a_1,a_2,\cdots$, in our case with coefficients in $\mathbb{Q}(q,t)$, $p_k[A]$ is the formal substitution of $a_i^k$ for each indeterminate $a_i$. Then for an arbitrary $f\in\Lambda$, the \emph{plethysm} $f[A]$ is defined by expressing $f$ in the power sum basis and substituting $p_k[A]$ for each $p_k$ in the expansion. By convention, we define the \emph{plethystic alphabets} $X=x_1+x_2+\cdots$ and $Y=y_1+y_2+\cdots$, so that $f[X]=f(X)$, $f[-X]=(-1)^d\omega (f(X))$ if $f$ is homogeneous of degree $d$, $f[X+Y]=f(X,Y)$, where $f(X,Y)$ represents the concatenation of the alphabets $X$ and $Y$, and $f[X(1-q)]$ is the image of $f$ under the algebra homomorphism mapping $p_k(X)$ to $(1-q^k)p_k(X)$. See \cite[\S 2]{Hai99} for a complete description.
 
The modified Macdonald polynomials are the basis of $\Lambda$ with coefficients in $\mathbb{Q}(q,t)$, characterized by the following triangularity and normalization axioms (derived from Macdonald's original triangularity and orthogonality axioms \cite{Hai99}), and symmetric functions satisfying the axioms are unique, if they exist:  see \cite[Proposition 2.6]{Hai99} and \cite[Section 6.1]{Haiman02}.

\begin{align}
\widetilde{H}_{\lambda}[X(1-t);q,t]& = \sum_{\mu \geq \lambda'} a_{\mu\lambda}(q,t)s_{\mu}(X), \label{eq:A1}\\
\widetilde{H}_{\lambda}[X(1-q);q,t]& = \sum_{\mu \geq \lambda} b_{\mu\lambda}(q,t)s_{\mu}(X), \label{eq:A2}\\
\left\langle \widetilde{H}_{\lambda}(X;q,t),s_{(n)}(X) \right\rangle  & =1 \label{eq:A3}
\end{align}
for some coefficients $a_{\mu\lambda}(q,t), b_{\mu\lambda}(q,t) \in \mathbb{Q}(q,t)$. 

We first use the following facts to rewrite the axioms in a more convenient form. 
\begin{enumerate}
\item $s_{\lambda}$ and $m_{\lambda}$ are lower triangular with respect to each other: $s_{\lambda}\in\mathbb{Z}\{m_{\mu}\,:\,\mu \leq \lambda\}$ and $m_{\lambda}\in\mathbb{Z}\{s_{\mu}\,:\,\mu \leq \lambda\}$.

\item We can write $f[X(q-1)] = f(qX,-X)$, where $f(X,Y)$ represents the concatenation of the plethystic alphabets $X$ and $Y$.

\item For a symmetric function $f(X)$ that is homogeneous of degree $d$ and a plethystic alphabet $Y$, $f(-Y) = (-1)^d(\omega f)(Y)$.

\item The partial ordering on partitions is reversed by transposing: $\lambda \leq \mu \longleftrightarrow \mu' \leq \lambda'$, where $\leq$ is the dominance order.
\end{enumerate}

Thus we rewrite the two triangularity axioms, which, along with \eqref{eq:A3} uniquely characterize $\widetilde{H}_{\lambda}$, in terms of the monomial basis: 
\begin{align}
\widetilde{H}_{\lambda}[X(t-1);q,t]& = \sum_{\mu \leq \lambda} d_{\mu\lambda}(q,t)m_{\mu}(X) \label{A1m}\\
\widetilde{H}_{\lambda}[X(q-1);q,t]& = \sum_{\mu \leq \lambda'} c_{\mu\lambda}(q,t)m_{\mu}(X), \label{A2m}
\end{align}
for some coefficients $c_{\mu\lambda}(q,t), d_{\mu\lambda}(q,t) \in \mathbb{Q}(q,t)$.

\subsection{LLT polynomials}
\label{sec:LLT}

LLT polynomials are a well-known family of symmetric polynomials discovered by Lascoux, Leclerc, and Thibon \cite{LLT}. We provide the combinatorial definition of LLT polynomials, which was introduced in \cite{LLT2}. 

Recall that the Young diagram of a partition $\lambda$ is a left-justified array of cells such that the $i$'th row contains $\lambda_i$ cells. We will number our rows from bottom to top, the so-called {\em French convention}. 

We will identify a partition with its Young diagram. Let $\lambda$ and $\mu$ be partitions with $\mu_j \leq \lambda_j$ for all $j$, i.e.~$\mu \subseteq \lambda$ as Young diagrams. Then the {\em skew diagram} or {\em skew shape}, denoted $\nu=\lambda / \mu$, is the subset of 
$\mathbb{Z}_+ \times \mathbb{Z}_+$ consisting of the cells in $\lambda/ \mu$. We imagine diagonals running through the cells of $\nu$ from southwest to northeast, 
and we define the \emph{diagonal} of a cell $u=(i,j)$ (in row $i$ and column $j$) to be the integer $d(u)=i-j+1$.

A \emph{semistandard Young tableau} (SSYT) of skew shape $\nu$ is a filling of the diagram $\nu$ with positive integers, denoted by the function $\rho:\nu \rightarrow \mathbb{Z}_+$, which is weakly increasing on each row of $\nu$ (from left to right) and strictly decreasing on each column (from bottom to top). We denote the set of fillings of $\nu$ by $\SSYT(\nu)$. For a filling $\rho\in\SSYT(\nu)$, $\rho(u)$ denotes the entry in cell $u$ of $\nu$.

Let $\boldsymbol{\nu}=(\nu^{(1)},\ldots,\nu^{(k)})$ be a tuple of skew diagrams, and let 
\[
\SSYT(\boldsymbol{\nu})=\SSYT(\nu^{(1)})\times\cdots\times\SSYT(\nu^{(k)}).
\] 
We note that although the representation $\nu=\lambda/\mu$ is not unique for a given diagram $\nu$, we need consider only the \emph{relative positions} of the $\nu^{(i)}$'s with respect to each other. Thus for each $i$, we fix $\nu^{(i)}$ to be such that the site $u_i$ in row 1, column 1 (whether or not there is a cell at that location) is positioned at the origin so that $d(u_i)=1$. 

\begin{defn}\label{def:LLTinversions}
The set of \emph{inversions} on $\boldsymbol{\rho}=(\rho^{(1)},\ldots,\rho^{(k)})\in\SSYT(\boldsymbol{\nu})$ is defined as follows. Let $u$, $v$ be cells in $\nu^{(i)}$ and $\nu^{(j)}$ respectively.
The cells $u$ and $v$ form an inversion if $\rho^{(i)}(u)>\rho^{(j)}(v)$ and either
\begin{enumerate}[label=\roman*.]
\item $i<j$ and $d(u)=d(v)$, or
\item $i>j$ and $d(u)=d(v)+1$.
\end{enumerate}
\end{defn}

We denote the number of inversions of $\boldsymbol{\rho}$ by $\inv(\boldsymbol{\rho})$, and 
the monomial in $x$ corresponding to the content of $\boldsymbol{\rho}$ by
$x^{\boldsymbol{\rho}} = \prod_i \prod_{u\in \nu^{(i)}} x_{\rho^{(i)}(u)}$. 

\begin{example}
\cref{fig:LLT1} shows a semistandard filling $\boldsymbol{\rho}=(\rho^{(1)},\rho^{(2)},\rho^{(3)})$ of the tuple of skew diagrams $\boldsymbol{\nu}$ where $\nu^{(1)}=(1,1)/\emptyset$, $\nu^{(2)}=(1,1)/\emptyset$, and $\nu^{(3)}=(2,2,2)/(2,1)$. There are five inversions in $\boldsymbol{\rho}$: the 3 in diagonal 3 forms an inversion with the 2 in diagonal 2, the 3 and the 2 in diagonal 2 form an inversion, the 2 in diagonal 2 forms an inversion with the bottommost 1 in diagonal 1, and the 4 in diagonal 2 forms an inversion with both 1's in diagonal 1. Thus $\inv(\boldsymbol{\rho})=5$ and $x^{\boldsymbol{\rho}}=x_1^2x_2^2x_3^2x_4$.
\end{example}

\begin{figure}[H]
\centering
\begin{tikzpicture}[node distance=2cm]

\def \d {0.4};
\def \h{0};
\draw[dashed,gray] (\h+3.1+\d,-1.5+\d)--(\h+6.1,1.5);
\node at (\h+3+\d,-1.7+\d) {$3$};
\draw[dashed,gray] (\h+3.1+2*\d,-1.5+\d)--(\h+6.1+\d,1.5);
\node at (\h+3+2*\d,-1.7+\d) {$2$};
\draw[dashed,gray] (\h+3.1+3*\d,-1.5+\d)--(\h+6.1+2*\d,1.5);
\node at (\h+3+3*\d,-1.7+\d) {$1$};
\node at (\h+2.5,-1.7+\d) {$d(u)$};

\node at (\h+4.2+\d,-1+\d) {$\tableau{3\\1}$};
\node at (\h+4+3*\d,-1.+2.5*\d) {$\tableau{2\\1}$};
\node at (\h+4+5*\d,-.8+4.5*\d) {$\tableau{3&4\\&2}$};
\end{tikzpicture}
\caption{For $\boldsymbol{\nu}=((1,1),(1,1),(2,2,2)/(2,1))$, we show a filling $\boldsymbol{\rho}\in\SSYT(\boldsymbol{\nu})$, with $\inv(\boldsymbol{\rho})=5$ and $x^{\boldsymbol{\rho}}=x_1^2x_2^2x_3^2x_4$.}
\label{fig:LLT1}
\end{figure}

\begin{defn}\label{def:LLT}
Let $\boldsymbol{\nu}$ be a tuple of skew diagrams. The LLT polynomial indexed by $\boldsymbol{\nu}$ is
\[
G_{\boldsymbol{\nu}}(X;t) = \sum_{\boldsymbol{\rho}\in\SSYT(\boldsymbol{\nu})} t^{\inv(\boldsymbol{\rho})}x^{\boldsymbol{\rho}}.
\]

\end{defn}
\begin{theorem}[\cite{LLT,LLT2}]\label{thm:LLT} 
The polynomial $G_{\boldsymbol{\nu}}(X;t)$ is symmetric in the variables $x$.
\end{theorem}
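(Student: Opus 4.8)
\emph{Proof plan.} Since the symmetric group on the variables is generated by the adjacent transpositions $s_i=(i,i+1)$, it suffices to show that $G_{\boldsymbol{\nu}}(X;t)$ is unchanged when $x_i$ and $x_{i+1}$ are swapped, for each $i\geq 1$; equivalently, that for every exponent $k$ and every content vector $\alpha$, the number of $\boldsymbol{\rho}\in\SSYT(\boldsymbol{\nu})$ with $x^{\boldsymbol{\rho}}=x^{\alpha}$ and $\inv(\boldsymbol{\rho})=k$ is invariant under interchanging the multiplicities of $i$ and $i+1$ in $\alpha$. The plan is to construct, for each $i$, a weight-preserving involution $\Phi_i$ on $\SSYT(\boldsymbol{\nu})$ — a Bender--Knuth-type involution adapted to the diagonal structure governing LLT inversions — which fixes $\inv$ while interchanging the numbers of cells labelled $i$ and $i+1$.

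To set up $\Phi_i$, I would fix a filling and restrict attention to the cells whose label lies in $\{i,i+1\}$. Column-strictness (strictly decreasing from bottom to top) forces a rigid local structure: in each column of each component $\nu^{(j)}$ there is at most one cell labelled $i$ and at most one labelled $i+1$, and if both occur then the $i$ sits directly above the $i+1$, since no value lies strictly between them. Call such a vertically adjacent pair, occupying two consecutive diagonals $d(\cdot)$ and $d(\cdot)+1$, a \emph{frozen} pair; all remaining cells with label in $\{i,i+1\}$ are \emph{free}. The involution $\Phi_i$ will leave every frozen pair and every cell with label outside $\{i,i+1\}$ untouched, and will only reassign labels among the free cells — which is always legitimate, since a free position admits either of the labels $i$, $i+1$ without violating row- or column-conditions.

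The reassignment is the classical bracketing rule, applied to the free cells ordered suitably. First I would dispose of the inversions that are automatically preserved by any relabelling of free cells within $\{i,i+1\}$: an inversion between a free cell and a cell labelled $\leq i-1$ or $\geq i+2$ depends only on the diagonal condition, not on which of $i$ or $i+1$ the free cell carries; and a short direct computation shows that a frozen pair, since it simultaneously occupies consecutive diagonals carrying the labels $i+1$ and $i$, contributes the same number of inversions with any given free cell no matter what that cell's label is. Hence $\inv$ changes under $\Phi_i$ only through inversions between pairs of free cells. I would then list the free cells in the content reading order of $\boldsymbol{\nu}$ — by diagonal $d(\cdot)$, with component index as tie-breaker — obtaining a word $w$ in the alphabet $\{i,i+1\}$, arrange that the number of free--free inversions is recovered from $w$ by the usual two-letter inversion count, and apply the standard Bender--Knuth involution to $w$: bracket-match each $i$ immediately followed by an $i+1$, remove such matched pairs iteratively, and replace the unmatched factor $i^{p}(i+1)^{q}$ by $i^{q}(i+1)^{p}$. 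Transporting the new word back to the free cells defines $\Phi_i(\boldsymbol{\rho})$; it is then routine to check that $\Phi_i$ is an involution and interchanges the numbers of $i$'s and $(i+1)$'s.

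The main obstacle is the verification that this flip preserves $\inv$ — specifically the count of free--free inversions. This is genuinely more delicate than in the ordinary-tableau case, because an inversion between free cells $u\in\nu^{(a)}$ and $v\in\nu^{(b)}$ is controlled not by sharing a row but by the conditions ``$d(u)=d(v)$ with $a<b$'' and ``$d(v)=d(u)+1$ with $a>b$'' (and their mirror images from \cref{def:LLTinversions}). One must choose the reading order on free cells so precisely that these attack relations become the ``consecutive in $w$'' relations needed for the bracketing argument, and then verify invariance through a case analysis on the relative diagonals and component indices of the cells entering each triple. This bookkeeping is the technical heart of the argument and is essentially the proof of \cite{LLT,LLT2}. (Alternatively, one can give a more representation-theoretic proof by exhibiting crystal-type raising and lowering operators on $\SSYT(\boldsymbol{\nu})$ that preserve $\inv$ and whose string structure witnesses $s_i$-invariance directly.)
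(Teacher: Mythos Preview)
The paper does not prove this theorem; it quotes it from \cite{LLT,LLT2}, and the remark immediately following notes that the original proof in \cite{LLT} goes via the representation theory of affine Hecke algebras while a purely combinatorial proof appears in \cite[Section 10]{HHL05}. Your sketch is of the combinatorial sort --- a Bender--Knuth-type involution on $\SSYT(\boldsymbol{\nu})$ --- so it is aligned with the latter reference rather than with the original argument.

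One small but consequential slip: you take the paper's stated column condition ``strictly decreasing from bottom to top'' at face value and accordingly place the $i$ above the $i{+}1$ in a frozen pair. The paper's own examples (e.g., \cref{fig:LLT1}) show that columns are actually strictly \emph{increasing}; that clause is a typo. This matters because under the decreasing convention your frozen-pair claim is false --- with a frozen pair in component $a$ on diagonals $d,d{+}1$ and a free cell in component $b>a$ on diagonal $d$, labelling the free cell $i$ produces one inversion with the pair while labelling it $i{+}1$ produces none --- whereas under the correct increasing convention the two counts really do agree and your claim holds. Beyond this, you correctly identify the free--free inversion count as the crux and defer it to the literature; just be aware that the bracketing prescription you wrote down does not literally apply, since the LLT attack relation is governed by diagonal equality/adjacency together with component order rather than by adjacency in any single linear reading, and the unmatched subword need not take the shape $i^p(i{+}1)^q$. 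The argument in \cite[Section 10]{HHL05} handles exactly this and is more delicate than a direct transplant of the classical Bender--Knuth move.
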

\begin{remark} Our \cref{thm:symmetry} relies on \cref{thm:LLT}. The original proof for \cref{thm:LLT} in \cite{LLT} uses a construction in the representation theory of affine Hecke algebras, and a purely combinatorial proof of the symmetry is given in \cite[Section 10: Appendix]{HHL05}.
\end{remark}

\begin{defn}\label{def:ribbon}
Define a \emph{ribbon} to be a connected skew diagram with no $2\times2$ squares. For a ribbon $\nu$ with $k=|\nu|$, we label its boxes from northwest to southeast by $1,\ldots,k$, and define its
{\em descent set}, denoted $\Des(\nu)$, to be the set of labels in $\{1,\ldots,k-1\}$ corresponding to boxes that have a box below in the same column.

Let $w$ be a word with entries in $\mathbb{Z}_+$. Denote the set of locations of descents in $w$ by $\Des(w) = \{i\,:\,w_i>w_{i+1}\}$. Let $\nu$ be the ribbon with the same descent set as $w$, i.e. $\Des(\nu)=\Des(w)$. We define a ribbon corresponding to $w$ to be a filling of the cells of $\nu$ with the entry $w_i$ in cell $i$ for $i=1,\ldots,k$. We call $\ribbon(w)$ the ribbon corresponding to $w$. 
\end{defn}

It is immediate from the definition that $\ribbon(w)$ is a SSYT.

\begin{example}
Consider the word $w=(4,3,3,4,5,3,2,1,2,2)$, which has descent set $\Des(w)= \{1,5,6,7\}$. The ribbon $\nu=(6,4,4,4,1)/(3,3,3)$ is the unique ribbon with the same descent set: $\Des(\nu)=\Des(w)$. Below we show $\nu$ with its cells labeled from northwest to southeast, and the corresponding $\SSYT$ $\ribbon(w)$ of shape $\nu$ with its boxes filled by the entries of $w$.

\begin{center}
\begin{tikzpicture}[scale=0.4]
\node at (-4,-3) {$\nu=$};
\cell10{$1$}\cell20{$2$} \cell21{$3$} \cell22{$4$} \cell23{$5$}  \cell33{$6$} \cell43{$7$} \cell53{$8$}  \cell54{$9$}  \cell55{$10$}

\node at (16,-3) {$\ribbon(w)=$};
\cell1{20}{$4$}\cell2{20}{$3$} \cell2{21}{$3$} \cell2{22}{$4$} \cell2{23}{$5$}  \cell3{23}{$3$} \cell4{23}{$2$} \cell5{23}{$1$}  \cell5{24}{$2$}  \cell5{25}{$2$}
\end{tikzpicture}
\end{center}

\end{example}

\subsection{Super-alphabets and quasisymmetric function expansion}\label{sec:superization}

With $\PQT$, $\quinv$, and $\maj$ as defined in \cref{sec:main}, we define:
\begin{equation}\label{eq:C}
C_{\lambda}(X;q,t) = \sum_{\sigma\in \PQT(\lambda)} q^{\maj(\sigma)} t^{\quinv(\sigma)}x^{\sigma}.
\end{equation} 

After showing $C_{\lambda}(X;q,t)$ is symmetric in the variables $x_i$ in \cref{sec:sym}, we will show it satisfies \eqref{A1m} and \eqref{A2m}. We do this by modifying the proof of the HHL formula in \cite[Section 4]{HHL05} for the setting of $\PQT(\lambda)$, where we will consider the \emph{superization} of $C_{\lambda}$. In this subsection, we adapt the well-known properties of superization to this new setting.

Let $n\geq 0$ and fix a subset $D\subseteq \{1,\ldots,n-1\}$. Define \emph{Gessel's quasisymmetric function} $Q_{n,D}(X)$ in the variables $X=x_1,x_2,\ldots$ by 
\[
Q_{n,D}(X) = \sum_{\substack{a_1\leq a_2\leq \cdots \leq a_n\\a_i=a_{i+1} \implies i\not\in D}} x_{a_1}x_{a_2}\cdots x_{a_n},
\]
where the indices are $a_i\in\mathbb{Z}_+$. Define the ``super-alphabet''
\[
\mathcal{A}=\mathbb{Z}_+ \cup \mathbb{Z}_- = \{\bar{1},1,\bar{2},2,\ldots\}
\]
consisting of positive and ``negative'' letters $i,\bar{i}$ of our original alphabet. One can consider any total ordering on $\mathcal{A} \cup \{0\}$; in our proofs we will use the two total orderings
\begin{align*}
(\mathcal{A}\cup\{0\},<_1)& = \{0<1<\bar{1}<2<\bar{2}<\cdots\},\\
(\mathcal{A}\cup\{0\},<_2)& = \{0<1<2<3<\cdots<\bar{3}<\bar{2}<\bar{1}\}.
\end{align*}
For any fixed total ordering $<$ on $\mathcal{A}$, we define the more general ``super'' quasiymmetric function $\widetilde{Q}_{n,D}(X,Y)$ in the variables $X=x_1,x_2,\ldots$ and $Y=y_1,y_2,\ldots$ by
\begin{equation}
\widetilde{Q}_{n,D}(X,Y) = \sum_{\substack{a_1\leq a_2\leq \cdots \leq a_n\\a_i=a_{i+1}\in\mathbb{Z}_+\implies i\not\in D\\a_i=a_{i+1}\in\mathbb{Z}_-\implies i\in D}} z_{a_1}z_{a_2}\cdots z_{a_n},
\end{equation}
where the indices are $a_i \in \mathcal{A}$, and $z_{i}=x_{i}$ when $i\in\mathbb{Z}_+$ and $z_{i}=y_{i}$ when $i\in\mathbb{Z}_-$.

For $a,b \in \mathcal{A}\cup\{0\}$ and any given total ordering $<$, we will use the notation $I(a,b)$: 
\[
I(a,b) = \begin{cases} 1, & a>b \mbox{ or } a=b \in \mathbb{Z}_-,\\
0, & a<b \mbox{ or } a=b \in \mathbb{Z}_+.
\end{cases}
\]

To avoid confusion, we will use the terminology $I_1$ ({\it resp.}~$I_2$) whenever we use the ordering $<_1$ ({\it resp.}~$<_2$).
For example, 
\begin{align*}
I_1(2,3)=I_1(2,\bar 2)=I_1(2, \bar 3)=I_1(2,2)=0, & \quad  
I_1(3,\bar 2)=I_1(\bar 3,\bar 3)=I_1(\bar 3,1)=1, \\ 
I_2(1,2)=I_2(2,\bar 2)=I_2(\bar 2,\bar 1)=I_2(2,2)=0, & \quad 
I_2(\bar 2,4)=I_2(\bar 2,\bar 2)=I_2(\bar 1,\bar 3)=1.
\end{align*}
 
\begin{defn}
The \emph{superization} of a symmetric function $f(X)$ is $\widetilde{f}(X,Y) = \omega_Y f[X+Y]$, where $\omega_Y$ acts on $f(x,y)$ considered as a symmetric function of the $Y$ variables.
\end{defn}

\begin{defn}\label{def:super}
Given a super alphabet $\mathcal{A} = \mathbb{Z}_+ \cup \mathbb{Z}_-$ and a fixed total ordering $<$, a \emph{super filling} of a diagram $\dg(\lambda)$ is a function $\sigma: \dg(\lambda) \rightarrow \mathcal{A}$, with the following extension of the
definitions of the maj and quinv statistics from \cref{def:maj} and \cref{def:quinv}. Denote the set of super fillings of $\dg(\lambda)$ by $\widetilde{\PQT}(\lambda)$.
\begin{itemize}[leftmargin=10pt]
\item $\maj$: If $y = \South(x)$ in $\dg(\lambda)$, then $(x,y) \in\Des(\sigma)$ if $I(\sigma(x),\sigma(y))=1$. The $\maj$ statistic is defined as before.

\item $\quinv$: If three cells $x,y,z$ with entries $\sigma(x)=a,\sigma(y)=b,\sigma(z)=c$ form a triple in the configuration 
\[
\begin{tikzpicture}[scale=.5]
\cellL0{.5}{$a$}{$x$}\cellL1{.5}{$b$}{$y$}\cellL1{3.5}{$c$}{$z$}
\node at (1.3,-.5) {$\cdots$};
\end{tikzpicture}
\] 
where $z$ is the cell to the right of $y$ in the same row, and $y=\South(x)$ if $x$ exists, then the triple is a $\quinv$ triple if and only if exactly \emph{one} of the following is true:
\[
\{I(a,b)=1,\ I(c,b)=0,\ I(a,c)=0\}.
\] 
It is \emph{not a $\quinv$ triple} if and only if exactly \emph{two} of the conditions above are true.\footnote{The reader may check that it is impossible for all or none of the conditions to be true.} 
As before, we write $\quinv(\sigma)$ as the number of $\quinv$ triples in $\sigma$.
\end{itemize}
We write $|\sigma|$ to denote the regular filling with the positive alphabet, such that $|\sigma|(u)= |\sigma(u)|$ for each $u\in \dg(\lambda)$. 
\end{defn}

It is immediate that when $\sigma=|\sigma|$, the above definitions reduce to those of the statistics of a regular filling as given in Section \ref{sec:definitions}. Moreover, note that the definition of $\quinv$ given above still holds for a degenerate triple, in which the cell $x$ does not exist: as per our convention, in that case $a=0$, so $I(a,b)=I(a,c)=0$, and hence the triple is a $\quinv$ triple if and only if $I(c,b)=1$.  

\begin{example}
We give some examples of $\quinv$ triples in super fillings when there are repeated entries, noting that $I(a,a)=0$ and $I(\bar{a},\bar{a})=1$ for any fixed ordering $<$. The following are $\quinv$ triples: 
\[
\qtrip{$\overline{1}$}{$\overline{1}$}{$\overline{1}$}, \ 
\qtrip{$2$}{$\overline{1}$}{$\overline{1}$}, \ 
\qtrip{$1$}{$1$}{$\overline{2}$}, \ 
\qtrip{$\overline{2}$}{$1$}{$\overline{2}$},
\]
whereas the following are not $\quinv$ triples:
\[
\qtrip{$\overline{1}$}{$\overline{1}$}{$2$}, \ \qtrip{$\overline{1}$}{$\overline{1}$}{$\overline{2}$}.
\]
Moreover, \begin{tikzpicture}[scale=0.5]\cell10{$\overline{2}$}\cell20{$\overline{2}$}\end{tikzpicture} is a descent, while (as before) \begin{tikzpicture}[scale=0.5]\cell10{$2$}\cell20{$2$} \end{tikzpicture} is not.
Since $I(a,a)=0$ and $I(\bar{a},\bar{a})=1$ for any ordering $<$, this example is independent of the ordering.
\end{example}

To obtain a tableaux formula for the generating function of the superization $\widetilde{C}(X,Y;q,t)$, we present a standard construction following an analogous argument in \cite{HHL05} that makes use of standard fillings to give a quasisymmetric expansion of our formulas. The following proposition states a well-known property of the superization of a symmetric function that holds for any total ordering on the super-alphabet $\mathcal{A}$. 

\begin{proposition}[{\cite[Corollary 2.4.3]{LLT2}}]\label{prop:superquasi}
Let $f(x)$ be a symmetric function homogeneous of degree $n$, written as a sum over quasisymmetric functions as
\[
f(z) = \sum_{D\subseteq \{1,\ldots,n-1\}} c_D Q_{n,D}(z).
\]
The superization of $f$ is given by
\[
\widetilde{f}(x,y) = \sum_{D\subseteq \{1,\ldots,n-1\}} c_D \widetilde{Q}_{n,D}(x,y).
\]
\end{proposition}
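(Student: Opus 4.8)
The plan is to reduce the statement to a purely linear-algebraic fact about the two function families $\{Q_{n,D}\}$ and $\{\widetilde{Q}_{n,D}\}$ together with the definition of superization as $\widetilde f(x,y)=\omega_Y f[X+Y]$. Concretely, I would first record the standard observation that each Gessel quasisymmetric function $Q_{n,D}$ is itself the generating function for the ``fundamental'' quasisymmetric pieces, and that the map $f\mapsto\widetilde f$ is $\mathbb{Q}(q,t)$-linear in $f$; hence it suffices to establish the proposition on a spanning set, and the cleanest choice is to prove it for $f=Q_{n,D}$ itself, i.e.\ to show $\widetilde{Q_{n,D}}(x,y)=\widetilde{Q}_{n,D}(x,y)$ for every $D\subseteq\{1,\dots,n-1\}$. (Strictly, $Q_{n,D}$ is only quasisymmetric, not symmetric, so one works instead with a symmetric basis such as the $s_\mu$, expands $s_\mu=\sum_D c^\mu_D Q_{n,D}$, and uses linearity of $\omega_Y\circ(\,\cdot\,)[X+Y]$; but the combinatorial heart is the single-$D$ identity.)

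The main computation I would carry out is the evaluation of $Q_{n,D}[X+Y]$ followed by $\omega_Y$. Plethystic substitution of $X+Y$ into a quasisymmetric (or symmetric) function concatenates the alphabets, so $Q_{n,D}[X+Y]$ is a sum over weakly increasing words $a_1\le\cdots\le a_n$ in the \emph{ordered} alphabet $\{x_1<x_2<\cdots<y_1<y_2<\cdots\}$ with the constraint ``$a_i=a_{i+1}\implies i\notin D$''; applying $\omega_Y$ then reverses the roles of the $y$-variables, turning a weak increase among $y$'s into a strict one and vice versa — this is exactly the mechanism that produces the asymmetric condition ``$a_i=a_{i+1}\in\mathbb{Z}_+\implies i\notin D$, $a_i=a_{i+1}\in\mathbb{Z}_-\implies i\in D$'' in the definition of $\widetilde{Q}_{n,D}$. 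The one genuinely delicate point, and the step I expect to be the main obstacle, is keeping the bookkeeping of the total order on the super-alphabet $\mathcal{A}$ consistent through the $\omega_Y$ operation: $\omega_Y$ is literally the involution $e_k\leftrightarrow h_k$ on the $Y$-side, and one must check that its effect on the monomial expansion matches ``reverse the $y$-order and swap the descent condition on equal $y$-letters,'' uniformly for an arbitrary total order on $\mathcal{A}$ (in particular for both $<_1$ and $<_2$ used later). The standard way to handle this is to recall the classical identity $\omega_Y\,Q_{n,D}[X+Y]=\widetilde{Q}_{n,D}(X,Y)$ — this is precisely the content of \cite[Corollary 2.4.3]{LLT2}, cited in the statement — so in fact I would \emph{quote} that corollary for the single-$D$ identity rather than re-derive it, and spend the proof instead on the (short) linearity argument that lifts it from quasisymmetric pieces to an arbitrary symmetric $f$.

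So the key steps, in order, are: (i) note $f\mapsto\omega_Y f[X+Y]$ is $\mathbb{Q}(q,t)$-linear, so it commutes with the finite sum $f=\sum_D c_D Q_{n,D}$; (ii) invoke \cite[Corollary 2.4.3]{LLT2} (or the plethysm-plus-$\omega_Y$ computation sketched above) to get $\omega_Y\,Q_{n,D}[X+Y]=\widetilde{Q}_{n,D}(X,Y)$ for each $D$; (iii) combine to obtain $\widetilde f(x,y)=\sum_D c_D\,\widetilde{Q}_{n,D}(x,y)$, which is the claim. Since the proposition is attributed in the excerpt directly to \cite[Corollary 2.4.3]{LLT2}, the expected ``proof'' here is really just this two-line reduction — there is essentially no obstacle beyond correctly invoking the cited result and noting linearity; the work has been done in the background subsection precisely so that this statement can be used as a black box in the sequel.
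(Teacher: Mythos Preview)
Your proposal is correct, and you have correctly identified the situation: the paper does not prove this proposition at all but simply cites it from \cite[Corollary 2.4.3]{LLT2} and uses it as a black box. Your sketch of the underlying argument (linearity of $f\mapsto\omega_Y f[X+Y]$ together with the single-$D$ identity $\omega_Y\,Q_{n,D}[X+Y]=\widetilde{Q}_{n,D}(X,Y)$) is the standard one and matches what the cited reference does, but none of it appears in the present paper.
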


We call a filling $\pi$ of $\dg(\lambda)$ a \emph{standard filling} if each element in $\{1,\ldots,n\}$ occurs exactly once in $\pi$, where $n=|\lambda|$. A standard filling is represented by the bijection $\pi\ :\ \dg(\lambda) \rightarrow \{1,\ldots,n\}$.
The \emph{standardization} of a super filling $\sigma$ is the unique standard filling $\pi$ such that $\sigma\circ\pi^{-1}$ is weakly increasing, and for each $x\in\mathcal{A}$, the restriction of $\pi$ to $\sigma^{-1}(\{x\})$ is increasing with respect to the reading order if $x\in\mathbb{Z}_+$ and decreasing if $x\in\mathbb{Z}_-$. It is straightforward to check that if $\pi$ is the standardization of $\sigma$, then for each $u,v\in\dg(\lambda)$, $I(\sigma(u),\sigma(v))=I(\pi(u),\pi(v))$, and so the statistics $\maj$ and $\quinv$ are preserved under standardization: $\maj(\sigma)=\maj(\pi)$ and $\quinv(\sigma)=\quinv(\pi)$. 
Note that both the standardization and the function $I$ depend on the choice of ordering; see \cref{example:std}.

\begin{example}\label{example:std}
The standardization of $\sigma$ with respect to the ordering $<_1$ is $\pi_1$, and the standardization with respect to the ordering $<_2$ is $\pi_2$.
\begin{center}
\begin{tikzpicture}[scale=0.5]
\cell002
\cell10{$\bar1$}\cell11{$\bar2$}\cell121
\cell203\cell212\cell22{$\bar1$}\cell231\cell242
\node at (-2,-0.5) {$\sigma=$};
\end{tikzpicture}
\begin{tikzpicture}[scale=0.5]
\cell005
\cell10{4}\cell11{8}\cell121
\cell209\cell217\cell22{3}\cell232\cell246
\node at (-2.5,-0.5) {$,\quad\pi_1=$};
\end{tikzpicture}
\begin{tikzpicture}[scale=0.5]
\cell003
\cell10{9}\cell11{7}\cell121
\cell206\cell215\cell22{8}\cell232\cell244
\node at (-2.5,-0.5) {$,\quad\pi_2=$};
\end{tikzpicture}
\end{center}
\end{example}

Now consider the reading word $\rw(\pi)$ of a standard filling $\pi$ of $\dg(\lambda)$, which is defined to be the sequence of entries obtained from the filling in reading order: this is a permutation of $\{1,\ldots,n\}$ where $n=|\lambda|$. We call $D(\pi)\subseteq\{1,\ldots,n-1\}$ the \emph{index} of $\pi$, defined by
\[
D(\pi)=\big\{i \in \{1,\ldots,n-1\}\ :\ i+1 \text{ precedes } i \text{ in } \rw(\pi)\big\}.
\]
Then $\pi$ is the standardization of $\sigma$ if and only if the weakly increasing function $a=\sigma\circ \pi^{-1}:\{1,\ldots,n\}\rightarrow \mathcal{A}$ satisfies:
\begin{itemize}
\item if $a(i)=a(i+1)\in\mathbb{Z}_+$, then $i\not\in D(\pi)$, and
\item if $a(i)=a(i+1)\in\mathbb{Z}_-$, then $i\in D(\pi)$.
\end{itemize} 
\begin{example}
In \cref{example:std}, $D(\pi_1)=\{3,4,7\}$ and $D(\pi_2)=\{2,6,8\}$. We check that $\sigma\circ\pi_1^{-1}=(1,1,\bar{1},\bar{1},2,2,2,\bar{2},3)$ implies $1,5,6 \not\in D(\pi_1)$ and $3\in D(\pi_1)$ which is indeed true, and similarly $\sigma\circ\pi_2^{-1}=(1,1,2,2,2,3,\bar{2},\bar{1},\bar{1})$ implies $1,3,4\not\in D(\pi_2)$ and $8\in D(\pi_2)$, also true. 
\end{example}

Thus we obtain, by \cref{thm:symmetry} and \cref{prop:superquasi}, the following proposition, corresponding to \cite[Proposition 4.3]{HHL05} with our statistic $\quinv$ replacing the ``$\inv$'' statistic in the latter.

\begin{proposition}[{\cite[Proposition 4.3]{HHL05}}]
Let $\lambda$ be a partition of $n$. The polynomial $C_{\lambda}(x;q,t)$ has the following quasisymmetric expansion as a sum over standard fillings $\pi$ of $\dg(\lambda)$:
\[
C_{\lambda}(x;q,t) = \sum_{\pi} q^{\maj(\pi)}t^{\quinv(\pi)}Q_{n,D(\pi)}(x).
\]
The superization of $C_{\lambda}(x;q,t)$ has the expansion
\[
\widetilde{C}_{\lambda}(x,y;q,t) = \sum_{\pi} q^{\maj(\pi)}t^{\quinv(\pi)}\widetilde{Q}_{n,D(\pi)}(x,y).
\]
This has the following formula in terms of super fillings:
\[
\widetilde{C}_{\lambda}(x,y;q,t) = \sum_{\sigma\in \dg(\lambda) \rightarrow \mathcal{A}} q^{\maj(\sigma)}t^{\quinv(\sigma)}z^{\sigma},
\] 
where $z_i = x_i$ if $i\in\mathbb{Z}_+$ and $z_i = y_i$ if $i\in\mathbb{Z}_-$, and statistics $\quinv$ and $\maj$ on super fillings $\sigma\in \dg(\lambda) \rightarrow \mathcal{A}$ given as in \cref{def:super}. 
\end{proposition}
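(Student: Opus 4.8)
The plan is to follow the standardization argument of \cite[Proposition 4.3]{HHL05} almost verbatim, since the necessary infrastructure --- standard fillings, the index $D(\pi)$, the standardization map, and the biconditional describing which (super) fillings standardize to a given $\pi$ --- has already been assembled above; throughout, one fixes a total ordering $<$ on $\mathcal{A}\cup\{0\}$. The single point genuinely special to our setting is that standardization must be seen to respect $\quinv$ rather than the HHL inversion statistic, so I would begin there. The key observation is that if $\pi$ is the standardization of $\sigma$ then $I(\sigma(u),\sigma(v))=I(\pi(u),\pi(v))$ for every pair of cells $u,v$ of $\dg(\lambda)$, which follows from the tie-breaking rule in the definition of standardization (increasing in reading order on the fibre of a positive letter, decreasing on the fibre of a negative letter) together with the identities $I(a,a)=0$ for $a\in\mathbb{Z}_+$ and $I(\bar a,\bar a)=1$ for $\bar a\in\mathbb{Z}_-$. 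Since $\maj(\sigma)$ depends on $\sigma$ only through the values $I(\sigma(x),\sigma(\South(x)))$, and $\quinv(\sigma)$ only through the values $I$ takes on the three pairs of cells of each triple (with the convention $I(0,b)=0$ absorbing the degenerate triples), this immediately yields $\maj(\sigma)=\maj(\pi)$ and $\quinv(\sigma)=\quinv(\pi)$. This is the only step requiring any case checking, and even it is routine; I do not expect a substantial obstacle anywhere in the proof.

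With that in hand, the first identity follows by partitioning $\PQT(\lambda)$ along the standardization map. For a fixed standard filling $\pi$, the fillings $\sigma\in\PQT(\lambda)$ standardizing to $\pi$ are precisely those of the form $\sigma=a\circ\pi$ with $a:\{1,\dots,n\}\to\mathbb{Z}_+$ weakly increasing and $a(i)=a(i+1)\implies i\notin D(\pi)$, by the biconditional recorded before the statement. Summing $x^\sigma=x_{a_1}\cdots x_{a_n}$ over this fibre is exactly the definition of $Q_{n,D(\pi)}(x)$, while $q^{\maj(\sigma)}t^{\quinv(\sigma)}=q^{\maj(\pi)}t^{\quinv(\pi)}$ is constant on the fibre by the preceding paragraph; summing over $\pi$ gives the first displayed formula.

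For the second identity I would invoke \cref{thm:symmetry} to know that $C_\lambda(x;q,t)$ is symmetric and homogeneous of degree $n=|\lambda|$, hence of the form $\sum_{D\subseteq\{1,\dots,n-1\}}c_D\,Q_{n,D}(x)$ with $c_D=\sum_{\pi\,:\,D(\pi)=D}q^{\maj(\pi)}t^{\quinv(\pi)}$ by part one. Applying \cref{prop:superquasi} and regrouping the sum over $\pi$ according to the value of $D(\pi)$ turns this into $\widetilde{C}_\lambda(x,y;q,t)=\sum_D c_D\,\widetilde{Q}_{n,D}(x,y)=\sum_\pi q^{\maj(\pi)}t^{\quinv(\pi)}\,\widetilde{Q}_{n,D(\pi)}(x,y)$.

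Finally, for the super-filling formula I would expand each $\widetilde{Q}_{n,D(\pi)}(x,y)$ as the sum of the monomials $z_{a_1}\cdots z_{a_n}$ over weakly increasing words $a:\{1,\dots,n\}\to\mathcal{A}$ with $a(i)=a(i+1)\in\mathbb{Z}_+\implies i\notin D(\pi)$ and $a(i)=a(i+1)\in\mathbb{Z}_-\implies i\in D(\pi)$, and then observe that $(\pi,a)\mapsto a\circ\pi$ is a bijection from such pairs onto $\widetilde{\PQT}(\lambda)$: the constraints on $a$ are exactly what makes $\pi$ the standardization of $a\circ\pi$, and conversely every super filling $\sigma$ is recovered uniquely from its standardization $\pi$ together with the word $\sigma\circ\pi^{-1}$. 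Since $z^\sigma=z_{a_1}\cdots z_{a_n}$ and $\maj,\quinv$ are constant on the fibre of each $\pi$ with the values $\maj(\pi),\quinv(\pi)$, summing over all pairs $(\pi,a)$ produces $\widetilde{C}_\lambda(x,y;q,t)=\sum_{\sigma\,:\,\dg(\lambda)\to\mathcal{A}}q^{\maj(\sigma)}t^{\quinv(\sigma)}z^\sigma$, which is the last claim.
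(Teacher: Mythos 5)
Your proposal is correct and follows essentially the same route as the paper, which simply assembles the standardization machinery (reading order, $D(\pi)$, the invariance $I(\sigma(u),\sigma(v))=I(\pi(u),\pi(v))$ implying $\maj$ and $\quinv$ are preserved, and the characterization of the fibre of the standardization map) and then defers to \cite[Proposition 4.3]{HHL05} together with \cref{thm:symmetry} and \cref{prop:superquasi}; you have spelled out the partitioning-by-standardization and the regrouping-by-$D$ arguments that the paper leaves implicit. One cosmetic note: the identity $I(\sigma(u),\sigma(v))=I(\pi(u),\pi(v))$ as stated (both by you and by the paper) is only valid when $u$ precedes $v$ in reading order — for a tied positive pair with $v$ earlier one gets $0\neq 1$ — but since every $I$-pair appearing in the definitions of $\maj$ and $\quinv$ is taken in reading order (upper cell before lower, and righter cell before lefter within a row), the conclusion that standardization preserves both statistics is unaffected.
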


Denote the set of super fillings $\{\sigma\in \dg(\lambda) \rightarrow \mathcal{A}\}$ by $\widetilde{\PQT}(\lambda)$. We use the identies $C_{\lambda}[X(t-1);q,t]=\widetilde{C}_{\lambda}(tX,-X;q,t)$ and $C_{\lambda}[X(q-1);q,t]=\widetilde{C}_{\lambda}(qX,-X;q,t)$ to obtain
\begin{align}
C_{\lambda}[X(t-1);q,t]& = \sum_{\sigma\in \widetilde{\PQT}(\lambda)} (-1)^{m(\sigma)} q^{\maj(\sigma)}t^{p(\sigma)+\quinv(\sigma)}x^{|\sigma|}, \label{C1}\\
C_{\lambda}[X(q-1);q,t]& = \sum_{\sigma\in \widetilde{\PQT}(\lambda)} (-1)^{m(\sigma)} q^{p(\sigma)+\maj(\sigma)}t^{\quinv(\sigma)}x^{|\sigma|} \label{C2}
\end{align}
where $p(\sigma) = |\{u\,:\,\sigma(u)\in\mathbb{Z}_+\}|$ and $m(\sigma) = |\{u\,:\,\sigma(u)\in\mathbb{Z}_-\}|$ are the numbers of positive and negative entries in the super filling $\sigma$, respectively. Note that these formulas are valid for any total ordering chosen on $\mathcal{A}$. 

\subsection{Basic facts from $q$-series}\label{sec:qseries}

We recall some standard facts about $q$-series and combinatorics of words. 
Recall that the {\em $q$-numbers} are given by 
\[
[n] \equiv [n]_q := \frac{1-q^n}{1-q} = 1 + q + \cdots + q^{n-1}, 
\]
for $n \in \mathbb{N}$. For us, $q$ will be a fixed formal variable,
and we will not specify it for notational convenience.
The {\em $q$-factorial} is then
\[
[n]! := [1] [2] \cdots [n],
\]
and the {\em $q$-binomial coefficient} is
\[
\qbinom nk := \frac{[n]!}{[k]! [n-k]!}.
\]
Although it is not obvious that this is a polynomial, this can be seen
from the initial conditions $\qbinom nn = \qbinom n0 = 1$
and the generalized Pascal triangle recurrence
\begin{equation}
\label{qbinom-recur}
\qbinom {n+1}{m+1} = \qbinom nm + q^{m+1} \qbinom n{m+1}.
\end{equation}
The  {\em $q$-multinomial coefficient} is defined similarly. Suppose 
$\alpha = (\alpha_1,\dots, \allowbreak \alpha_n)$ is a tuple of nonnegative integers.
Recall that $|\alpha| = \alpha_1 + \cdots + \alpha_n$. Then
\[
\qbinom{|\alpha|}{\alpha_1,\dots,\alpha_n} := \frac{[|\alpha|]!}{[\alpha_1]! \cdots [\alpha_n]!}.
\]

For $\alpha = (\alpha_1,\dots, \alpha_n)$ a tuple of positive integers, let $W_\alpha$
be the set of words in the alphabet $[n]$ with $\alpha_i$ occurences of the letter $i$,
$1 \leq i \leq n$. For a word $w\in W_{\alpha}$, define $\coinv(w) = |\{(i,j)\ :\ i<j,\ w_i<w_j\}|$ to be the number of coinversions of $w$. The following result is classical.

\begin{proposition}[{\cite[Proposition 1.7.1]{stanley-ec1}}]
\label{prop:coinv-gf}
The $\coinv$ generating function of $W_\alpha$ is
\[
\sum_{w \in W_\alpha} q^{\coinv(w)} = \qbinom{|\alpha|}{\alpha_1,\dots,\alpha_n}.
\]
\end{proposition}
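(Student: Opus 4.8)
The plan is to prove Proposition~\ref{prop:coinv-gf} by induction on $n$, the size of the alphabet, peeling off the largest letter. The base case $n=1$ is immediate: $W_{(\alpha_1)}$ consists of the single word $1^{\alpha_1}$, which has no coinversions, and $\qbinom{\alpha_1}{\alpha_1}=1$ (empty cases $\alpha_i=0$ reduce harmlessly to a smaller alphabet). For the inductive step, fix $\alpha=(\alpha_1,\dots,\alpha_n)$ and write $\alpha'=(\alpha_1,\dots,\alpha_{n-1})$, $N=|\alpha|$, $N'=|\alpha'|=N-\alpha_n$. Given $w\in W_\alpha$, record the positions $1\le p_1<p_2<\cdots<p_{\alpha_n}\le N$ of the $\alpha_n$ occurrences of the letter $n$, and let $w'\in W_{\alpha'}$ be the subword obtained by deleting those occurrences. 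This is a bijection between $W_\alpha$ and the set of pairs (position set, $w'$).

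The key step is the decomposition of the coinversion statistic under this bijection. Since $n$ is the largest letter, every coinversion of $w$ that involves an occurrence of $n$ must have that $n$ sitting in the right-hand (larger-value) slot of the pair; there are no coinversions with an $n$ on the left. The occurrence of $n$ at position $p_k$ then contributes exactly one coinversion for each position to its left holding a strictly smaller letter, and there are precisely $p_k-k$ such positions (of the $p_k-1$ earlier positions, $k-1$ hold an $n$). Coinversions among the remaining positions are exactly the coinversions of $w'$. Hence
\[
\coinv(w)=\coinv(w')+\sum_{k=1}^{\alpha_n}(p_k-k).
\]
The crucial point — essentially the only substantive observation in the argument — is that the second summand depends only on the position set $\{p_1,\dots,p_{\alpha_n}\}$ and not on $w'$, which is what will make the generating function factor.

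Summing the displayed identity over $w\in W_\alpha$ via the bijection gives
\[
\sum_{w\in W_\alpha}q^{\coinv(w)}
=\Bigl(\sum_{w'\in W_{\alpha'}}q^{\coinv(w')}\Bigr)
\Bigl(\sum_{1\le p_1<\cdots<p_{\alpha_n}\le N}q^{\sum_{k}(p_k-k)}\Bigr).
\]
In the second factor, the substitution $q_k=p_k-k$ is a bijection onto tuples with $0\le q_1\le\cdots\le q_{\alpha_n}\le N'$, so that factor is the generating function $\sum q^{q_1+\cdots+q_{\alpha_n}}$ over partitions fitting inside an $\alpha_n\times N'$ box; this is the standard evaluation $\qbinom{N'+\alpha_n}{\alpha_n}=\qbinom{N}{\alpha_n}$, which itself follows by induction from the recurrence \eqref{qbinom-recur}. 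By the inductive hypothesis the first factor equals $\qbinom{N'}{\alpha_1,\dots,\alpha_{n-1}}$, and since $\qbinom{N}{\alpha_1,\dots,\alpha_n}=\qbinom{N}{\alpha_n}\qbinom{N'}{\alpha_1,\dots,\alpha_{n-1}}$ (clear from the factorial definition), the product is $\qbinom{N}{\alpha_1,\dots,\alpha_n}$, completing the induction.

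I do not expect any real obstacle here: the result is classical and the argument is purely combinatorial bookkeeping once one sees the coinv decomposition above. An alternative route is to induct on $N=|\alpha|$ by deleting the \emph{last} letter of the word (the coinversions lost depend only on which letter $j$ it is, namely $\alpha_1+\cdots+\alpha_{j-1}$ of them) and then invoking the $q$-multinomial Pascal identity; the version above is slightly preferable since its only external input is \eqref{qbinom-recur}.
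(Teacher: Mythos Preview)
Your proof is correct. The paper does not actually prove this proposition: it is quoted as a classical result from Stanley~\cite[Proposition 1.7.1]{stanley-ec1}, with only the remark that the cited statement is for the $\inv$ statistic and that the easy bijection $w\mapsto w^{\mathrm{rev}}$ (reversal of the word, which exchanges inversions and coinversions) transfers it to the $\coinv$ version. Your argument therefore supplies strictly more than the paper does; the induction on the alphabet size via deletion of the largest letter is one of the standard self-contained proofs, and your bookkeeping of the $\coinv$ decomposition and the $q$-binomial evaluation of the position-set sum are both correct.
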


Strictly speaking, the result above is usually stated for the $\inv$ generating function for the number of inversions, but there is an easy bijection showing that the same result holds for the coinversion generating function as well.
We now list a few of the standard $q$-series identities that we will need in our proofs. The first is the 
well-known $q$-binomial theorem.

\begin{proposition}[{\cite[Equation (1.87)]{stanley-ec1}}]
\label{prop:qbinom}
\[
\sum_{i=0}^n x^i q^{\binom{i}{2}} \qbinom ni = \prod_{j=0}^{n-1} (1 + x q^j).
\]
\end{proposition}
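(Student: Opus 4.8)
The plan is to prove the stated identity by induction on $n$, using only the $q$-Pascal recurrence \eqref{qbinom-recur} recorded above. The base case $n=0$ is immediate: the left-hand side reduces to the single term $x^{0}q^{\binom{0}{2}}\qbinom{0}{0}=1$, and the right-hand side is the empty product $1$.

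For the inductive step, assuming the formula at level $n$, I would peel off the $j=0$ factor of the product at level $n+1$ and rescale:
\[
\prod_{j=0}^{n}(1+xq^{j})=(1+x)\prod_{j=0}^{n-1}\bigl(1+(xq)q^{j}\bigr)=(1+x)\sum_{i=0}^{n}(xq)^{i}q^{\binom{i}{2}}\qbinom{n}{i},
\]
where the second equality is the inductive hypothesis applied with $x$ replaced by $xq$. Using $\binom{i}{2}+i=\binom{i+1}{2}$, distributing $(1+x)$, and reindexing, the right-hand side becomes
\[
\sum_{i=0}^{n}x^{i}q^{\binom{i+1}{2}}\qbinom{n}{i}+\sum_{i=1}^{n+1}x^{i}q^{\binom{i}{2}}\qbinom{n}{i-1},
\]
in which the coefficient of $x^{i}$ equals $q^{\binom{i}{2}}\bigl(q^{i}\qbinom{n}{i}+\qbinom{n}{i-1}\bigr)=q^{\binom{i}{2}}\qbinom{n+1}{i}$ by \eqref{qbinom-recur} with $m=i-1$. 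This is precisely the coefficient of $x^{i}$ in $\sum_{i=0}^{n+1}x^{i}q^{\binom{i}{2}}\qbinom{n+1}{i}$, closing the induction. I expect no genuine obstacle here: the only point requiring care is the bookkeeping with the quadratic exponents of $q$, together with the observation that the rescaling $x\mapsto xq$ makes \eqref{qbinom-recur} applicable verbatim (with $m=i-1$).

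As an alternative I would consider a direct bijective argument: expanding $\prod_{j=0}^{n-1}(1+xq^{j})$, each monomial corresponds to a subset $S\subseteq\{0,1,\dots,n-1\}$ of factors contributing the term $xq^{j}$, yielding $x^{|S|}q^{\sum_{j\in S}j}$; grouping by $|S|=i$ and factoring out the minimal exponent $0+1+\cdots+(i-1)=\binom{i}{2}$, the coefficient of $x^{i}$ becomes $q^{\binom{i}{2}}$ times the generating function for partitions contained in an $i\times(n-i)$ box, which is $\qbinom{n}{i}$. Only one of the two arguments would appear in the paper; the inductive one is shorter and fully self-contained given \eqref{qbinom-recur}, so that is the version I would write out.
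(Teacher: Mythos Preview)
Your inductive argument is correct, and so is the bijective alternative you sketch. However, the paper does not actually prove this proposition: it is stated in Section~\ref{sec:qseries} as a standard fact from $q$-series, with a citation to Stanley~\cite[Equation (1.87)]{stanley-ec1}, and is then invoked without proof (for instance, at the start of Section~\ref{sec:bijwords} to show that the $\quinv'$ generating function of $W$ is a multiple of~$2$). So there is no ``paper's own proof'' to compare against; your proposal supplies a self-contained argument where the paper is content to quote the literature.
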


The celebrated $q$-Chu-Vandermonde identity will prove very useful for us. 
We write it in the form more tractable for our purposes.

\begin{theorem}[{\cite[Equations (1.5.2) and (1.5.3)]{gasper-rahman-2004}}]
\label{thm:q-chuvan}
\[
\sum_{i=0}^k \qbinom m{k-i} \qbinom ni q^{i(m-k+i)} = \qbinom {m+n}k.
\]
\end{theorem}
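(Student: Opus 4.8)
The plan is to prove the identity bijectively from the word model of \cref{prop:coinv-gf}, using the observation recorded just after that proposition that the same generating-function identity holds with the inversion statistic $\inv(w)=|\{(i,j):i<j,\ w_i>w_j\}|$ in place of $\coinv$. Throughout, a binary word of content $(p,q)$ will mean a word in the alphabet $\{1,2\}$ (with $1<2$) having $p$ letters equal to $1$ and $q$ letters equal to $2$; by \cref{prop:coinv-gf} in its $\inv$ form, $\sum_w q^{\inv(w)}=\qbinom{p+q}{p}$, the sum being over all binary words $w$ of content $(p,q)$.

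First I would reinterpret the right-hand side of the claimed identity: $\qbinom{m+n}{k}=\sum_w q^{\inv(w)}$, where $w$ ranges over binary words of length $m+n$ and content $(k,m+n-k)$. Next I would split each such word as $w=w'w''$, with $w'$ the first $m$ letters and $w''$ the last $n$ letters, and classify each inversion of $w$ according to whether both of its indices lie within $w'$, both lie within $w''$, or the two indices straddle the two blocks. If $w''$ contains exactly $i$ ones, then $w'$ has content $(k-i,\,m-k+i)$ and $w''$ has content $(i,\,n-i)$; a straddling pair $(p,p')$ with $p\le m<p'$ is an inversion precisely when $w_p=2$ and $w_{p'}=1$, so the number of straddling inversions is exactly $(m-k+i)\cdot i$. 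Hence $\inv(w)=\inv(w')+\inv(w'')+i(m-k+i)$.

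Finally I would sum over $w$ by first fixing $i$ and then choosing $w'$ and $w''$ independently, applying \cref{prop:coinv-gf} to each block; the straddling contribution $q^{i(m-k+i)}$ factors out, giving
\[
\qbinom{m+n}{k}=\sum_{i}\qbinom{m}{k-i}\qbinom{n}{i}\,q^{i(m-k+i)},
\]
where the sum may be taken over $0\le i\le k$ since the $q$-binomial factors vanish whenever $k-i>m$ or $i>n$. This is the stated identity.

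I do not expect a substantive obstacle; the one delicate point is the bookkeeping of the straddling term, namely that placing the length-$m$ block first and using $\inv$ rather than $\coinv$ is exactly what makes the straddling exponent emerge as $i(m-k+i)$, rather than an equivalent but differently shaped exponent such as $(k-i)(n-i)$ that a careless choice of split would produce. As a fully self-contained alternative one could instead derive the identity from the $q$-binomial theorem (\cref{prop:qbinom}): factoring $\prod_{j=0}^{m+n-1}(1+xq^j)=\bigl(\prod_{j=0}^{m-1}(1+xq^j)\bigr)\bigl(\prod_{j=0}^{n-1}(1+(xq^m)q^j)\bigr)$, expanding all three products via \cref{prop:qbinom}, and extracting the coefficient of $x^k$, one obtains $q^{\binom{k}{2}}\qbinom{m+n}{k}=\sum_i q^{\binom{k-i}{2}+mi+\binom{i}{2}}\qbinom{m}{k-i}\qbinom{n}{i}$; the elementary identity $\binom{k-i}{2}+mi+\binom{i}{2}=\binom{k}{2}+i(m-k+i)$ then lets one cancel the monomial $q^{\binom{k}{2}}$ and recover the statement.
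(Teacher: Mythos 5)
Your argument is correct, but note that the paper does not prove \cref{thm:q-chuvan} at all: it is quoted as a known identity from Gasper--Rahman, so there is no internal proof to compare against. Your first argument is a clean self-contained derivation from the paper's own toolkit: splitting a binary word of content $(k,m+n-k)$ into its length-$m$ prefix and length-$n$ suffix, the straddling inversions number exactly $i(m-k+i)$ when the suffix carries $i$ ones, and \cref{prop:coinv-gf} (in its $\inv$ form, which is equidistributed with $\coinv$ by word reversal) handles each block; the bookkeeping you flag is indeed the only delicate point, and you have it right, since the $\coinv$ convention or the opposite split would produce the exponent $(k-i)(n-i)$ instead. Your alternative via \cref{prop:qbinom} is also correct: factoring $\prod_{j=0}^{m+n-1}(1+xq^j)$ into the first $m$ factors and the remaining $n$ factors with $x$ replaced by $xq^m$, extracting the coefficient of $x^k$, and using $\binom{k-i}{2}+mi+\binom{i}{2}=\binom{k}{2}+i(m-k+i)$ recovers the statement. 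Either route would serve as a legitimate replacement for the external citation; the combinatorial one has the mild advantage of staying entirely within the word/statistic framework the paper already uses in \cref{sec:bijwords}.
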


The $q$-Chu-Vandermonde identity is also valid when $m,n$ are negative
integers. In that case, a formulation useful for us will be the following.

\begin{cor}
\label{cor:q-dualchu}
\[
\sum_{i=k}^{m-n+k} \qbinom {i}{k} \qbinom {m-i}{n-k} q^{i(n-k+1)} = \qbinom {m+1}{n+1} q^{k(n-k+1)}.
\]
\end{cor}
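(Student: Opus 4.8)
\emph{Proof idea.}\quad The plan is to deduce \cref{cor:q-dualchu} from the $q$-Chu--Vandermonde identity of \cref{thm:q-chuvan}, used now with \emph{negative} integer values of its first two parameters. Only two elementary facts about $q$-binomial coefficients are needed, both immediate from the product expansion $\qbinom{N}{j}=\prod_{i=1}^{j}\frac{1-q^{N-i+1}}{1-q^{i}}$ (valid for every integer $N$): the reflection rule $\qbinom{N}{j}=\qbinom{N}{N-j}$, and the negation rule
\[
\qbinom{-n}{j}=(-1)^{j}\,q^{-nj-\binom{j}{2}}\,\qbinom{n+j-1}{j}.
\]

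I would apply \cref{thm:q-chuvan}, whose upper summation limit I temporarily call $\ell$ (to keep it distinct from the corollary's $k$), with $m,n$ replaced by $-m',-n'$ for integers $m',n'\ge 0$; this is legitimate by the stated validity of $q$-Chu--Vandermonde for negative integer parameters. Rewriting each of the three $q$-binomial coefficients $\qbinom{-m'}{\ell-i}$, $\qbinom{-n'}{i}$, $\qbinom{-m'-n'}{\ell}$ via the negation rule contributes an overall sign $(-1)^{\ell}$ to both sides, which cancels. The exponent of $q$ in the $i$th summand on the left then becomes $-m'(\ell-i)-\binom{\ell-i}{2}-n'i-\binom{i}{2}+i(-m'-\ell+i)$, and here is the only computation with any content: using $\binom{\ell-i}{2}=\binom{i+1}{2}-\ell i+\binom{\ell}{2}$ together with $i(-m'-\ell+i)=-m'i-\ell i+\binom{i+1}{2}+\binom{i}{2}$, every term quadratic in $i$ cancels and the exponent collapses to $-m'\ell-\binom{\ell}{2}-n'i$. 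Pulling out the $i$-independent factor $(-1)^{\ell}q^{-m'\ell-\binom{\ell}{2}}$ from both sides, dividing by it, clearing the leftover negative $q$-power by multiplying through by $q^{n'\ell}$, and substituting $j=\ell-i$, one arrives at
\[
\sum_{j=0}^{\ell} q^{\,n'j}\,\qbinom{m'+j-1}{j}\,\qbinom{n'+\ell-j-1}{\ell-j}=\qbinom{m'+n'+\ell-1}{\ell}.
\]

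It remains to recognise this as the corollary. In the corollary's identity I would replace the summation index $i$ by $i-k$ and divide both sides by $q^{k(n-k+1)}$, turning the claim into $\sum_{j=0}^{m-n}\qbinom{j+k}{k}\qbinom{m-k-j}{n-k}q^{\,j(n-k+1)}=\qbinom{m+1}{n+1}$. This is exactly the displayed intermediate identity upon setting $m'=k+1$, $n'=n-k+1$, $\ell=m-n$, after using the reflection rule three times: $\qbinom{m'+j-1}{j}=\qbinom{k+j}{k}$, $\qbinom{n'+\ell-j-1}{\ell-j}=\qbinom{m-k-j}{n-k}$, and $\qbinom{m'+n'+\ell-1}{\ell}=\qbinom{m+1}{m-n}=\qbinom{m+1}{n+1}$. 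All of this is routine bookkeeping.

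I expect the only genuine obstacle to be the $q$-power accounting in the second paragraph --- tracking the three negation factors and verifying that the quadratic-in-$i$ contributions cancel exactly, leaving the advertised linear exponent and prefactor. As a self-contained alternative that avoids negative parameters altogether, one can prove the corollary directly by induction. Setting $S(N;a,b)=\sum_{j=0}^{N-a-b}\qbinom{j+a}{a}\qbinom{N-a-j}{b}q^{\,j(b+1)}$ (so that the corollary, after the reindexing above, reads $S(m;k,n-k)=\qbinom{m+1}{n+1}$), one splits $\qbinom{j+a}{a}=\qbinom{j+a-1}{a-1}+q^{a}\qbinom{j+a-1}{a}$ using the Pascal recurrence~\eqref{qbinom-recur}; the first piece is $S(N-1;a-1,b)$ and, after reindexing $j\mapsto j+1$ and noting the vanishing boundary term $\qbinom{a-1}{a}=0$, the second piece is $q^{a+b+1}S(N-1;a,b)$, giving $S(N;a,b)=S(N-1;a-1,b)+q^{a+b+1}S(N-1;a,b)$. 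A second application of~\eqref{qbinom-recur}, now to $\qbinom{N}{a+b}+q^{a+b+1}\qbinom{N}{a+b+1}$, closes the induction on $N$, with base cases $N=a+b$ (both sides equal $1$) and $a=0$ (dispatched by a one-line sub-induction on $N$ via $S(N;0,b)=\qbinom{N}{b}+q^{b+1}S(N-1;0,b)$).
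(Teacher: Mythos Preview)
Your proposal is correct and follows exactly the route the paper indicates: the paper simply remarks that \cref{cor:q-dualchu} is the form taken by the $q$-Chu--Vandermonde identity when its parameters are negative integers, and you have carried out precisely that substitution, with the negation rule and the exponent bookkeeping done correctly. Your alternative inductive argument is a bonus not present in the paper.
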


The last is a telescopic sum, which can be derived from the fundamental 
recurrence \eqref{qbinom-recur} for the $q$-binomial coefficients.

\begin{proposition}
\label{prop:q-telescope}
\[
\sum_{i=k}^n \qbinom {m-i}{n-i} q^{i(m-n+1)} = \qbinom {m-k+1}{n-k} q^{k(m-n+1)}.
\]
\end{proposition}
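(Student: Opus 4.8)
The plan is to prove the identity by recognizing the left-hand side as a telescoping sum, using only the $q$-Pascal recurrence \eqref{qbinom-recur}. The first step is to rewrite the summand so that the lower index of the $q$-binomial coefficient no longer depends on the summation variable $i$. Since $\qbinom{a}{b}=\qbinom{a}{a-b}$ for the (polynomial) $q$-binomial coefficients, we have $\qbinom{m-i}{n-i}=\qbinom{m-i}{m-n}$, so writing $p:=m-n$ the summand becomes
\[
S_i:=\qbinom{m-i}{n-i}\,q^{i(m-n+1)}=\qbinom{m-i}{p}\,q^{i(p+1)} .
\]

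Next I would apply \eqref{qbinom-recur} in the equivalent form $\qbinom{A}{B}=\qbinom{A-1}{B-1}+q^{B}\qbinom{A-1}{B}$ (that is, \eqref{qbinom-recur} with $n\mapsto A-1$, $m\mapsto B-1$), taking $A=m-i+1$ and $B=p+1$. This gives
\[
\qbinom{m-i}{p}=\qbinom{m-i+1}{p+1}-q^{p+1}\,\qbinom{m-i}{p+1} .
\]
Multiplying through by $q^{i(p+1)}$ and setting $T_i:=\qbinom{m-i+1}{p+1}\,q^{i(p+1)}=\qbinom{m-i+1}{n-i}\,q^{i(m-n+1)}$, this reads exactly $S_i=T_i-T_{i+1}$. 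Hence
\[
\sum_{i=k}^{n}S_i=\sum_{i=k}^{n}(T_i-T_{i+1})=T_k-T_{n+1},
\]
where $T_k=\qbinom{m-k+1}{n-k}\,q^{k(m-n+1)}$ is precisely the claimed right-hand side, and $T_{n+1}=\qbinom{m-n}{p+1}\,q^{(n+1)(p+1)}=\qbinom{p}{p+1}\,q^{(n+1)(p+1)}=0$ because the lower index $p+1$ exceeds the nonnegative upper index $p$. This finishes the argument.

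There is no substantial obstacle here: the only things to get right are the reindexing that makes the lower index constant, the correct orientation of the Pascal recurrence so that the summand splits as a forward difference $T_i-T_{i+1}$, and the identification of the vanishing boundary term $T_{n+1}$. One should also record that in degenerate ranges (for instance $m<n$) the telescoping collapses in the same way and both sides agree under the standard conventions $\qbinom{a}{b}=0$ for $b<0$ or $0\le a<b$ and $\qbinom{a}{0}=1$, so no separate case analysis is needed.
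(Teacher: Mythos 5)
Your proof is correct and is essentially the paper's own argument: the paper simply asserts that the identity is a telescoping sum derived from the recurrence \eqref{qbinom-recur}, and your rewriting $S_i=T_i-T_{i+1}$ (after using the symmetry $\qbinom{a}{b}=\qbinom{a}{a-b}$ to fix the lower index at $m-n$, equivalently $m-n+1$ for $T_i$) together with the vanishing boundary term $T_{n+1}$ supplies exactly the details the paper leaves to the reader.
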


\section{Proof of \cref{thm:mainconj}: symmetry}
\label{sec:sym}

This section is devoted to proving the following theorem.

\begin{theorem}\label{thm:symmetry}
The polynomial $C_{\lambda}(X; q,t)$ is symmetric in the variables $x_i$.
\end{theorem}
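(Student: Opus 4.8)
The plan is to follow the classical strategy used for the HHL formula in \cite[Section~8]{HHL05}: show that $C_\lambda(X;q,t)$ is invariant under each adjacent transposition $s_r$ swapping the variables $x_r$ and $x_{r+1}$, which suffices since the $s_r$ generate the symmetric group. Since $\maj$ depends only on the relative order of entries in a column (and the positions of the cells, not their values), and $\quinv$ depends only on the relative order of entries within each triple, both statistics are unchanged if we globally relabel the values. The only subtlety in swapping $x_r \leftrightarrow x_{r+1}$ is among cells whose entry is $r$ or $r+1$; so I would fix all cells with entries outside $\{r,r+1\}$ and exhibit a weight-preserving involution on the set of ways to fill the remaining cells with $r$'s and $r+1$'s, matching fillings with $a$ copies of $r$ and $b$ copies of $r+1$ to fillings with $b$ copies of $r$ and $a$ copies of $r+1$, while preserving $q^{\maj}t^{\quinv}$.

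The key step is to reduce this to a statement about LLT polynomials, so that \cref{thm:LLT} can be invoked. Concretely, I would decompose a filling $\sigma$ of $\dg(\lambda)$ according to the descent structure of its columns: the cells where $\sigma(x) > \sigma(\South(x))$ are prescribed, and together with the column structure this organizes the cells into a tuple $\boldsymbol{\nu}$ of ribbons — one ribbon per ``descent-free run'' within each column, arranged on diagonals so that the relevant triples of $\dg(\lambda)$ correspond exactly to the diagonal-adjacency conditions (i) and (ii) of \cref{def:LLTinversions}. Here the reading-order tie-breaking in the definition of a $\quinv$ triple must be matched against the $i<j$ versus $i>j$ dichotomy in the LLT inversion definition; getting the ribbons placed on the correct diagonals, with the correct left-to-right order, so that $\quinv(\sigma)$ becomes $\inv(\boldsymbol{\rho})$ up to an additive constant depending only on the fixed data, is the crux of the argument. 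Once this identification is set up, $C_\lambda(X;q,t)$ is, after grouping fillings by their descent sets, a positive sum of terms $q^{(\text{constant})}\,t^{(\text{constant})}\,G_{\boldsymbol{\nu}}(X;t)$, and each $G_{\boldsymbol{\nu}}$ is symmetric by \cref{thm:LLT}.

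The main obstacle I anticipate is precisely the bookkeeping in this correspondence: verifying that \emph{every} triple contributing to $\quinv$ — including the degenerate triples at the tops of columns, which should be handled by treating the phantom cell as carrying entry $0$ and sitting on the appropriate diagonal — is captured by an LLT inversion, and conversely that no spurious inversions are introduced, all while the $\maj$ contribution ($\sum (\leg(x)+1)$ over descents) is absorbed into the fixed power of $q$. A secondary point to check is that the SSYT column-strictness (strictly decreasing up columns) and row-weak-increase of the ribbons is consistent with our fillings once the value-relabeling within a descent-free run is taken into account; since within such a run entries are weakly increasing bottom-to-top in the column, the ribbon cells are filled compatibly. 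I expect the degenerate-triple case and the diagonal-placement conventions to require the most care, but no deep new idea beyond a faithful adaptation of \cite[Section~8]{HHL05} with $\inv$ replaced by $\quinv$.
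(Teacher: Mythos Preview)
Your strategy coincides with the paper's: group fillings by their descent set $D \subseteq \widehat{\dg}(\lambda)$, identify each piece with an LLT polynomial $G_{\boldsymbol{\nu}}(X;t)$, and invoke \cref{thm:LLT}. Two refinements to your bookkeeping plan are worth flagging. First, the ribbon decomposition in the paper uses \emph{one ribbon per column} of $\dg(\lambda)$, not one per descent-free run: the ribbon $\nu^{(j)}$ has length $\lambda_j$ and its descent set records the descent pattern of column $j$ (via $\Des(\nu^{(j)})=\{\lambda_j-r+1:(r,j)\in D\}$), and the ribbons are listed in \emph{reverse} column order so that the two LLT inversion types (i) and (ii) of \cref{def:LLTinversions} line up exactly with the two attacking-pair configurations. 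Second, rather than matching $\quinv$ directly to the LLT $\inv$, the paper factors through an intermediate statistic $\invtwo(\sigma)$ counting attacking inversions (\cref{def:invtwo}) and proves the identity $\quinv(\sigma)=\invtwo(\sigma)-\armtwo(D)$ by a short case analysis over triples; since $\armtwo(D)$ depends only on $D$, this supplies your ``additive constant depending only on the fixed data'', after which $\invtwo(\sigma)=\inv(\widehat{\LLT}(\sigma))$ is immediate from the definitions. The degenerate triples are absorbed automatically via the convention $\sigma((\lambda_i+1,i))=0$, so no separate treatment is needed.
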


We will prove \cref{thm:symmetry} by expanding $C_{\lambda}$ in terms of the LLT polynomials $G_{\boldsymbol{\nu}}(X;t)$ 
defined in \cref{sec:LLT}.

Let $\lambda$ be a partition with $k$ parts, let $\widehat{\dg}(\lambda)=\{(r,j) \in \dg(\lambda): r>1\}$ 
be the cells in $\dg(\lambda)$ not contained in the bottom row, and let $D\subseteq \widehat{\dg}(\lambda)$ be any subset. We define $\boldnu(\lambda,D)=(\nu^{(1)},\nu^{(2)},\ldots,\nu^{(k)})$ to be a tuple of $k$ ribbons (see \cref{def:ribbon}) such that 
\[
\Des(\nu^{(j)})=\{\lambda_j-r+1\ :\ (r,j)\in D\},
\]
where the ribbons are arranged from northeast to southwest, and such that the southeast-most cell of each ribbon is aligned on diagonal 1. In other words, the $j$'th ribbon has length $\lambda_j$ and its descent set $\Des(\nu^{(j)})$ corresponds to the restriction of $D$ to column $j$ of $\dg(\lambda)$ when read from top to bottom. See \cref{ex:LLT}.

\begin{example}\label{ex:LLT}
Let $\lambda=(3,2,2)$ with the subset of descents chosen to be\linebreak[4] $D=\{(2,3),(2,1)\}$, indicated by the shaded boxes in the figure below. Then $\Des(\nu^{(1)}) = \{1\}$, $\Des(\nu^{(2)})=\emptyset$, and $\Des(\nu^{(3)})=\{2\}$, and $\boldnu(\lambda,D)$ is the tuple of three ribbons shown below. 

\begin{center}
\raisebox{1cm}{
\begin{tikzpicture}[scale=.45]
\node at (-3,-1) {$\dg(\lambda)=$};
\cell00{\ }
\graycell10{\ }\cell11{\ }\graycell12{\ }
\cell20{\ }\cell21{\ }\cell22{\ }
\end{tikzpicture}
}
\qquad\qquad
\begin{tikzpicture}
\def \d {0.4};
\def \h{0};
\def \v{-2};

\node at (\h+3,\v) {$\boldnu(\lambda,D)=$};
\draw[dashed,gray] (\h+3.1+\d,\v-1.5+\d)--(\h+6.1,\v+1.5);
\node at (\h+3+\d,\v-1.7+\d) {$3$};
\draw[dashed,gray] (\h+3.1+2*\d,\v-1.5+\d)--(\h+6.1+\d,\v+1.5);
\node at (\h+3+2*\d,\v-1.7+\d) {$2$};
\draw[dashed,gray] (\h+3.1+3*\d,\v-1.5+\d)--(\h+6.1+2*\d,\v+1.5);
\node at (\h+3+3*\d,\v-1.7+\d) {$1$};
\node at (\h+2.5,\v-1.7+\d) {row};

\node at (\h+4.2+\d,\v-1+\d) {$\tableau{\ \\\ }$};
\node at (\h+4.2+3*\d,\v-1.+.2+2.5*\d) {$\tableau{\ &\ }$};
\node at (\h+4+5*\d,\v-.8+4.5*\d) {$\tableau{\ &\ \\&\ }$};
\end{tikzpicture}
\end{center}
\end{example}

We now refine \eqref{eq:C} by splitting it into fillings with a given descent set.

\begin{defn}\label{def:invtwo}
Let $\lambda$ be a partition and $\sigma\in\PQT(\lambda)$. A pair of cells $u=(r,i)$ and $v=(r',j)$ is said to be \emph{attacking} if 
\begin{enumerate}[label=\roman*.]
\item $r=r'$ and $i>j$, or 
\item $r=r'+1$ and $i<j$,
\end{enumerate}
i.e. in the following configurations:
\begin{center}
\begin{tikzpicture}[scale=0.5]
\cellL{.5}0{}{$v$}\cellL{.5}3{}{$u$}
\node at (1,0) {$\cdots$};
\node at (4.5,0) {or};
\cellL07{}{$u$}\cellL1{10}{}{$v$}
\node at (8,0) {$\ddots$};
\end{tikzpicture}
\end{center}

If $\sigma(u)>\sigma(v)$ for a pair of attacking cells $u,v$, they form an \emph{attacking inversion}. If $\sigma(u)\neq\sigma(v)$ for every pair of attacking cells $u,v\in\dg(\lambda)$, we call $\sigma$ a \emph{non-attacking} filling.

Denote the number of attacking inversions in $\sigma\in\PQT(\lambda)$ by $\invtwo(\sigma)$. For a cell $u=(r,i)\in \dg(\lambda)$, we define $\armtwo(u)$ to be the number of cells $(r-1,j)\in\dg(\lambda)$ such that $j>i$, i.e. the shaded cells belong to $\armtwo$ of the cell labeled $a$ below. 
\begin{center}
\begin{tikzpicture}[scale=.45]
\cell00{\ }\cell01{$a$}\cell02{\ }
\cell10{\ }\cell11{\ }\graycell12{\ }\graycell13{\ }\graycell14{\ }
\cell20{\ }\cell21{\ }\cell22{\ }\cell23{\ }\cell24{\ }\cell25{\ }
\end{tikzpicture}
\end{center}
\end{defn}

\begin{remark} 
The $\,\widehat{\cdot}\,$ symbol will help differentiate $\widehat{\inv}$ and $\widehat{\arm}$ from the traditional definitions of the notions of inversions and arms that appear in the proofs of the corresponding HHL formulas  in \cite[Section 3]{HHL05}. Note that we have called $\invtwo$ ``attacking inversions'' because they correspond to inversions occurring in ``attacking cells'' with respect to the reading order, to parallel the terminology used in \cite{HHL05} when referring to their version of attacking cells and inversions. 
\end{remark}

We will be grouping tableaux by their descent sets, indexed by subsets $D\subseteq \widehat{\dg}(\lambda)$. For each subset $D\subseteq \widehat{\dg}(\lambda)$, define
\begin{equation}\label{eq:F_D}
F_{\lambda,D}(X;t) = \sum_{\substack{\sigma\in\PQT(\lambda)\\ \Des(\sigma)=D}}  t^{\invtwo(\sigma)} x^{\sigma}.
\end{equation}
Now, we rewrite $C_{\lambda}$ in terms of the $F_{\lambda,D}$'s.

\begin{defn}
Let $U\subseteq \dg(\lambda)$ be a subset of cells. Define
\[
\maj(U) = \sum_{u\in U} \leg(u)+1
\]
and
\[
\armtwo(U)=\sum_{u\in U} \armtwo(u).
\]
\end{defn}

\begin{lemma}
\[
C_{\lambda}(X; q,t) = \sum_{D\subseteq \widehat{\dg}(\lambda)} q^{\maj(D)} t^{-\armtwo(D)} F_{\lambda,D}(X;t).
\]
\end{lemma}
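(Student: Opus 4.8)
The goal is to reorganize the sum defining $C_\lambda$ according to the descent set of the filling. Recall that $C_\lambda(X;q,t) = \sum_{\sigma\in\PQT(\lambda)} q^{\maj(\sigma)} t^{\quinv(\sigma)} x^\sigma$. The plan is to partition $\PQT(\lambda)$ according to the value of $\Des(\sigma) \subseteq \widehat{\dg}(\lambda)$, and then within each class $\{\sigma : \Des(\sigma) = D\}$ to show that $\maj(\sigma)$ is the constant $\maj(D)$ and that $\quinv(\sigma) = \invtwo(\sigma) - \armtwo(D)$. Granting those two identities, the class contributes $q^{\maj(D)} t^{-\armtwo(D)} \sum_{\sigma:\Des(\sigma)=D} t^{\invtwo(\sigma)} x^\sigma = q^{\maj(D)} t^{-\armtwo(D)} F_{\lambda,D}(X;t)$, and summing over $D$ gives the claim.

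The first identity, $\maj(\sigma) = \maj(D)$ whenever $\Des(\sigma) = D$, is essentially the definition: $\maj(\sigma) = \sum_{x : \sigma(x) > \sigma(\South(x))} (\leg(x)+1)$, and the set of cells $x$ with $\sigma(x) > \sigma(\South(x))$ is exactly $\{x : (x,\South(x)) \in \Des(\sigma)\} = D$ once we identify $D$ with the corresponding set of upper cells; so $\maj(\sigma) = \sum_{u \in D}(\leg(u)+1) = \maj(D)$. This step is routine.

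The substantive step is the second identity: for every $\sigma$ with $\Des(\sigma) = D$ one has $\quinv(\sigma) + \armtwo(D) = \invtwo(\sigma)$. I would prove this by a cell-by-cell (or rather triple-by-triple) accounting argument comparing the two statistics. Fix a row $r$ and a column pair $i < j$. The $\quinv$ statistic counts the triple $((r+1,i),(r,i),(r,j))$ when its contents lie in $\cQ$ (with the degenerate convention $\sigma((\lambda_i+1,i)) = 0$ when $r = \lambda_i$). The attacking-inversion statistic $\invtwo$ counts, for each pair of attacking cells, whether the reading-order-earlier cell has strictly larger content; attacking pairs are of two shapes — same row with $i > j$, or consecutive rows $(r,i)$ above $(r-1,j)$ with $i<j$. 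The idea is: group the attacking pairs and the relevant descent cells into the same triples $a = \sigma((r+1,i))$, $b = \sigma((r,i))$, $c = \sigma((r,j))$ that define a $\quinv$ triple, and check on a case analysis of the relative order of $a,b,c$ (with ties broken by reading order) that
\[
[\,(a,b,c) \text{ is a } \quinv \text{ triple}\,] + [\,(r,i) \in D\,] = [\,c > b\,] + [\,a > c\,],
\]
where the two terms on the right are the two attacking-inversion indicators associated to this triple — the same-row pair $(r,j),(r,i)$ and the diagonal pair $(r+1,i),(r,j)$. Summing this identity over all triples, the left side yields $\quinv(\sigma)$ plus a count of descent cells; but each non-bottom cell $(r,i) \in D$ is counted once for each $j > i$ with $(r-1,j) \in \dg(\lambda)$, which is exactly $\armtwo((r,i))$, giving $\quinv(\sigma) + \armtwo(D)$. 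The right side, summed, counts every attacking inversion exactly once (each same-row attacking pair appears as the "$c>b$" term of one triple; each diagonal attacking pair appears as the "$a>c$" term of one triple; one must check the boundary/degenerate cases — $r = \lambda_i$, or cells at the ends of rows — are handled correctly by the $a = 0$ and $c = \infty$ conventions). So the right side sums to $\invtwo(\sigma)$, finishing the proof.

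The main obstacle is this last combinatorial bookkeeping: one must set up the triple $\leftrightarrow$ attacking-pair correspondence so that it is a genuine partition (every attacking pair accounted for exactly once, no double counting across rows or at row boundaries), and then verify the pointwise identity above through the case analysis of the six orderings in $\cQ^c$ versus $\cQ$, keeping the reading-order tiebreak consistent with the definition of $I$ and of $\Des$. Once the correspondence is pinned down the case check is mechanical, so I would state the pointwise identity as the key claim and dispatch the cases in a short table.
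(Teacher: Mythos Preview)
Your proposal is correct and follows essentially the same approach as the paper: partition by descent set, note $\maj(\sigma)=\maj(D)$ by definition, and reduce to the identity $\quinv(\sigma)=\invtwo(\sigma)-\armtwo(D)$, which you prove by a triple-by-triple accounting. The paper argues this identity by splitting into the four cases (whether $x\in D$; whether $(a,b,c)\in\cQ$) and checking the contribution to each side, whereas you package the same case analysis into the single pointwise identity
\[
[(a,b,c)\in\cQ] + [a>b] \;=\; [c>b] + [a>c],
\]
which is a slightly cleaner formulation of the same computation. One small notational slip: in your displayed identity you write $[(r,i)\in D]$ for the descent condition on the \emph{upper} cell of the triple, which in your own earlier notation is $(r+1,i)$; be consistent so the subsequent summation matches the definition of $\armtwo$. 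Also, the ``$c=\infty$'' convention you allude to is unnecessary here---the cell $(r,j)$ always exists in a triple, and the degenerate case is fully handled by $a=0$.
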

\begin{proof}
We write \eqref{eq:C} as a sum over descent sets:
\[
C_{\lambda}(X;q,t)=\sum_{D\subseteq \widehat{\dg}(\lambda)} q^{\maj(D)}\sum_{\substack{\sigma\in\PQT(\lambda)\\\Des(\sigma)=D}} t^{\quinv(\sigma)}x^{\sigma}.
\]
Now suppose $\sigma\in\PQT(\lambda)$ with $D:=\Des(\sigma)$. We will show that 
\begin{equation}\label{invtwo}
\quinv(\sigma)=\invtwo(\sigma)-\sum_{u\in D} \armtwo(u).
\end{equation}
Consider a triple $(x,y,z)\in\dg(\lambda)$ with respective contents $a:=\sigma(x)$, $b:=\sigma(y)$, $c:=\sigma(z)$ in the configuration  
\[
\begin{tikzpicture}[scale=.5]
\cellL0{.5}{$a$}{$x$}\cellL1{.5}{$b$}{$y$}\cellL1{3.5}{$c$}{$z$}
\node at (1.3,-.5) {$\cdots$};
\end{tikzpicture}
\ .
\] 
(The triple may be degenerate, in which case $a=0$.)

By comparing to \eqref{cQdef}, $(a,b,c)\in\cQ$ if and only if exactly one of the following is true:
\begin{equation}\label{quinv_conditions}
\left\{\begin{matrix}x \in D,\\(y,z) \text{ do not form an attacking inversion},\\ (z,x) \text{ do not form an attacking inversion}\end{matrix}\right\}.
\end{equation}

First suppose $x \not\in D$. If $(a,b,c)\in\cQ$, then exactly one of $(y,z)$ or $(z,x)$ contributes to $\invtwo(\sigma)$ according to \eqref{quinv_conditions}. If $(a,b,c)\not\in\cQ$, then neither $(y,z)$ or $(z,x)$ contributes to $\invtwo(\sigma)$. The contribution of the triple $(x,y,z)$ to the LHS and RHS of \eqref{invtwo} match in each case. 

Now suppose $x\in D$.  If $(a,b,c)\in\cQ$, then both $(y,z)$ and $(z,x)$ contribute to $\invtwo(\sigma)$, but since $z\in\armtwo(x)$, the total contribution to the RHS in \eqref{invtwo} is 1, which matches the contribution to the LHS. If $(a,b,c)\not\in\cQ$, then exactly one of $(y,z)$ or $(z,x)$ contributes to $\invtwo(\sigma)$, but since $z\in\armtwo(x)$, the total contribution to the RHS in \eqref{invtwo} is zero, again matching the contribution to the LHS.

Every pair $(u,v)$ where $u\in D$ and $v\in\armtwo(u)$, and every attacking pair that contributes to $\invtwo(\sigma)$, corresponds to some triple in $\sigma$. Thus every term in the LHS is accounted for, and \eqref{invtwo} follows.
\end{proof}

We will now describe a weight-preserving map $\widehat{\LLT}$ from $\PQT(\lambda)$ with a fixed descent set $D$ to fillings $\SSYT(\boldnu(\lambda,D))$.

\begin{defn}\label{def:LLTtwo}
For each descent set $D \subseteq \{(r,j) \in \widehat{\dg}(\lambda)\}$, define the map 
\[
\widehat{\LLT}: \{\sigma\in \PQT(\lambda):\Des(\sigma)=D\} \rightarrow \SSYT(\boldnu(\lambda,D))
\] 
as follows. Let $\sigma\in\PQT(\lambda)$, and let $c_1,\ldots,c_k$ be its columns from left to right, where each column contains the entries in $\sigma$ from top to bottom: $c_i = (\sigma(\lambda_i,i),\linebreak[0],\ldots,\sigma(2,i),\sigma(1,i))$. 
Set $\rho^{(i)} = \ribbon(c_i)$. 
Define $\boldsymbol{\rho}=(\rho^{(k)},\rho^{(k-1)},\ldots,\rho^{(1)})$ to be the tuple of ribbons corresponding to the columns in reverse order, such that the last boxes of each ribbon are on the same diagonal, and set $\widehat{\LLT}(\sigma) = \boldsymbol{\rho}$. See \cref{fig:LLT_T} for an example.
\end{defn}

\begin{figure}[H]
\centering
\begin{tikzpicture}[node distance=2cm]
\node at (-1.5,0) {$\sigma=$};
\node at (0,0) {$\tableau{3\\1&2&3\\2&1&1}$};

\def \d {0.4};

\def \h{3};
\node at (\h+3,0) {$\widehat{\LLT}(\sigma)=$};
\draw[dashed,gray] (\h+3.1+\d,-1.5+\d)--(\h+6.1,1.5);
\node at (\h+3+\d,-1.7+\d) {$3$};
\draw[dashed,gray] (\h+3.1+2*\d,-1.5+\d)--(\h+6.1+\d,1.5);
\node at (\h+3+2*\d,-1.7+\d) {$2$};
\draw[dashed,gray] (\h+3.1+3*\d,-1.5+\d)--(\h+6.1+2*\d,1.5);
\node at (\h+3+3*\d,-1.7+\d) {$1$};
\node at (\h+2.5,-1.7+\d) {row};

\node at (\h+4.2+\d,-1+\d) {$\tableau{3\\1}$};
\node at (\h+4+3*\d,-1.+2.5*\d) {$\tableau{2\\1}$};
\node at (\h+4+5*\d,-.8+4.5*\d) {$\tableau{3\\1&2}$};
\end{tikzpicture}
\caption{The filling $\widehat{\LLT}(\sigma)$ corresponding to $\sigma\in\PQT(\lambda)$. The numbering of the diagonals is shown on the bottom. One can check that $\inv(\widehat{\LLT}(\sigma))=\invtwo(\sigma) = 5$. Moreover, the pairs of inversions in $\widehat{\LLT}(\sigma)$
correspond precisely to the attacking inversions in $\sigma$.
}
\label{fig:LLT_T}
\end{figure}

\begin{remark}
The careful reader may observe that the map $\widehat{\LLT}$ is identical to the corresponding map described in the proof of \cite[Proposition 3.4]{HHL05} if the columns of the filling of $\dg(\lambda)$ are taken in reverse order to be mapped to ribbons.
\end{remark}

From the definitions of LLT inversions in \cref{def:LLTinversions} and attacking inversions in fillings of diagrams in \cref{def:invtwo}, the following result is immediate. See \cref{fig:LLT_T} for an example.

\begin{lemma}\label{lem:LLTtwo}
Let $\sigma\in\PQT(\lambda)$ and $\boldsymbol{\rho}=\widehat{\LLT}(\sigma)$. Then
\[
\invtwo(\sigma) = \inv(\boldsymbol{\rho}).
\]
\end{lemma}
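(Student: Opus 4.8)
\emph{Proof proposal.} The plan is to make the bijection between attacking inversions of $\sigma$ and inversions of $\boldsymbol{\rho}=\widehat{\LLT}(\sigma)$ fully explicit, by tracking, for each cell of $\dg(\lambda)$, both its content and the diagonal of the box it is sent to in $\boldsymbol{\rho}$.

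First I would record two ``dictionary entries''. First, \emph{contents are preserved}: by \cref{def:LLTtwo}, $\rho^{(i)}=\ribbon(c_i)$ with $c_i=(\sigma(\lambda_i,i),\ldots,\sigma(1,i))$, so the cell $(r,i)$ of $\dg(\lambda)$ corresponds, under the northwest-to-southeast labelling of ribbon boxes, to the box of $\rho^{(i)}$ labelled $\lambda_i-r+1$, and the two cells carry the same entry. Second, \emph{diagonals are read off from the rows}: in any ribbon, box $\ell+1$ is either immediately to the right of, or immediately below, box $\ell$, and in both cases the diagonal $d(\cdot)$ of box $\ell$ is exactly one more than that of box $\ell+1$; since $\boldnu(\lambda,D)$ is normalized so that the southeast-most box of each ribbon lies on diagonal $1$, box $\ell$ of $\rho^{(i)}$ lies on diagonal $\lambda_i-\ell+1$. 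Combining the two, the cell $(r,i)$ of $\dg(\lambda)$ is sent to a box of $\rho^{(i)}$ lying on diagonal $r$ and carrying the entry $\sigma(r,i)$. (This also re-derives that $\rho^{(i)}$ has exactly the descent set prescribed in the construction of $\boldnu(\lambda,D)$, so that $\widehat{\LLT}$ is well defined.) Finally, since $\widehat{\LLT}(\sigma)$ lists the ribbons in the order $\rho^{(k)},\rho^{(k-1)},\ldots,\rho^{(1)}$, column $i$ of $\dg(\lambda)$ is the $(k-i+1)$-st component of the tuple; so between two columns $i$ and $j$, the one with the larger column index occupies the earlier component.

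With this dictionary in place I would simply unwind \cref{def:LLTinversions}. Take cells $u=(r,i)$ and $v=(r',j)$ of $\dg(\lambda)$, lying in $\boldsymbol{\rho}$ on diagonal $r$ of the column-$i$ ribbon and diagonal $r'$ of the column-$j$ ribbon respectively. If they form an LLT inversion with the column-$i$ ribbon in the earlier component, then $i>j$ and the first alternative of \cref{def:LLTinversions} forces $d(u)=d(v)$, i.e.\ $r=r'$ --- which is exactly the first case of the attacking condition in \cref{def:invtwo}; if the column-$j$ ribbon is in the earlier component, then $i<j$ and the second alternative forces $d(u)=d(v)+1$, i.e.\ $r=r'+1$ --- exactly the second case in \cref{def:invtwo}. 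In both cases the required content inequality $\rho^{(i)}(u)>\rho^{(j)}(v)$ is, by the content-preservation step, the inequality $\sigma(u)>\sigma(v)$, and conversely every attacking pair of $\sigma$ arises in exactly one of these two ways. Hence the attacking inversions of $\sigma$ biject with the inversions of $\boldsymbol{\rho}$, giving $\invtwo(\sigma)=\inv(\boldsymbol{\rho})$; \cref{fig:LLT_T} is a worked instance.

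There is no genuine obstacle here: the entire content is the dictionary above. The one thing that must be handled with care is the bookkeeping of conventions --- the matrix-style (rows counted from the bottom) orientation of $\dg(\lambda)$, the top-to-bottom reading of each column into the word $c_i$, and the reversal of the column order when forming the tuple --- so that everything aligns to the clean statement ``cell $(r,i)$ corresponds to diagonal $r$ of the $(k-i+1)$-st ribbon''.
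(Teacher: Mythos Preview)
Your proposal is correct and is exactly the approach the paper takes: the paper states that the lemma ``is immediate'' from \cref{def:LLTinversions} and \cref{def:invtwo}, and what you have written is precisely the careful unwinding of those definitions. Your key observation---that the cell $(r,i)$ of $\dg(\lambda)$ lands on diagonal $r$ of the $(k-i+1)$-st ribbon, so that the reversal of the column order converts the two attacking conditions into the two LLT inversion conditions---is the whole content of the lemma, and your bookkeeping of the conventions is accurate.
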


Thus we have defined a weight preserving bijection from $\PQT(\lambda)$ with descent set $D \subseteq \{(r,j) \in \widehat{\dg}(\lambda)\}$ to $\SSYT(\boldnu(\lambda,D))$. (The reverse map follows easily from reversing \cref{def:LLTtwo}.) From this we obtain our final key lemma.

\begin{lemma}\label{lem:FG}
Let $\lambda$ be a partition, and let $D \subseteq \{(r,j) \in \widehat{\dg}(\lambda)\}$. Then
\[
F_{\lambda,D}(X;t) = G_{\boldnu(\lambda,D)}(X;t).
\]
\end{lemma}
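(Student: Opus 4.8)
The plan is to combine the two ingredients already assembled: the map $\widehat{\LLT}$ of \cref{def:LLTtwo}, together with \cref{lem:LLTtwo} which records that it preserves the inversion statistics. First I would verify that $\widehat{\LLT}$ is a well-defined \emph{bijection} between the two sets $\{\sigma\in\PQT(\lambda):\Des(\sigma)=D\}$ and $\SSYT(\boldnu(\lambda,D))$. Well-definedness of the forward map requires two checks: (i) that for each column $c_i$ of $\sigma$, the ribbon $\ribbon(c_i)$ has descent set precisely the restriction of $D$ to column $i$ read top-to-bottom, so that the tuple of shapes really is $\boldnu(\lambda,D)$ — this is immediate from the definition of $\ribbon$ (\cref{def:ribbon}), since $\Des(c_i)=\{\lambda_i-r+1:(r,i)\in D\}$ by the hypothesis $\Des(\sigma)=D$; and (ii) that $\ribbon(c_i)$ is indeed a semistandard filling of its ribbon shape, which the excerpt notes right after \cref{def:ribbon}. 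For the inverse map, given $\boldsymbol{\rho}=(\rho^{(k)},\dots,\rho^{(1)})\in\SSYT(\boldnu(\lambda,D))$ one reads the entries of each ribbon $\rho^{(i)}$ in the northwest-to-southeast order to recover the column word $c_i$, hence the filling $\sigma$; one then checks that $\Des(\sigma)=D$, which again follows from the matching of descent sets. These two constructions are visibly mutually inverse, so $\widehat{\LLT}$ is a bijection.

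Next I would observe that this bijection is weight-preserving in the monomial variable: $x^{\sigma}=x^{\widehat{\LLT}(\sigma)}$, which is clear since $\widehat{\LLT}$ merely rearranges the multiset of entries of $\sigma$ into ribbons without changing any values. Combined with \cref{lem:LLTtwo}, which gives $\invtwo(\sigma)=\inv(\widehat{\LLT}(\sigma))=\inv(\boldsymbol{\rho})$, we get that $\widehat{\LLT}$ carries the weight $t^{\invtwo(\sigma)}x^{\sigma}$ to the weight $t^{\inv(\boldsymbol{\rho})}x^{\boldsymbol{\rho}}$. Summing over the two sides of the bijection then yields
\[
F_{\lambda,D}(X;t)=\sum_{\substack{\sigma\in\PQT(\lambda)\\\Des(\sigma)=D}}t^{\invtwo(\sigma)}x^{\sigma}
=\sum_{\boldsymbol{\rho}\in\SSYT(\boldnu(\lambda,D))}t^{\inv(\boldsymbol{\rho})}x^{\boldsymbol{\rho}}
=G_{\boldnu(\lambda,D)}(X;t),
\]
the last equality being the definition of the LLT polynomial (\cref{def:LLT}).

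I expect the only subtle point — really the heart of the lemma, though much of it is discharged by \cref{lem:LLTtwo} already — to be the precise bookkeeping in the diagonal alignment that makes \cref{def:LLTinversions} match \cref{def:invtwo}. Concretely, one needs that when the columns are listed in \emph{reverse} order $\rho^{(k)},\dots,\rho^{(1)}$ with their southeast-most cells all on diagonal $1$, a cell in row $r$ of column $i$ of $\sigma$ lands on diagonal $r$ (independent of $i$), so that two cells in the same row of $\dg(\lambda)$ sit on the same diagonal, and a cell in row $r$ and a cell in row $r-1$ sit on consecutive diagonals; the reversal of column order is exactly what flips the ``$i<j$ vs.\ $i>j$'' condition in \cref{def:invtwo} into the ``$i<j$ with $d(u)=d(v)$ vs.\ $i>j$ with $d(u)=d(v)+1$'' dichotomy of \cref{def:LLTinversions}. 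Since the excerpt tells us this verification is ``immediate'' and illustrated in \cref{fig:LLT_T}, and since it is formally the content of \cref{lem:LLTtwo}, in the write-up I would simply cite \cref{lem:LLTtwo} and the bijectivity of $\widehat{\LLT}$, keeping the proof to a few lines. There is no serious obstacle here; the lemma is a packaging statement whose work was front-loaded into \cref{def:LLTtwo} and \cref{lem:LLTtwo}.
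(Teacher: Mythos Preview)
Your proposal is correct and follows essentially the same approach as the paper: the paper simply states, in the sentence immediately preceding the lemma, that $\widehat{\LLT}$ is a weight-preserving bijection (with inverse obtained by reversing \cref{def:LLTtwo}) and that \cref{lem:LLTtwo} handles the inversion statistic, from which the lemma follows. Your write-up fills in more of the routine details (well-definedness, inverse, monomial preservation), but the argument is the same.
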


\cref{thm:symmetry} follows directly from \cref{thm:LLT} and \cref{lem:FG}.

\section{Proof that $C_{\lambda}(X;q,t)$ satisfies \eqref{A1m}}
\label{sec:A1m}

In this section we will use the ordering $<_1$ on $\mathcal{A}$ and construct a sign-reversing, weight-preserving involution $\Psi$ on super fillings $\widetilde{\PQT}(\lambda)$, which will cancel out all terms involving $x^{\mu}$ if $\mu \leq \lambda'$. 

We begin by defining the following map.

\begin{defn}\label{def:Phi_u} 
Let $u\in\dg(\lambda)$. For $\sigma\in\widetilde{\PQT}(\lambda)$, define the map $\Phi_u$ by
\[
\Phi_u(\sigma(w))=\begin{cases} \sigma(w),& w \neq u,\\
-\sigma(w),& w=u.
\end{cases}
\]
\end{defn} 

In other words, $\Phi_u$ is the map that negates the contents of the cell $u$. 

We first recall \cref{def:invtwo} for attacking cells in our setting. A pair of cells $u=(r,i)$ and $v=(r',j)$ with $i<j$ is attacking if $r=r'$ or $r=r'+1$, i.e. they are either in the same row, or the one to the right is one row below.

\vspace{0.1in}
\begin{defn}
\label{def:attackpair}
Let $\sigma \in\widetilde{\PQT(\lambda)}$. We define $\Psi(\sigma)$ as follows.
\begin{itemize}[leftmargin=10pt]
\item If there is no pair of attacking cells $u,v$ in $\sigma$ such that $|\sigma(u)|=|\sigma(v)|$, then set $\Psi(\sigma)=\sigma$.
\item Otherwise let $a$ be the smallest integer such that $|\sigma(x)|=|\sigma(y)|=a$ for some pair $x,y$ of attacking cells in $\sigma$. Let $v$ be the last cell in reading order among all such attacking pairs, and let $u$ be the last cell in reading order that attacks $v$ and such that $|\sigma(u)|=a$. Then $\Psi(\sigma)=\Phi_u(\sigma)$.
\end{itemize}
\end{defn}

We have defined $\Psi$ such that if $\sigma$ is not a fixed point, the map flips the sign of the entry at a designated cell $u$ that depends only on $|\sigma|$: thus $\Psi(\Psi(\sigma))=\sigma$. The following theorem is our main result in this section.

\begin{theorem}
\label{lem:A1FP}
\begin{align*}
C_{\lambda}[X(t-1);q,t]& = \sum_{\substack{\sigma\in\widetilde{\PQT}(\lambda)\\\sigma\,:\,\Psi(\sigma)=\sigma}} (-1)^{m(\sigma)} q^{\maj(\sigma)}t^{p(\sigma)+\quinv(\sigma)}x^{|\sigma|}\\
&=\sum_{\mu \leq \lambda} c_{\mu\lambda}(q,t)m_{\mu}.
\end{align*}
\end{theorem}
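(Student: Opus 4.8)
The identity to establish is \eqref{C1}, namely that $C_\lambda[X(t-1);q,t]$ has a monomial expansion supported on $\mu\le\lambda$. We already have the super-filling formula for the left-hand side, so the task is to show that in the sum $\sum_{\sigma\in\widetilde{\PQT}(\lambda)}(-1)^{m(\sigma)}q^{\maj(\sigma)}t^{p(\sigma)+\quinv(\sigma)}x^{|\sigma|}$, every fixed point $\sigma$ of $\Psi$ satisfies $|\sigma|$ having content $\mu$ with $\mu\le\lambda$ (in dominance order), and that all non-fixed-points cancel in sign-reversing pairs. The second part is essentially immediate: since $\Psi$ flips exactly one entry at a cell $u$ determined entirely by $|\sigma|$, we have $\Psi$ is an involution, $|\Psi(\sigma)|=|\sigma|$ (so $x^{|\sigma|}$ and $\maj$ are unchanged, as $\maj$ depends only on the $I$-comparisons which are preserved under standardization/sign choices in a way we must check), $p(\Psi(\sigma))+\quinv(\Psi(\sigma))=p(\sigma)+\quinv(\sigma)$, and $m(\Psi(\sigma))=m(\sigma)\pm1$, so the two terms cancel. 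The substantive content is therefore: (a) verify $\Psi$ is genuinely weight-preserving up to the sign $(-1)^{m}$ — i.e. flipping the sign of $\sigma(u)$ changes neither $\maj$ nor $p+\quinv$; and (b) show every fixed point is non-attacking and deduce the dominance bound on its content.

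For step (a): flipping $\sigma(u)$ from $a$ to $\bar a$ (or back) at the chosen cell $u$. Because $u,v$ is the designated attacking pair with $|\sigma(u)|=|\sigma(v)|=a$ and $v$ is later in reading order, we need to check that $I_1(\sigma(u),\sigma(w))$ changes parity in exactly the "right" places. The point is that $I_1(a,c)$ versus $I_1(\bar a, c)$ differ only when $|c|=a$, i.e. $c\in\{a,\bar a\}$; and by the maximality in the choice of $u$ and $v$ (last in reading order), there is a unique other cell, namely $v$, among cells attacking $u$ or attacked by $u$ (equivalently, forming a triple with $u$) with absolute value $a$. A careful bookkeeping — the kind done in \cite[Section 4]{HHL05} — shows that the change in $\quinv$ from the triples through $u$ is exactly compensated by the change in $p$ (one entry changes sign, so $p$ changes by $\mp1$), and that $\maj$ is unaffected because $u$ was chosen so that the relevant descent edge (if any) does not involve the pair $(u,\South(u))$ or $(u,\text{North})$ in a way that changes $I_1$; here one uses that $I_1(a,a)=0=I_1(a,\bar a\,\text{-ordering})$... more precisely that $I_1(\bar a, a) = 1 = I_1(a,a)+1$ but the compensating term appears in $p$. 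I would follow the HHL argument essentially verbatim, substituting $\quinv$-triples for $\inv$-triples; the one-row-offset in the triple shape (cells $(r+1,i),(r,i),(r,j)$ rather than $(r,i),(r-1,i),(r,j)$) does not change the combinatorial essence of the cancellation.

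For step (b): if $\Psi(\sigma)=\sigma$ then $\sigma$ is a non-attacking super filling — no two attacking cells have equal absolute value. In a non-attacking filling of $\dg(\lambda)$, in any row the entries have distinct absolute values, and an entry in column $i$ of row $r$ differs in absolute value from every entry in row $r-1$ sitting in a column $j>i$. Standard reasoning (as in \cite{HHL05}) then shows that reading the diagram column by column one can bound how many cells can carry each absolute value: the multiplicities of the content $\mu$ of $|\sigma|$ are dominated by $\lambda$. Concretely, the cells with $|\sigma|$-value in $\{1,\dots,\text{something}\}$ cannot be too numerous because the non-attacking condition forces them to spread across columns much as the cells of $\dg(\lambda)$ are arranged, yielding $\mu_1+\cdots+\mu_j\le\lambda_1+\cdots+\lambda_j$ for all $j$ after sorting — that is, $\mu\le\lambda$. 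Combining (a) and (b) gives that the surviving terms all have $x^\mu$ with $\mu\le\lambda$, which is precisely \eqref{A1m} with $d_{\mu\lambda}(q,t)=\sum_{\sigma\text{ fixed},\,|\sigma|\text{ has content }\mu}(-1)^{m(\sigma)}q^{\maj(\sigma)}t^{p(\sigma)+\quinv(\sigma)}$.

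**Main obstacle.** The delicate point is step (a): verifying that flipping a single sign at the cell $u$ chosen by the rule in \cref{def:attackpair} preserves $q^{\maj}t^{p+\quinv}$ exactly. One must check that every triple containing $u$ either is unaffected or has its quinv-status toggled in a way accounted for by the change in $p$, and simultaneously that no $\maj$-descent edge incident to $u$ changes its $I_1$-value — this relies crucially on the "last in reading order" choices of both $v$ and $u$, which guarantee that $v$ is the *only* cell in an attacking relation with $u$ having absolute value $a$ that lies later in the reading order, and that all cells earlier than $u$ attacking it with value $a$ would have been chosen instead, a contradiction. Getting this casework airtight (distinguishing whether $u$ has a cell above it, whether $v$ is in $u$'s row or the row below, and the sign of $\sigma(v)$) is the real work; everything else is formal.
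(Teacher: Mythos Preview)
Your proposal is correct and follows essentially the same approach as the paper: cancel non-fixed-points of $\Psi$ via the sign-reversing involution, then argue the fixed points (non-attacking super fillings) have content dominated by $\lambda$.

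One point of confusion to clean up: you write that ``$I_1(\bar a,a)=1=I_1(a,a)+1$ but the compensating term appears in $p$'', suggesting a possible trade-off between $\maj$ and $p$. There is none; $\maj$ is preserved \emph{exactly}. The two facts you need are (i) $I_1(b,a)=I_1(b,\bar a)$ for \emph{every} $b\in\mathcal A$ (so the cell above $u$ never changes its descent status, regardless of its content), and (ii) the cell $\South(u)$, if it exists, has absolute value $\ne a$ (since $\South(u)$ attacks $v$ and comes after $u$ in reading order, contradicting the choice of $u$), so $I_1(a,\sigma(\South(u)))=I_1(\bar a,\sigma(\South(u)))$. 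Thus both descent edges at $u$ are unaffected. The compensation you are thinking of is purely between $\quinv$ and $p$: exactly one triple through $u$---the unique one also containing $v$---flips its quinv status, and this $\pm1$ matches the $\mp1$ change in $p$. Your ``main obstacle'' paragraph correctly identifies that the case analysis ruling out any \emph{other} cell $y$ with $|\sigma(y)|=a$ in a triple with $u$ is where the work lies; the paper handles this by listing the possible relative positions of $u$, $v$, and such a hypothetical $y$ and deriving a contradiction with the ``last in reading order'' choice in each.
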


The proof of the theorem above will follow from the next two lemmas.

\begin{lemma}\label{lem:A1maj}
For any $\sigma\in\widetilde{\PQT}(\lambda)$, we have $\maj(\Psi(\sigma))=\maj(\sigma)$.
\end{lemma}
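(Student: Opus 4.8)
\textbf{Proof proposal for Lemma~\ref{lem:A1maj}.}

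The plan is to show that negating the content of the single cell $u$ selected by $\Psi$ does not change the descent set $\Des(\sigma)$, from which equality of the major index is immediate since $\maj$ depends only on the descent set (via $\maj(\sigma) = \sum_{x \in \Des(\sigma)}(\leg(x)+1)$ in the super-filling sense of \cref{def:super}). If $\sigma$ is a fixed point of $\Psi$ there is nothing to prove, so assume $\Psi(\sigma) = \Phi_u(\sigma)$ where $u$ and the attacking partner $v$ are chosen as in \cref{def:attackpair}, with $|\sigma(u)| = |\sigma(v)| = a$.

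The descent contribution of a cell $x$ depends only on $I(\sigma(x), \sigma(\South(x)))$, so changing $\sigma(u)$ can only affect the membership in $\Des(\sigma)$ of the two cells $u$ and $\North(u)$ (the cell directly above $u$, if it exists), through the pairs $(u, \South(u))$ and $(\North(u), u)$. The key point is that in each of these vertical pairs, the other cell $w$ (namely $\South(u)$ or $\North(u)$) is attacking $u$ — being in the same column, it is either in the same row (impossible) or exactly one row above/below, hence forms an attacking pair with $u$ together with any shift; more precisely $u$ and $\South(u)$ are in adjacent rows of the \emph{same} column, and I must check that $|\sigma(w)| = a$ is ruled out. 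Here is where the minimality and last-in-reading-order choices in \cref{def:attackpair} do the work: a cell in the same column directly below (or above) $u$ is attacking with $u$, so if it had absolute value $a$ it would give an attacking pair with the same value $a$; one then checks that the reading-order extremality of the pair $(u,v)$ forces a contradiction, so $|\sigma(\South(u))| \neq a$ and $|\sigma(\North(u))| \neq a$. Since negation only changes $\sigma(u)$ from $a$ to $\bar a$ or vice versa (in absolute value it is unchanged), and since for $b$ with $|b| \neq a$ one has $I(a, b) = I(\bar a, b)$ and $I(b,a) = I(b, \bar a)$ under the ordering $<_1$ (the relative order of $a$ and $\bar a$ against any letter of different absolute value is the same), the values $I(\sigma(u), \sigma(\South(u)))$ and $I(\sigma(\North(u)), \sigma(u))$ are unchanged. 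Hence $\Des(\Phi_u(\sigma)) = \Des(\sigma)$ and therefore $\maj(\Phi_u(\sigma)) = \maj(\sigma)$.

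The main obstacle is the careful verification that $|\sigma(\South(u))| \neq a$ and $|\sigma(\North(u))| \neq a$, i.e., that the vertical neighbours of $u$ do not themselves have absolute value $a$. This requires unwinding precisely how $u$ was chosen — as the last cell in reading order attacking $v$ with absolute value $a$, where $v$ itself is the last cell in reading order lying in such an attacking pair — and arguing that a vertical neighbour of $u$ with value $a$ would produce an attacking pair contradicting one of these extremality conditions. (One should note the asymmetry: $\South(u)$ comes later in reading order than $u$, while $\North(u)$ comes earlier, so the two cases use the extremality of $u$ and of $v$ respectively.) Everything else is a routine check that $I(\cdot,\cdot)$ under $<_1$ is insensitive to the bar on a letter when compared against letters of other absolute values.
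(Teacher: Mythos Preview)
Your overall approach—show that $\Des(\sigma)$ is unchanged by $\Phi_u$ by analysing only the two vertical pairs $(u,\South(u))$ and $(\North(u),u)$—is exactly the paper's. However, your justification of the key facts contains two errors.

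First, cells in the same column are \emph{never} attacking: by \cref{def:invtwo} the two cells must be in distinct columns. So your claim that ``a cell in the same column directly below (or above) $u$ is attacking with $u$'' is simply false, and cannot be the reason that $|\sigma(\South(u))|\neq a$. The correct argument (this is \cref{prop:attackpair}(1)) is that $\South(u)$ attacks $v$, not $u$: in either configuration of the pair $(u,v)$ one checks that $\South(u)$ and $v$ form an attacking pair, and since $\South(u)$ comes after $u$ in reading order, the choice of $u$ as the \emph{last} cell attacking $v$ with absolute value $a$ forces $|\sigma(\South(u))|\neq a$.

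Second, and more seriously, your claim that $|\sigma(\North(u))|\neq a$ is not only unproved but in general \emph{false}. For example, take $\lambda=(3,1)$ with $\sigma((3,1))=\sigma((2,1))=\sigma((1,2))=1$ and $\sigma((1,1))=2$; then $a=1$, $v=(1,2)$, $u=(2,1)$, and $\North(u)=(3,1)$ has content $1=a$. Fortunately you do not need this claim. The identity $I_1(b,a)=I_1(b,\bar a)$ holds for \emph{every} $b\in\mathcal{A}$ (check the four cases $b=a,\bar a$ and $|b|\neq a$ directly under $<_1$); this is \cref{prop:attackpair}(2). Hence $I_1(\sigma(\North(u)),\sigma(u))$ is unchanged regardless of the value of $\sigma(\North(u))$, and the descent status of $\North(u)$ is preserved automatically. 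Only the pair $(u,\South(u))$ genuinely needs the condition $|\sigma(\South(u))|\neq a$, via \cref{prop:attackpair}(3).
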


\begin{lemma}\label{lem:A1quinv}
If $\sigma$ is not a fixed point of $\Psi$ so that $\Psi(\sigma)=\Phi_u(\sigma)$ for some cell $u$, then $\Psi=\Phi_u$ changes the number of $\quinv$ triples by exactly one:
\[
\quinv(\Phi_u(\sigma))=\quinv(\sigma)+\begin{cases} -1,&\mbox{if}\ \sigma(u)\in\mathbb{Z}_+,\\ 1,&\mbox{if}\ \sigma(u)\in\mathbb{Z}_-.\\\end{cases}
\]
\end{lemma}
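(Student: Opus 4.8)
\noindent\emph{Proof proposal.}
The plan is to use the ordering $<_1$ throughout and to determine \emph{exactly which} $\quinv$-triples change status when the single entry $\sigma(u)$ is negated. Write $a=|\sigma(u)|$. The key starting observation is that $a$ and $\bar a$ are consecutive in $<_1$, so negating $\sigma(u)$ (which toggles it between $a$ and $\bar a$) can affect a comparison $I(\cdot,\cdot)$ only when $\sigma(u)$ is compared with a letter of absolute value $a$; inspecting the definition of $I_1$ shows more precisely that $I(\sigma(u),w)$ changes exactly when $|w|=a$, whereas $I(w,\sigma(u))$ never changes. Feeding this into the $\quinv$ rule of \cref{def:super}, one sees: a triple containing $u$ can change its $\quinv$-status only if $u$ is its top cell or its bottom-right cell (if $u$ is the bottom-left cell, the two comparisons involving $\sigma(u)$ are both of the ``unaffected'' kind), and in that case exactly one of the three defining conditions toggles, so its $\quinv$-status flips.

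The one external fact I would borrow is $|\sigma(\South(u))|\neq a$ (this is also what makes \cref{lem:A1maj} work). It follows because, in each of the two ways $u$ can attack $v$, one checks directly that $v$ attacks $\South(u)$; since an attacking cell always precedes the cell it attacks in reading order, $|\sigma(\South(u))|=a$ would exhibit $\{v,\South(u)\}$ as an attacking pair of value $a$ whose later cell $\South(u)$ comes after $v$ --- impossible, as $v$ is the \emph{last} cell lying in an attacking pair of value $a$.

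The heart of the proof is to show that exactly one triple through $u$ changes. There is a unique triple $T$ containing both $u$ and $v$, and it is read off from the position of $v$ relative to $u$: if $u$ and $v$ lie in the same row, $T$ has $v$ as bottom-left cell, $u$ as bottom-right cell, and the cell directly above $v$ in its column (or $0$, if absent) as top cell; if $u$ lies one row above $v$, then $T$ has $u$ as top cell, $\South(u)$ as bottom-left cell, and $v$ as bottom-right cell. In both cases $|\sigma(v)|=a$ and $|\sigma(\South(u))|\neq a$, so by the first paragraph $T$ does change status. For any \emph{other} triple through $u$ to change status, the first paragraph forces a third cell $w$ with $|\sigma(w)|=a$ that is attacked by $u$, hence follows $u$ in reading order; a short case analysis of the possible positions of $w$ then shows that either $w$ follows $v$ in reading order --- contradicting the choice of $v$ --- or $w$ attacks $v$ --- contradicting the choice of $u$ as the last cell attacking $v$ with absolute value $a$. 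This exhaustive check of the finitely many positions of $w$ is the step I expect to be the main obstacle.

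Finally I would fix the sign. In $T$, whether $u$ is the top cell or the bottom-right cell, the unique condition that toggles is ``$I(\sigma(u),\sigma(v))=0$''; since $|\sigma(u)|=|\sigma(v)|=a$, the explicit values of $I_1$ show this condition holds, before applying $\Phi_u$, exactly when $\sigma(u)\in\mathbb{Z}_+$. As $\Phi_u$ flips this condition, and as among the three defining conditions of any triple the number that hold is always $1$ or $2$ (the footnote to \cref{def:super}), one deduces the $\quinv$-status of $T$ before and after $\Phi_u$, and hence that $\quinv$ changes by exactly one in the stated direction. Since $T$ is the only triple affected, this completes the proof.
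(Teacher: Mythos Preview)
Your proposal is correct and follows essentially the same route as the paper. Both arguments use that under $<_1$ one has $I_1(w,a)=I_1(w,\bar a)$ for all $w$ while $I_1(a,w)=I_1(\bar a,w)$ unless $|w|=a$, together with $|\sigma(\South(u))|\neq a$, to reduce to triples in which $u$ is the top or bottom-right cell and the remaining cell of absolute value $a$ must be $v$. Your concluding dichotomy for the uniqueness step --- either the offending cell $w$ follows $v$ in reading order (contradicting the choice of $v$), or $w$ lies strictly between $u$ and $v$ and then attacks $v$ (contradicting the choice of $u$) --- is exactly what the paper's explicit enumeration of six $u,v,y$ configurations is checking.

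Two small remarks. First, your first-paragraph assertion that ``exactly one of the three defining conditions toggles'' when $u$ is the top cell is premature as stated: the top cell appears in \emph{two} conditions of the form $I(\sigma(u),\cdot)$, and it is only after invoking $|\sigma(\South(u))|\neq a$ (your second paragraph) that one of them is fixed. Second, your sign computation in fact shows that when $\sigma(u)\in\mathbb Z_+$ the triple $T$ passes from non-quinv to quinv, so $\quinv$ \emph{increases} by one --- the opposite of the displayed formula in the lemma. This is a typo in the lemma statement; the paper's own proof ends with ``$\quinv(\Phi_u(\sigma))=\quinv(\sigma)+1$'' in the $\mathbb Z_+$ case, and the application in the proof of \cref{lem:A1FP} (where $p(\sigma)+\quinv(\sigma)$ must be invariant) requires this sign.
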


Since $u$ precedes $v$ in reading order in the last attacking pair with entries equal to $a$ in absolute value, the pair $u,v$ is in one of the two configurations:
\begin{center}
\begin{tikzpicture}[scale=0.5]
\cellL{.5}0{}{$v$}\cellL{.5}3{}{$u$}
\node at (1,0) {$\cdots$};
\node at (4.5,0) {or};
\cellL07{}{$u$}\cellL1{10}{}{$v$}
\node at (8,0) {$\ddots$};
\end{tikzpicture}
\end{center}

We first make some simple but key observations.

\begin{proposition}
\label{prop:attackpair}
Suppose $\sigma\in\widetilde{\PQT}(\lambda)$ is not a fixed point of $\Psi(\sigma)$, and let $u$ be such that $\Psi(\sigma)=\Phi_u(\sigma)$. Let $(u,v)$ be an attacking pair for $\sigma$ as given in \cref{def:attackpair}, and $|\sigma(u)|
= |\sigma(v)| = a$.

\begin{enumerate}
\item If there exists a cell $y$ below $u$, then $|\sigma(y)| \neq a$.

\item For any $b$ we have $I_1(b,a)=I_1(b,\bar{a})$.

\item For any $b$ such that $|b| \neq a$, we have $I_1(a,b)=I_1(\bar{a},b)$.
\end{enumerate}

\end{proposition}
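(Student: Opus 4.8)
The plan is to verify each of the three claims directly from the definition of the ordering $<_1$, the description of $\Psi$ in \cref{def:attackpair}, and the formula for $I_1$. For part~(1), suppose for contradiction that $y=\South(u)$ exists and $|\sigma(y)|=a$. By the definition of attacking cells in the super setting (a cell and the cell immediately to its left one row up are attacking), the cell $y$ lies one row below $u$, and $u$ lies one column... wait --- more carefully: I would argue that the pair $(y,u)$ or the pair of $y$ with whatever sits to the right of $y$ would then itself be an attacking pair with the same absolute value $a$. Indeed $u$ and $y=\South(u)$ are in the same column, so they are \emph{not} attacking; but consider the cell $w$ immediately to the right of $y$ in the same row (if it exists): $w$ and $y$... hmm. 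The cleanest route is: since $(u,v)$ with $u$ preceding $v$ in reading order is the chosen pair, and the rule picks $v$ as the \emph{last} cell in reading order among all attacking pairs with absolute value $a$, and then $u$ as the \emph{last} cell attacking $v$ with $|\sigma(u)|=a$; so I would show that the existence of such a $y$ would produce either a pair contradicting the minimality of $a$ (if $y$ is attacked by or attacks something with absolute value $a$) or, more to the point, that $y$ itself attacks $u$ or attacks $v$, contradicting the choice of $u$ as the \emph{last} such cell in reading order (since $y$, being below and hence later in reading order than $u$, would be a later valid choice). This case analysis on the relative position of $y$ to $v$ is the one routine-but-fiddly step; I expect it to be the main obstacle, since one must rule out each configuration (is $y$ in the same row as $v$? one below? etc.) and confirm that in every case the selection rule for $u$ would have chosen $y$ (or something after it) instead.

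For part~(2), the claim $I_1(b,a)=I_1(b,\bar a)$ for all $b$ is essentially immediate from the structure of $<_1=\{0<1<\bar1<2<\bar2<\cdots\}$: the letters $a$ and $\bar a$ are \emph{adjacent} in this order with $a<\bar a$, and no other letter lies strictly between them. So for any $b$ with $|b|\ne a$ we have $b<a\iff b<\bar a$ and $b>a\iff b>\bar a$, whence $I_1(b,a)=I_1(b,\bar a)$ directly from the case definition of $I$. The only remaining values of $b$ are $b=a$ and $b=\bar a$: if $b=a$ then $I_1(a,a)=0$ (since $a\in\mathbb{Z}_+$) and $I_1(a,\bar a)=0$ (since $a<\bar a$), matching; if $b=\bar a$ then $I_1(\bar a,a)=1$ (since $\bar a>a$) and $I_1(\bar a,\bar a)=1$ (since $\bar a\in\mathbb{Z}_-$), again matching. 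So part~(2) holds for \emph{all} $b$ with no exception, which is slightly stronger than stated.

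For part~(3), with the hypothesis $|b|\ne a$, I would argue symmetrically: again using adjacency of $a$ and $\bar a$ in $<_1$, for any such $b$ we have $a<b\iff \bar a<b$ and $a>b\iff\bar a>b$, and since $|b|\ne a$ we never have $a=b$ or $\bar a=b$, so the case definition of $I$ gives $I_1(a,b)=I_1(\bar a,b)$ at once. (Here the hypothesis $|b|\ne a$ is genuinely needed, unlike in part~(2): if $b=\bar a$ we'd have $I_1(a,\bar a)=0\ne1=I_1(\bar a,\bar a)$.) All three parts together are short; only part~(1), the combinatorial statement about the selection rule, requires real work, and it is where I would concentrate the write-up, carefully invoking that $v$ is chosen last in reading order and then $u$ last in reading order among cells attacking $v$.
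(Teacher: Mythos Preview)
Your proposal is correct and matches the paper's approach; parts~(2) and~(3) are verified by exactly the case checks you outline. For part~(1) the paper's proof is the single observation that $y=\South(u)$ always forms an attacking pair with $v$ (immediate in each of the two possible configurations of $(u,v)$), contradicting the choice of $u$ as the last cell attacking $v$ in reading order---so the ``routine-but-fiddly'' case analysis you anticipate is shorter than you expect.
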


\begin{proof}
For (1), note that $y$ forms an attacking pair together with $v$, and since $(y,v)$ comes after $(u,v)$ in the reading order, $|\sigma(y)| \neq a$. (2) and (3) are easily verified by going through the cases.
\end{proof}

\begin{proof}[Proof of \cref{lem:A1maj}]
We will show that $\Psi$ preserves the descent set of $\sigma$, which implies $\maj$ is also preserved:
\[
\Des(\Psi(\sigma)) = \Des(\sigma).
\]
If $\sigma$ is a fixed point of $\Psi$, there is nothing to prove, so suppose $\Psi(\sigma)=\Phi_u(\sigma)$ for some cell $u$. Since $\Phi_u$ is an involution, let us assume without loss of generality that $\sigma(u)=a\in\mathbb{Z}_+$. As $u$ is the only cell that changed in $\Phi_u(\sigma)$, the only cells that might have changed whether or not they are descents are $u$ and the cell directly above it. Let us consider the cells $x$ and $y$ directly above and below $u$, if they exist. Set $a=\sigma(u), b=\sigma(y), c=\sigma(x)$. Then we obtain the transition: 
\begin{equation}
\label{psi-action-maj}
\begin{tikzpicture}[scale=0.5]
\cellL01{$c$}{$x$}\cellL11{$a$}{$u$}\cellL21{$b$}{$y$}
\node at (3,-.5) {$\longrightarrow$};
\node at (3,.2) {$\Phi_u$};
\cellL06{$c$}{$x$}\cellL16{$\bar{a}$}{$u$}\cellL26{$b$}{$y$}
\end{tikzpicture}
\end{equation}
Since $|b|\neq a$ by \cref{prop:attackpair}(1), we have $u\in\Des(\sigma)$ if and only if $\bar{u}\in\Des(\Phi_u(\sigma))$. 
By \cref{prop:attackpair}(3), $c$ (if it exists) is a descent for $\sigma$ if and only if it is a descent for $\Phi_u(\sigma)$.
Thus  $x\in\Des(\sigma)$ if and only if $x\in\Des(\Phi_u(\sigma))$, thus concluding the proof.
\end{proof}

\begin{proof}[Proof of \cref{lem:A1quinv}]
Let us assume without loss of generality that $\sigma(u)=a\in\mathbb{Z}_+$. The only triples whose contribution to $\quinv(\sigma)$ is different from $\quinv(\Phi_u(\sigma))$ are ones that include $u$. Moreover, since $I_1(a,b)=I_1(\bar{a},b)$ for any $b$ such that $|b| \neq a$ by \cref{prop:attackpair}(3), the only triples we need to consider are ones that include $u$ plus another entry with absolute value equal to $a$. We examine the following three possible types of triples containing $u$, for some entries $b,c$ in cells $y,x$ respectively.
\begin{flushleft}
\begin{tikzpicture}[scale=0.5]
\node at (-4.5,0) {\emph{Case 1}:};
\cellL0{.5}{$a$}{$u$}\cellL1{.5}{$c$}{$x$}\cellL1{3}{$b$}{$y$}
\node at (1.3,-.5) {$\cdots$};
\node at (5,0) {$\longrightarrow$};
\node at (5,.7) {$\Phi_u$};
\cellL08{$\bar{a}$}{$u$}\cellL18{$c$}{$x$}\cellL1{10.5}{$b$}{$y$}
\node at (8.8,-.5) {$\cdots$};
\end{tikzpicture}
\end{flushleft}
Since $|c|\neq a$ by \cref{prop:attackpair}(1), suppose $|b|=a$. For both $b=a$ and $b=\bar{a}$, we have $I_1(a,b)=0$, $I_1(\bar{a},b)=1$, and also $I_1(b,c)=I_1(a,c)=I_1(\bar{a},c)$ by \cref{prop:attackpair}(3). Then, exactly two of the conditions
\[\{I_1(a,c)=1,\ I_1(b,c)=0,\  I_1(a,b)=0\}\]
are true in $\sigma$ and similarly, exactly one of the conditions
\[\{I_1(\bar{a},c)=1,\ I_1(b,c)=0,\ I_1(\bar{a},b)=0\}\]
is true in $\Phi_u(\sigma)$; hence the triple $(u,x,y)$ is a $\quinv$ triple in $\Phi_u(\sigma)$, but not in $\sigma$.
\begin{flushleft}
\begin{tikzpicture}[scale=0.5]
\node at (-4.5,0) {\emph{Case 2}:};
\cellL0{.5}{$b$}{$y$}\cellL1{.5}{$a$}{$u$}\cellL1{3}{$c$}{$x$}
\node at (1.3,-.5) {$\cdots$};
\node at (5,0) {$\longrightarrow$};
\node at (5,.7) {$\Phi_u$};
\cellL08{$b$}{$y$}\cellL18{$\bar{a}$}{$u$}\cellL1{10.5}{$c$}{$x$}
\node at (8.8,-.5) {$\cdots$};
\end{tikzpicture}
\end{flushleft}
For any $x,y$ (including when $y$ does not exist) we have $I_1(c,a)=I_1(c,\bar{a})$ and $I_1(b,a)=I_1(b,\bar{a})$ by \cref{prop:attackpair}(2), and so the triple $(y,u,x)$ is trivially a $\quinv$ triple in $\sigma$ if and only if it is one in $\Phi_u(\sigma)$.

\begin{flushleft}
\begin{tikzpicture}[scale=0.5]
\node at (-4.5,0) {\emph{Case 3}:};
\cellL0{.5}{$c$}{$x$}\cellL1{.5}{$b$}{$y$}\cellL1{3}{$a$}{$u$}
\node at (1.3,-.5) {$\cdots$};
\node at (5,0) {$\longrightarrow$};
\node at (5,.7) {$\Phi_u$};
\cellL08{$c$}{$x$}\cellL18{$b$}{$y$}\cellL1{10.5}{$\bar{a}$}{$u$}
\node at (8.8,-.5) {$\cdots$};
\end{tikzpicture}
\end{flushleft}
Again, $I_1(c,a)=I_1(c,\bar{a})$ for all $c$ by \cref{prop:attackpair}(2), and $I_1(a,b)=1-I_1(\bar{a},b)$ if and only if $|b|=a$. As in Case 1, if $|b|=a$, the triple $(x,y,u)$ is a $\quinv$ triple in $\Phi_u(\sigma)$, but not in $\sigma$.

Fix $y$ to be the cell with entry $b:=\sigma(y)$ in the figures above. Next we show that if Case 1 or Case 3 occurs for a triple with $|\sigma(y)|=a$, then necessarily $y=v$, i.e. $u$ and $y$ is precisely the last attacking pair in reading order whose entries have absolute value $a$. It is easy to check this by considering all the following possible configurations of $u,v,y$ below (the first row showing Case 1 and the second row showing Case 3), that if $v\neq y$, the last pair of attacking cells with absolute value $a$ must then include $y$, which is a contradiction.

\begin{center}
\begin{tikzpicture}[scale=0.5]
\cellL0{2.5}{}{$u$}\cellL1{5}{}{$y$}\cellL1{7.5}{}{$v$}
\node at (3.3,0){$\ddots$};
\node at (5.8,-.5) {$\cdots$};
\end{tikzpicture}\ ,
\quad
\begin{tikzpicture}[scale=0.5]
\cellL0{2.5}{}{$u$}\cellL1{5}{}{$v$}\cellL1{7.5}{}{$y$}
\node at (3.3,0){$\ddots$};
\node at (5.8,-.5) {$\cdots$};
\end{tikzpicture}\ ,
\quad
\begin{tikzpicture}[scale=0.5]
\cellL00{}{$v$}\cellL0{2.5}{}{$u$}\cellL1{5}{}{$y$}
\node at (.8,.5) {$\cdots$};
\node at (3.3,0){$\ddots$};
\end{tikzpicture}\ ,
\quad
\begin{tikzpicture}[scale=0.5]
\cellL00{}{$y$}\cellL0{2.5}{}{$v$}\cellL05{}{$u$}
\node at (.8,.5) {$\cdots$};
\node at (3.3,.5) {$\cdots$};
\end{tikzpicture}
\quad
\begin{tikzpicture}[scale=0.5]
\cellL00{}{$y$}\cellL0{2.5}{}{$u$}\cellL1{5}{}{$v$}
\node at (.8,.5) {$\cdots$};
\node at (3.3,0){$\ddots$};
\end{tikzpicture}\ ,
\quad
\begin{tikzpicture}[scale=0.5]
\cellL00{}{$v$}\cellL0{2.5}{}{$y$}\cellL05{}{$u$}
\node at (.8,.5) {$\cdots$};
\node at (3.3,.5) {$\cdots$};
\end{tikzpicture}
\end{center}

Therefore, every triple in $\sigma$ contributes to $\quinv(\sigma)$ if and only if it also contributes to $\quinv(\Phi_u(\sigma))$, with the exception of exactly one triple, namely the unique triple that contains both $u$ and $v$. Since we have assumed $\sigma(u)=a\in\mathbb{Z}_+$, this triple does not contribute to $\quinv(\sigma)$, but does contribute to $\quinv(\Phi_u(\sigma))$, so we obtain $\quinv(\Phi_u(\sigma))=\quinv(\sigma)+1$, giving us the desired expression.
\end{proof}

\begin{proof}[Proof of \cref{lem:A1FP}]
By \cref{lem:A1maj,lem:A1quinv}, if $\sigma$ is not a fixed point of $\Psi$, then $\Psi(\sigma)=\Phi_u(\sigma)$ for some cell $u$, so that $p(\Phi_u(\sigma)) = p(\sigma)+\begin{cases}-1, & \mbox{if}\ \sigma(u)\in\mathbb{Z}_+,\\1, & \mbox{if}\ \sigma(u)\in\mathbb{Z}_-.\end{cases}$ Thus 
\[
q^{\maj(\Psi(\sigma))}t^{p(\Psi(\sigma))+\quinv(\Psi(\sigma))} = q^{\maj(\sigma)}t^{p(\sigma)+\quinv(\sigma)},
\] 
and so the contribution to the RHS of \eqref{C1} of $\Psi(\sigma)$ differs by a negative sign from the contribution of $\sigma$, and so these terms cancel each other out. Hence we obtain the first equality.

For the second equality, we use the fact that the fixed points of $\Psi$ are precisely the non-attacking super fillings of $\dg(\lambda)$, which implies in particular that in each row, an entry appears at most once in absolute value. Suppose the monomial $x^{\alpha}$ appears as a term in the sum. Since the polynomial is symmetric, let us assume $\alpha=(\alpha_1,\alpha_2,\ldots)$ such that $\alpha_1\geq \alpha_2 \geq \cdots$. For each $j$, $\alpha_1+\cdots+\alpha_j$ is the number of entries in $\sigma$ with absolute value at most $j$. Recall that the rows of $\dg(\lambda)$ are of lengths $(\lambda_1',\lambda_2',\ldots)$ and columns are of lengths $(\lambda_1,\lambda_2,\ldots)$. Consequently, counting by rows, the number of entries with value at most $j$ counted by $\alpha_1+\cdots+\alpha_j$ cannot exceed $\sum_i \min(\lambda_i',j) = \lambda_1+\cdots+\lambda_j$, which is the condition that $\alpha \leq \lambda$. 
\end{proof}

\section{Proof that $C_{\lambda}(X;q,t)$ satisfies \eqref{A2m} in the non-degenerate case}
\label{sec:A2m}

In this section, we will use the ordering $<_2$ on $\mathcal{A}$ and describe sign-reversing, weight-preserving maps on super fillings $\widetilde{\PQT}$, which will cancel out all terms in \eqref{C2} involving $x^{\mu}$ if $\mu \leq \lambda$. We will follow the strategy of the proof of the corresponding result for HHL tableaux in \cite[Section 5.2]{HHL05}. However, our proof will deviate for a particular subset of fillings, which we describe below as \emph{$\Phi$-degenerate} fillings, and will treat separately in \cref{sec:Phi-degenerate}.

\begin{defn}
For $\sigma\in\widetilde{\PQT}(\lambda)$, let $a\in\mathbb{Z}_+$ be the smallest positive integer such that there exists a cell $(r,j)\in\dg(\lambda)$ with $|\sigma((r,j))|=a$ such that $r>a$, if such an $a$ exists. We call such an $a$ the \emph{distinguished label} of $\sigma$. 
If $a$ exists, we call the first cell in $\PQT$ reading order whose absolute value is $a$ the \emph{distinguished cell} of $\sigma$.
Then $\sigma$ falls into one of the following three categories:
\begin{enumerate}[label=(\alph*),leftmargin=20pt]
\item if no such $a$ exists, we say $\sigma$ is \emph{$\Phi$-trivial}.
\item if the distinguished cell does not belong to any degenerate triples (regardless of whether they are $\quinv$ triples), we say $\sigma$ is \emph{$\Phi$-nondegenerate}.
\item if the distinguished cell is part of any degenerate triples (regardless of whether they are $\quinv$ triples), we say $\sigma$ is \emph{$\Phi$-degenerate}. If $r$ is the row containing the distinguished cell, we call the set of cells in row $r$ that form degenerate triples the \emph{degenerate segment}.
\end{enumerate}
See \cref{fig:degenerate} for examples of all three.
\end{defn}

\begin{figure}[h]
\centering
\begin{tikzpicture}[scale=0.5]
\cell003
\cell10{$\bar3$}\cell11{$\bar2$}\cell123
\cell201\cell212\cell22{$\bar1$}
\node at (-2,-0.5) {(a)};
\end{tikzpicture}
\quad
\begin{tikzpicture}[scale=0.5]
\graycell001
\cell10{$\bar3$}\cell11{$\bar1$}\cell123
\cell201\cell212\cell22{$\bar1$}
\node at (-2,-0.5) {(b)};
\end{tikzpicture}
\quad
\begin{tikzpicture}[scale=0.5]
\cell002
\cell10{$\bar1$}\cell11{$\bar2$}\graycell121
\cell201\cell212\cell22{$\bar1$}
\node at (-2,-0.5) {(c)};
\end{tikzpicture}
\caption{(a) $\Phi$-trivial, (b) $\Phi$-nondegenerate, and (c) $\Phi$-degenerate. The grey box marks the distinguished cell if such exists. The degenerate segment in (c) is composed of the two cells at the tops of columns 2 and 3 in row 2.}\label{fig:degenerate}
\end{figure}

The main result in this section will be the following theorem.

\begin{theorem}\label{thm:A2FP}
\begin{align*}
C_{\lambda}[X(q-1);q,t]& = \sum_{\substack{\sigma\in\widetilde{\PQT}(\lambda)\\\sigma\ \text{is}\ \Phi\text{-trivial}}} (-1)^{m(\sigma)} q^{p(\sigma)+\maj(\sigma)}t^{\quinv(\sigma)}x^{|\sigma|}\\
&=\sum_{\mu \leq \lambda'} d_{\mu\lambda}(q,t)m_{\mu}.
\end{align*}
\end{theorem}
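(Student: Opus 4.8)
The plan is to imitate the structure of \cref{sec:A1m}, but working with the ordering $<_2$ on $\mathcal{A}$ and with the roles of $\maj$ and $\quinv$ interchanged. Recall that in \cref{sec:A1m} the involution \emph{preserved} $\maj$ and \emph{shifted} $\quinv$ by one, in order to offset the shift in $p(\sigma)$ inside the factor $t^{p(\sigma)+\quinv(\sigma)}$ of \eqref{C1}. Here we want a sign-reversing involution that \emph{preserves} $\quinv$ and \emph{shifts} $\maj$ by one, to offset the shift in $p(\sigma)$ inside the factor $q^{p(\sigma)+\maj(\sigma)}$ of \eqref{C2}. Concretely, I would start from \eqref{C2}, partition $\widetilde{\PQT}(\lambda)$ into the $\Phi$-trivial, $\Phi$-nondegenerate, and $\Phi$-degenerate fillings according to the distinguished label $a$ and distinguished cell, and arrange matters so that the $\Phi$-trivial fillings are exactly the fixed points of the involution; these are then the only ones that survive, which gives the first displayed equality.

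For a $\Phi$-nondegenerate $\sigma$, set $\Phi(\sigma)=\Phi_u(\sigma)$, where $u$ is the distinguished cell and $\Phi_u$ is the sign flip of \cref{def:Phi_u}. Since the distinguished label and cell depend only on $|\sigma|$, this is an involution on the $\Phi$-nondegenerate fillings, and it changes $m(\sigma)$ by $\pm1$, hence is sign-reversing. One then proves, in analogy with \cref{lem:A1maj} and \cref{lem:A1quinv}, that $\quinv(\Phi_u(\sigma))=\quinv(\sigma)$ while $\maj(\Phi_u(\sigma))=\maj(\sigma)\pm1$ with sign opposite to the change of $p(\sigma)$; together with $x^{|\Phi_u(\sigma)|}=x^{|\sigma|}$ this shows the weight in \eqref{C2} is reversed in sign and otherwise preserved, so these terms cancel in pairs. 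The structural facts that drive this are: by minimality of $a$, the cell $\South(u)$ carries an entry of absolute value $\ge a$; and, using that $u$ is first in reading order among cells of absolute value $a$, the cell directly above $u$ (if it exists) carries an entry of absolute value $>a$. These two facts pin down exactly which descent is toggled by the sign flip, forcing the change in $\maj$ to be $\pm1$, and the hypothesis that $u$ lies in no degenerate triple is precisely what makes the $\quinv$-triples affected by the flip cancel in pairs. This part is routine in spirit (as in \cite[Section 5.2]{HHL05}), though the bookkeeping for $\quinv$ under $<_2$ requires care.

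The $\Phi$-degenerate fillings are the real obstacle: for them the naive sign flip of the distinguished cell no longer leaves $\quinv$ invariant, because of the degenerate triples in the degenerate segment. Their cancellation is deferred to \cref{prop:Phi-degenerate}, which asserts that the signed sum of the \eqref{C2}-weights over all $\Phi$-degenerate $\sigma$ vanishes; that statement is established in \cref{sec:Phi-degenerate} via the tableau bijection of \cref{sec:bijPQT}, which in turn rests on the coinversion-respecting word bijection of \cref{sec:bijwords} and the $q$-series identities ($q$-Chu--Vandermonde and its telescopic relatives) collected in \cref{sec:qseries}. Granting \cref{prop:Phi-degenerate}, the $\Phi$-nondegenerate and $\Phi$-degenerate contributions to \eqref{C2} both vanish, leaving only the $\Phi$-trivial fillings, which establishes the first equality of the theorem.

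For the second equality I would argue exactly as at the end of the proof of \cref{lem:A1FP}, with the roles of rows and columns interchanged. A filling $\sigma$ is $\Phi$-trivial precisely when, for every $j$, every cell carrying an entry of absolute value $\le j$ lies in a row of index $\le j$. Hence, invoking \cref{thm:symmetry} so that we may take a monomial $x^{\alpha}$ occurring in the sum with $\alpha_1\ge\alpha_2\ge\cdots$, the quantity $\alpha_1+\cdots+\alpha_j$ (the number of entries of absolute value at most $j$) is bounded by the number of cells of $\dg(\lambda)$ lying in rows $1,\dots,j$, namely $\lambda_1'+\cdots+\lambda_j'$; that is, $\alpha\le\lambda'$ in dominance order. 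Since all coefficients lie in $\mathbb{Q}(q,t)$, this yields the expansion $\sum_{\mu\le\lambda'}d_{\mu\lambda}(q,t)m_{\mu}$. I expect the decisive difficulty of the whole argument to be isolated in \cref{prop:Phi-degenerate} (the degenerate case), since everything else here is a sign-reversing involution modeled on the treatment of \eqref{A1m}.
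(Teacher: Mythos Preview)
Your proposal is correct and follows essentially the same approach as the paper: the trichotomy into $\Phi$-trivial, $\Phi$-nondegenerate, and $\Phi$-degenerate fillings, the involution $\Phi_u$ on the nondegenerate class via \cref{lem:Phi_u:maj} and \cref{lem:Phi_u:quinv} (packaged as \cref{cor:nondegenerate}), the deferral of the degenerate class to \cref{prop:Phi-degenerate}, and the dominance argument for the second equality all match. One small imprecision: the non-degenerate triples containing $u$ do not ``cancel in pairs'' under $\Phi_u$; rather, each individually keeps its quinv status (as in the three cases of the proof of \cref{lem:Phi_u:quinv}), and in Case~3 the argument uses specifically that $u$ is the distinguished cell so that the cell to its right in the same row precedes it in reading order and hence has absolute value strictly greater than $a$.
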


The outline of our proof is as follows. First we define an involution on $\Phi$-nondegenerate fillings which will cancel the terms coming from those fillings in \eqref{C2}. This mirrors the corresponding involution from \cite[Section 5.2]{HHL05}. Next, in \cref{sec:Phi-degenerate} we will prove the existence of a bijection on $\Phi$-degenerate fillings, which will cancel the terms coming from those fillings in \eqref{C2}. Only terms arising from $\Phi$-trivial fillings remain, from which the proposition easily follows.

Recall the map $\Phi_u$ from \cref{def:Phi_u}. As we will see in the following lemma, under total ordering $<_2$, for certain $u$'s, the action of $\Phi_u$ on $\sigma$ has the property that it increases $\maj$ by exactly one, and all triples that are not contained in the degenerate segment contribute to $\quinv(\sigma)$ if and only if they contribute to $\quinv(\Phi_u(\sigma))$.

When $\sigma$ is not $\Phi$-trivial, the following two lemmas present two important properties of $\Phi_u(\sigma)$ that hold for certain choices of the cell $u$.

\begin{lemma}\label{lem:Phi_u:maj}
Suppose $\sigma$ is not $\Phi$-trivial with distinguished label $a$, let $r$ be the row containing the distinguished cell, and let $u$ be any cell in row $r$ with $|\sigma(u)|=a$. 
Then, 
\begin{equation}
\maj(\Phi_u(\sigma))=\maj(\sigma)+\begin{cases}1,&\mbox{if}\ \sigma(u)=a,\\-1,&\mbox{if}\ \sigma(u)=\bar{a}.\end{cases}\end{equation}
\end{lemma}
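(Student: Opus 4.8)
\textbf{Proof plan for \cref{lem:Phi_u:maj}.}

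The plan is to track exactly which cells can change their descent status when we negate the entry of a single cell $u$. Recall that $\maj(\sigma)=\sum_{(x,y)\in\Des(\sigma)}(\leg(y)+1)$ where the sum is over vertically-adjacent pairs $y=\South(x)$, and that descent status is governed by the function $I_2$. Since $\Phi_u$ alters only the content of $u$, the only pairs whose descent status can possibly change are the pair $(u,\South(u))$ (if $u$ is not in the bottom row) and the pair $(\text{North}(u),u)$ (if $u$ is not at the top of its column). I would name these potential witnesses $y=\South(u)$ (entry $b$) and $x=\text{North}(u)$ (entry $c$), so that the relevant local picture is the same vertical strip $c,a,b$ appearing in \eqref{psi-action-maj}.

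First I would handle the cell above, $x=\text{North}(u)$. Here the point is that $x$ exists only if $u$ is not at the top of its column, and since $a$ is the \emph{distinguished label} — the smallest positive value occurring strictly above its own row-index — the cell $u$ with $|\sigma(u)|=a$ sitting in row $r$ has no entry of absolute value $a$ above it in the same column (an entry of absolute value $a$ in row $r+1$ would have row-index $r+1>r\ge a+1$... more carefully: the distinguished label is the \emph{least} $a'$ with some cell of absolute value $a'$ in a row $>a'$; if there were a second cell of absolute value $a$ higher up it would also be in a row $>a$, which is fine, but the distinguished \emph{cell} is the first in reading order, and $u$ is taken in row $r$). The cleaner statement I will use: $|c|\neq a$, which can be argued exactly as in \cref{prop:attackpair}(1) using that $(\text{North}(u),u)$ would be... — here I instead invoke the analogue of \cref{prop:attackpair}(3) for the ordering $<_2$, namely that for $|b|\neq a$ we have $I_2(a,b)=I_2(\bar a,b)$ and $I_2(b,a)=I_2(b,\bar a)$. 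Granting $|c|\neq a$, the status of the pair $(x,u)$, governed by $I_2(c,\cdot)$ evaluated at $a$ versus $\bar a$, is unchanged; so $x$ contributes to $\maj$ before iff after.

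It then remains to analyze the pair $(u,y)$ with $y=\South(u)$, entry $b$. The key elementary fact is that $I_2(a,b)$ and $I_2(\bar a,b)$ differ by exactly one — specifically $I_2(a,b)=0\implies$ nothing forced, but one checks directly from the definition of $I_2$ under $<_2$ that negating the top entry of a vertical pair from $a$ to $\bar a$ turns a non-descent into a descent when... — concretely: under $<_2$ we have $\bar a$ larger than every positive letter, so $I_2(\bar a,b)=1$ for every $b$ (positive letter or $\bar a$-type with $|b|\le a$; the only way $I_2(\bar a,b)=0$ is $b=\bar{a'}\in\mathbb{Z}_-$ with $\overline{a'}>\bar a$, i.e. $a'<a$), while $I_2(a,b)=1$ only when $a>b$. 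Rather than enumerate, I would argue: if $u$ is at the bottom of its column there is no $y$ and we must separately confirm the claimed change still occurs — but in fact when $\sigma(u)=a$ with $r>a\ge 1$ we have $r\ge 2$ so $u$ is \emph{never} in the bottom row, so $y$ always exists. Then I claim $b<a$ in value-and-reading-order sense forced by minimality is \emph{not} automatic; instead I use: the distinguished cell is the first in reading order of absolute value $a$, and $y=\South(u)$ comes later in reading order, with $|\sigma(y)|$... Here the honest move is: $(u,y)$ being the relevant pair, set $b=\sigma(y)$; negating $a\mapsto\bar a$ changes $I_2(a,b)$ to $I_2(\bar a,b)$, and since under $<_2$ every $\bar a$ exceeds $a$ and exceeds every positive value $<a$, while $b$ has $|b|\ge$ something — I would simply verify by the two-case split on the sign of $\sigma(u)$ that $I_2(\sigma(u),b)$ flips from $0$ to $1$ (resp. $1$ to $0$), contributing $\pm(\leg(y)+1)$; and crucially $\leg(y)+1=\leg(\South(u))+1=(\lambda_i-(r-1))=\leg(u)+1$ — wait, the contribution of the pair $(u,y)$ to $\maj$ carries weight $\leg(y)+1=\leg(u)+2$? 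No: in $\maj(\sigma)=\sum_{x:\sigma(x)>\sigma(\South(x))}(\leg(x)+1)$ the pair is indexed by the \emph{upper} cell, so the pair $(u,\South(u))$ contributes $\leg(u)+1$. But the lemma claims the change is exactly $\pm 1$, i.e. weight $1$, which forces $\leg(u)=0$, i.e. $u$ is at the \emph{top} of its column! That is consistent only if the distinguished cell, being at row $r$ with $r>a$, is at the column top — which is NOT generally true.

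So the \textbf{main obstacle}, and the thing I would think hardest about, is reconciling the "$\pm 1$" in the statement with the leg-weighting in $\maj$: the resolution must be that the relevant cell $u$ in this lemma is meant to be at the top of its column (leg $0$), which I expect follows because the \emph{distinguished cell} is chosen minimally and any cell of absolute value $a$ in row $r>a$ which is \emph{not} at a column top would put a not-yet-accounted pattern — more precisely, I would prove: if $|\sigma(u)|=a$ with $u=(r,i)$, $r>a$, and $u$ is not at the top of column $i$, then some cell strictly above $u$ has absolute value $\le a$ and lies in a row $>$ its own index, contradicting minimality of $a$ unless that value equals $a$ — and then iterating/choosing the topmost such cell. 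Thus without loss I may take $u$ to be a \emph{top} cell, $\leg(u)=0$, and the single pair $(u,\South(u))$ changes $\maj$ by exactly $\pm 1$ according to the sign of $\sigma(u)$, with the cell above nonexistent (so no $x$ to worry about). Assembling: establish $u$ is a column-top cell; note $\South(u)$ exists since $r>a\ge 1\Rightarrow r\ge 2$; verify via the $I_2$ case analysis that the descent status of $(u,\South(u))$ switches on and off exactly as $\sigma(u)$ is $a$ or $\bar a$; conclude the displayed formula.
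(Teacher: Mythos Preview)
Your proposal has a genuine gap: you have the wrong mechanism for why $\maj$ changes by exactly $\pm 1$.

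The claim that the descent status of the pair $(x,u)$ (with $x$ the cell above $u$, content $c$) is \emph{unchanged} under $\Phi_u$ is false. You invoke an analogue of \cref{prop:attackpair}(3) for the ordering $<_2$, asserting that $I_2(b,a)=I_2(b,\bar a)$ whenever $|b|\ne a$. But that identity fails under $<_2$: for instance with $c=5$ and $a=3$ we have $I_2(5,3)=1$ while $I_2(5,\bar 3)=0$. So the pair $(x,u)$ \emph{does} change status, and your subsequent attempt to force $u$ to be a column-top cell (to make the single remaining pair carry weight $1$) is solving the wrong problem.

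The correct argument, as in the paper, uses \cref{lem:obs}: since $a$ is the distinguished label and $r$ the row of the distinguished cell, one has $|b|\ge a$ for $b=\sigma(\South(u))$ and $|c|>a$ for $c=\sigma(x)$ with $x$ directly above $u$. Under $<_2$ this places both $b$ and $c$ (weakly) between $a$ and $\bar a$, so that $I_2(c,a)=1$, $I_2(c,\bar a)=0$, $I_2(a,b)=0$, $I_2(\bar a,b)=1$. Thus when $\sigma(u)=a$, \emph{both} adjacent pairs flip: $x$ goes from descent to non-descent, and $u$ goes from non-descent to descent. The net change in $\maj$ is
\[
(\leg(u)+1)-(\leg(x)+1)=(\leg(u)+1)-\leg(u)=+1,
\]
since $x$ is one cell above $u$. (The boundary cases where $x$ or $y$ doesn't exist are handled similarly; note that if $x$ doesn't exist then indeed $\leg(u)=0$ and only the pair $(u,y)$ contributes.) The $\sigma(u)=\bar a$ case is symmetric. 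So the $\pm 1$ comes from a telescoping of leg values across two flipped pairs, not from $u$ being at the top of its column.
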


\begin{lemma}\label{lem:Phi_u:quinv}
Suppose $\sigma$ is not $\Phi$-trivial with distinguished label $a$, and let $r$ be the row containing the distinguished cell. Then
\begin{enumerate}
\item if $\sigma$ is $\Phi$-nondegenerate and $u$ is the distinguished cell, or
\item if $\sigma$ is $\Phi$-degenerate and $u$ is any cell in the degenerate segment of row $r$ with $|\sigma(u)|=a$, 
\end{enumerate}
the following holds: every non-degenerate triple containing $u$ in $\Phi_u(\sigma)$  is a $\quinv$ triple if and only if it's also a $\quinv$ triple in $\sigma$.
\end{lemma}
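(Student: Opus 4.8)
The plan is to analyze the effect of $\Phi_u$ on $\quinv$ triple-by-triple, exactly as in the proofs of \cref{lem:A1quinv} and (for the $\maj$ part) \cref{lem:Phi_u:maj}, but now using the ordering $<_2$. First I would record the basic arithmetic facts about $I_2$ that make this work: since $a$ is the \emph{smallest} positive integer with a cell of absolute value $a$ above its diagonal position (i.e.\ the distinguished label), and since under $<_2$ the negative letters are all larger than the positive ones in reverse order, one has $I_2(b,a)=I_2(b,\bar a)$ for every letter $b$, and $I_2(a,b)=I_2(\bar a,b)$ for every letter $b$ with $|b|\neq a$. (These are the $<_2$ analogues of \cref{prop:attackpair}(2)--(3).) Consequently, just as in \cref{lem:A1quinv}, the only triples whose $\quinv$-status can change under $\Phi_u$ are those containing $u$ together with another cell whose entry has absolute value $a$.

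Next I would enumerate the three positions $u$ can occupy in a \emph{non-degenerate} triple, mirroring Cases 1--3 in the proof of \cref{lem:A1quinv}: $u$ in the top cell $(r+1,i)$ of the triple, $u$ in the middle cell $(r,i)$, or $u$ in the arm cell $(r,j)$. In the ``middle-cell'' case the triple contributes iff $I(c,a)$ and $I(b,a)$ read a certain way (where $a$ is the middle entry), and since $I_2(c,a)=I_2(c,\bar a)$ and $I_2(b,a)=I_2(b,\bar a)$, the $\quinv$-status is unchanged. In the ``top-cell'' and ``arm-cell'' cases the relevant quantity is $I(a,\cdot)$ against the other two entries; if the paired entry $b$ has $|b|\neq a$ then $I_2(a,b)=I_2(\bar a,b)$ and nothing changes, while if $|b|=a$ the flip toggles exactly that one condition and hence the $\quinv$-status of that one triple. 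So the statement would be established \emph{except} that we must rule out such a toggling triple existing among the non-degenerate triples containing $u$ --- or rather, show that whenever it could occur, $u$ is not the cell selected by the hypotheses (1) or (2). This is where the hypotheses do their work: in case (1), $u$ is the distinguished cell, which is the \emph{first} cell in reading order with absolute value $a$; in case (2), $u$ lies in the degenerate segment of row $r$, and by definition of the distinguished cell and the degenerate segment, any other cell with entry of absolute value $a$ in a relevant (non-degenerate) triple with $u$ would have to precede the distinguished cell or otherwise contradict minimality of $a$ or the structure of row $r$. I would do a short case check on the possible relative positions of $u$ and a hypothetical second cell $y$ with $|\sigma(y)|=a$, analogous to the six-configuration picture at the end of the proof of \cref{lem:A1quinv}, to conclude that no such $y$ can sit in a non-degenerate triple with $u$.

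The main obstacle I anticipate is precisely this last bookkeeping step: carefully verifying that under hypotheses (1) and (2) no non-degenerate triple containing $u$ also contains a second entry of absolute value $a$. In the $\Phi$-nondegenerate case (1) this should follow cleanly because $u$ is minimal in reading order among cells of absolute value $a$ and --- by an argument like \cref{prop:attackpair}(1) using that $a$ is the distinguished label, hence $a$ does not repeat in absolute value within the part of any column at height $\le a$ --- the potential partner cells simply are not there. The $\Phi$-degenerate case (2) is subtler because row $r$ may contain \emph{several} cells of absolute value $a$ (the degenerate segment), so two of them could in principle sit in a common triple; the point will be that any such pair sits in a \emph{degenerate} triple (both cells at the tops of their columns), which is exactly what the lemma excludes, so the non-degenerate triples containing $u$ still have a unique cell of absolute value $a$. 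Once these position checks are in place, the rest is the routine $I_2$-arithmetic already rehearsed in \cref{lem:A1quinv}, and the lemma follows.
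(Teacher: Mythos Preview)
Your proposal contains a genuine error at the outset: the ``$<_2$ analogues of \cref{prop:attackpair}(2)--(3)'' that you assert are simply false. Under $<_2 = \{0<1<2<\cdots<\bar 3<\bar 2<\bar 1\}$, take $a=1$ and $b=2$: then $I_2(2,1)=1$ but $I_2(2,\bar 1)=0$, so $I_2(b,a)\ne I_2(b,\bar a)$; likewise $I_2(1,2)=0$ but $I_2(\bar 1,2)=1$, so $I_2(a,b)\ne I_2(\bar a,b)$ even though $|b|\ne a$. In fact, under $<_2$, flipping $a\leftrightarrow\bar a$ changes $I_2$ in \emph{most} comparisons, not in almost none. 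Consequently your reduction ``only triples containing $u$ together with another cell of absolute value $a$ can change'' does not hold here, and the entire subsequent bookkeeping about locating a second $a$ is beside the point.

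The correct mechanism is different and rests on \cref{lem:obs}: every cell in rows $r-1,r$ has absolute value $\geq a$, and every cell in row $r+1$ or higher has absolute value $>a$. From this one gets the \emph{uniform} identities $I_2(a,b)=0$, $I_2(\bar a,b)=1$ whenever $|b|\geq a$, and $I_2(c,a)=1$, $I_2(c,\bar a)=0$ whenever $|c|>a$. In each non-degenerate triple containing $u$, exactly \emph{two} of the three conditions in \cref{def:super} involve the entry at $u$, and under $a\mapsto\bar a$ both of them flip simultaneously, so the parity (hence the quinv status) is unchanged. The hypotheses (1)/(2) are used only for the configuration where $u$ is the \emph{middle} cell of the triple: in the $\Phi$-degenerate case that configuration cannot occur since $u$ sits at the top of its column; in the $\Phi$-nondegenerate case one needs $|c|>a$ for the cell $c$ to the right of $u$ in the same row, and this follows because $u$ is first in reading order among cells of absolute value $a$. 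No ``second cell of absolute value $a$'' analysis is needed.
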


\begin{example}
Let $\sigma$ be the filling in \cref{fig:degenerate}(b). The distinguished label is $a=1$ and $u=(3,1)$ is the distinguished cell (i.e. the $1$ in the topmost row), then $\Phi_u(\sigma)$ is the filling

\begin{center}
\begin{tikzpicture}[scale=0.5]
\cell00{$\bar1$}
\cell10{$\bar3$}\cell11{$\bar1$}\cell123
\cell201\cell212\cell22{$\bar1$}
\end{tikzpicture}
\end{center}
Moreover, $\maj(\sigma)=3$, $\maj(\Phi_u(\sigma))=4$, and $\quinv(\sigma)=\quinv(\Phi_u(\sigma))=3$.
\end{example}

\begin{remark} 
The knowledgeable reader may observe that a similar involution $\Phi_{u'}$ was
defined in \cite[Section 5.2]{HHL05}, where $u'$ was chosen to be the topmost, \emph{leftmost} cell of $\dg(\lambda)$ whose absolute value matches the distinguished label of $\sigma$. 
\end{remark}

We make a key observation. 
\begin{lemma}\label{lem:obs}
Let $a$ be the distinguished label and $r$ the row containing the distinguished cell in a filling $\sigma$. Let $x$ be any cell in either row $r-1$ or $r$, and let $y$ be any cell in row $r+1$ or higher. Then $|\sigma(x)|\geq a$ and $|\sigma(y)|>a$.
\end{lemma}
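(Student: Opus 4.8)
The plan is to unwind the definition of the distinguished label and cell, extract one structural observation, and then run two short proofs by contradiction using the minimality built into that definition. Recall that $a$ is chosen to be the \emph{smallest} positive integer for which some cell of $\dg(\lambda)$ in a row strictly greater than $a$ has absolute value $a$; equivalently, for every $b<a$, every cell of $\sigma$ with absolute value $b$ lies in a row $\leq b$. The distinguished cell is the first cell in reading order with absolute value $a$, and since reading order proceeds through the rows from top to bottom, this cell lies in the \emph{topmost} row of $\dg(\lambda)$ that contains an entry of absolute value $a$. The key observation I would record first is that $r>a$: by definition there is some cell with absolute value $a$ in a row $r_0>a$, and the topmost row containing such an entry is at least $r_0$, so $r\geq r_0>a$; in particular $r-1\geq a$. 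As a byproduct of the same observation, every cell of $\sigma$ with absolute value exactly $a$ lies in a row $\leq r$.

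Next I would prove the bound on $x$. Let $x$ lie in row $r'$ with $r'\in\{r-1,r\}$, and suppose for contradiction that $|\sigma(x)|=b$ with $b<a$. Since $r'\geq r-1\geq a>b$, the cell $x$ witnesses an entry of absolute value $b$ in a row strictly greater than $b$, so $b$ would qualify for the defining property of the distinguished label; by minimality this forces $a\leq b$, contradicting $b<a$. Hence $|\sigma(x)|\geq a$. Then for the bound on $y$: let $y$ lie in row $r'$ with $r'\geq r+1$ and suppose $|\sigma(y)|=b$ with $b\leq a$. If $b<a$, then $r'\geq r+1>a>b$ gives the same contradiction with minimality. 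If $b=a$, then $y$ is a cell with absolute value $a$ in a row $r'\geq r+1>r$, contradicting the byproduct above that all such cells sit in rows $\leq r$. Hence $|\sigma(y)|>a$.

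The argument is routine once the definitions are in place; the only point needing care is the initial observation that the distinguished cell lies in the topmost row carrying an entry of absolute value $a$, which is exactly what simultaneously yields $r>a$ (hence $r-1\geq a$, needed for the claim about $x$) and confines all copies of $a$ to rows $\leq r$ (needed for the claim about $y$). I do not anticipate any genuine obstacle here.
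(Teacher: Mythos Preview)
Your proof is correct and follows the same approach as the paper's own proof: both use the minimality of $a$ to handle rows $r-1$ and $r$, and the fact that the distinguished cell lies in the topmost row containing an entry of absolute value $a$ to handle rows $r+1$ and above. If anything, your version is more careful than the paper's, since you explicitly record the inequality $r>a$ (hence $r-1\geq a$) needed to make the minimality argument go through for row $r-1$, whereas the paper leaves this implicit.
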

\begin{proof}
Since $a$ is by definition the smallest positive integer such that there exists a cell in row $j>a$ with absolute value $a$, a row greater than or equal to $r-1$ cannot contain any cells with absolute value strictly less than $a$. Moreover, since $u$ is chosen such that $r$ is maximal, rows $r+1$ and higher cannot contain any cells with absolute value less than or equal to $a$. 
\end{proof}

\begin{proof}[Proof of \cref{lem:Phi_u:maj}]
The proof is identical to that of \cite[Lemma 5.2]{HHL05}. Let $u$ be any cell in row $r$ with $|\sigma(u)|=a$. Without loss of generality, assume $\sigma(u)=a$. The only possible change in the descent set of $\sigma$ by the action of $\Phi_u$ can occur at the cell $u$ or the cell directly above $u$ if such exists. 
Consider the cells $x$ and $y$ with contents $c:=\sigma(x)$ and $b:=\sigma(y)$ directly above and below $u$, respectively, if such exist. 
Then we obtain a transition exactly analogous to \eqref{psi-action-maj}.

By \cref{lem:obs}, $|b| \geq a$ and $|c| > a$. Thus $I_2(c,a)=I_2(\bar{a},b)=1$ and $I_2(c,\bar{a})=I_2(a,b)=0$, so necessarily $x\in\Des(\sigma)$ and $u\not\in\Des(\sigma)$, while $x\not\in\Des(\Phi_u(\sigma))$ and $u\in\Des(\Phi_u(\sigma))$, implying that $\maj(\Phi_u(\sigma))=\maj(\sigma)+1$. A similar argument proves the statement in the cases where the cells $x$ or $y$ may not exist.
\end{proof}

\begin{proof}[Proof of \cref{lem:Phi_u:quinv}]
We adapt the proof of \cite[Lemma 5.2]{HHL05} to the setting of $\widetilde{\PQT}$.  Let $u$ be a cell in row $r$ with $|\sigma(u)|=a$, such that either
\begin{enumerate}
\item $\sigma$ is $\Phi$-nondegenerate and $u$ is the distinguished cell, or
\item $\sigma$ is $\Phi$-degenerate and $u$ is in the degenerate segment of row $r$. 
\end{enumerate}
Without loss of generality, assume $\sigma(u)=a$. 

Clearly the only triples that may be affected by $\Phi_u$ are those containing the cell $u$. Let us inspect the three possible \emph{nondegenerate} configurations of such triples. Let $x,y$ be the other two cells in the triple, with contents $b:=\sigma(y)$ and $c:=\sigma(x)$; since the triples we examine are non-degenerate, both $x$ and $y$ must exist. We invoke \cref{lem:obs}.
\begin{flushleft}
\begin{tikzpicture}[scale=0.5]
\node at (-4.5,0) {\emph{Case 1}:};
\cellL0{.5}{$a$}{$u$}\cellL1{.5}{$c$}{$x$}\cellL1{3.5}{$b$}{$y$}
\node at (1.3,-.5) {$\cdots$};
\node at (5.5,0) {$\longrightarrow$};
\node at (5.5,.7) {$\Phi_u$};
\cellL0{8.5}{$\bar{a}$}{$u$}\cellL1{8.5}{$c$}{$x$}\cellL1{11.5}{$b$}{$y$}
\node at (9.3,-.5) {$\cdots$};
\end{tikzpicture}
\end{flushleft}
Both $|b|\geq a$ and $|c|\geq a$. Then $I_2(a,c)=I_2(a,b)=0$ and $I_2(\bar{a},c)=I_2(\bar{a},b)=1$. Thus the triple $(u,x,y)$ is a $\quinv$ triple both in $\sigma$ and in $\Phi(\sigma)$ if and only if $I_2(b,c)=1$.
\begin{flushleft}
\begin{tikzpicture}[scale=0.5]
\node at (-4.5,0) {\emph{Case 2}:};
\cellL0{.5}{$c$}{$x$}\cellL1{.5}{$b$}{$y$}\cellL1{3.5}{$a$}{$u$}
\node at (1.3,-.5) {$\cdots$};
\node at (5.5,0) {$\longrightarrow$};
\node at (5.5,.7) {$\Phi_u$};
\cellL0{8.5}{$c$}{$x$}\cellL1{8.5}{$b$}{$y$}\cellL1{11.5}{$\bar{a}$}{$u$}
\node at (9.2,-.5) {$\cdots$};
\end{tikzpicture}
\end{flushleft}
Since $|c|>a$, we have $I_2(c,a)=1$ and $I_2(c,\bar{a})=0$. Since $|b|\geq a$, we have $I_2(a,b)=0$ and $I_2(\bar{a},b)=1$. Thus the triple $(x,y,u)$ is a $\quinv$ triple both in $\sigma$ and in $\Phi(\sigma)$ if and only if $I_2(c,b)=0$.
\begin{flushleft}
\begin{tikzpicture}[scale=0.5]
\node at (-4.5,0) {\emph{Case 3}:};
\cellL0{.5}{$b$}{$y$}\cellL1{.5}{$a$}{$u$}\cellL1{3.5}{$c$}{$x$}
\node at (1.3,-.5) {$\cdots$};
\node at (5.5,0) {$\longrightarrow$};
\node at (5.5,.7) {$\Phi_u$};
\cellL0{8.5}{$b$}{$y$}\cellL1{8.5}{$\bar{a}$}{$u$}\cellL1{11.5}{$c$}{$x$}
\node at (9.2,-.5) {$\cdots$};
\end{tikzpicture}
\end{flushleft}
This case can only occur if $\sigma$ is $\Phi$-nondegenerate, since otherwise $u$ is necessarily in the degenerate segment, contradicting the existence of the cell $y$. Thus we assume $u$ is the distinguished cell, which means it is the first cell in reading order with content equal to $a$ or $\bar{a}$. Consequently $|c|>a$ because $x$ precedes $u$ in the reading order, and so $I_2(c,a)=1$ and $I_2(c,\bar{a})=0$. Since $|b|>a$, we have $I_2(b,a)=1$ and $I_2(b,\bar{a})=0$. Thus, again, the triple $(y,u,x)$ is a $\quinv$ triple both in $\sigma$ and in $\Phi(\sigma)$ if and only if $I_2(b,c)=1$.

In each of these cases, all non-degenerate triples containing $u$ contribute to $\quinv(\sigma)$ if and only if they also contribute to $\quinv(\Phi_u(\sigma))$, as desired.
\end{proof}

\begin{corollary}\label{cor:nondegenerate}
Let $\sigma\in\widetilde{\PQT}(\lambda)$ be $\Phi$-nondegenerate with distinguished cell $u$.
\begin{enumerate}[label=(\roman*)]
\item $\Phi_u$ is an involution, and
\item $q^{p(\Phi_u(\sigma))+\maj(\Phi_u(\sigma))}t^{\quinv(\Phi_u(\sigma))} = q^{p(\sigma)+\maj(\sigma)}t^{\quinv(\sigma)}$.
\end{enumerate}
\end{corollary}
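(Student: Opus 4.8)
The plan is to read both parts off of \cref{lem:Phi_u:maj} and \cref{lem:Phi_u:quinv}, after first observing that the distinguished label, the distinguished cell, and the property of being $\Phi$-nondegenerate are all determined by $\sigma$ through $|\sigma|$ alone.

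For (i), I would first note that for a fixed cell $u$ the map $\Phi_u$ of \cref{def:Phi_u} is trivially an involution: it negates the entry of $u$, leaves all other entries fixed, and negation on $\mathcal{A}$ is an involution. The only thing to check is that the recipe ``take $u$ to be the distinguished cell'' is itself consistent under iteration, i.e.\ that $\Phi_u(\sigma)$ is again $\Phi$-nondegenerate with the \emph{same} distinguished cell $u$. This holds because $|\Phi_u(\sigma)| = |\sigma|$, and the distinguished label $a$ (the least positive integer admitting a cell of absolute value $a$ in a row $r>a$), the distinguished cell (the first such cell in reading order), and whether that cell lies in a degenerate triple (a condition depending only on the shape $\lambda$ and on which cells share an absolute value) are all functions of $|\sigma|$. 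Hence applying $\Phi_u$ to $\Phi_u(\sigma)$ negates the entry of the same cell $u$ a second time and returns $\sigma$.

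For (ii), I would split into the cases $\sigma(u)=a\in\mathbb{Z}_+$ and $\sigma(u)=\bar a\in\mathbb{Z}_-$ (a $\Phi$-nondegenerate filling is not $\Phi$-trivial, so both lemmas apply with $u$ the distinguished cell). If $\sigma(u)=a$, then $\Phi_u(\sigma)(u)=\bar a$, so $p(\Phi_u(\sigma))=p(\sigma)-1$, while \cref{lem:Phi_u:maj} gives $\maj(\Phi_u(\sigma))=\maj(\sigma)+1$; adding, the exponent of $q$ is unchanged. For the exponent of $t$: since $\sigma$ is $\Phi$-nondegenerate the distinguished cell $u$ lies in no degenerate triple, so every degenerate triple is untouched by $\Phi_u$, and the only triples whose $\quinv$-status $\Phi_u$ could alter are the non-degenerate triples containing $u$; by \cref{lem:Phi_u:quinv}(1) each of these is a $\quinv$ triple in $\Phi_u(\sigma)$ if and only if it is one in $\sigma$, so $\quinv(\Phi_u(\sigma))=\quinv(\sigma)$. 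The case $\sigma(u)=\bar a$ is symmetric: $p$ increases by $1$, $\maj$ decreases by $1$ by \cref{lem:Phi_u:maj}, and $\quinv$ is again unchanged. Combining, $q^{p(\Phi_u(\sigma))+\maj(\Phi_u(\sigma))}t^{\quinv(\Phi_u(\sigma))} = q^{p(\sigma)+\maj(\sigma)}t^{\quinv(\sigma)}$, as claimed.

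The argument is essentially immediate once the two lemmas are in hand; the only point needing a little care — and the closest thing to an obstacle — is checking that the assignment $\sigma\mapsto \Phi_{u(\sigma)}(\sigma)$ is well defined as an involution on the set of $\Phi$-nondegenerate fillings, i.e.\ that $\Phi$-nondegeneracy and the choice of distinguished cell survive the sign flip. This is exactly where one invokes that all these data depend only on $|\sigma|$.
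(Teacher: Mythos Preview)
Your proposal is correct and follows essentially the same approach as the paper: invoke $|\Phi_u(\sigma)|=|\sigma|$ to see that the distinguished label, distinguished cell, and $\Phi$-nondegeneracy are unchanged, then combine \cref{lem:Phi_u:maj,lem:Phi_u:quinv} to balance the exponents. The paper's proof is slightly terser (it uses ``without loss of generality'' rather than treating the two sign cases separately, and it does not spell out that degenerate triples are untouched because $u$ lies in none), but the content is the same.
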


\begin{proof}
The fact that $\Phi_u$ is an involution is immediate, since $|\Phi_u(\sigma)|=|\sigma|$, and so both fillings have the same distinguished label and the same distinguished cell $u$ is chosen for $\Phi_u(\sigma)$ as for $\sigma$.

Part (ii) is immediate from \cref{lem:Phi_u:maj,lem:Phi_u:quinv}. Without loss of generality, assume $\sigma(u)\in\mathbb{Z}_+$. Then $p(\Phi_u(\sigma))=p(\sigma)-1$, $\maj(\Phi_u(\sigma)) = \maj(\sigma)+1$, and $\quinv(\sigma)=\quinv(\Phi_u(\sigma))$ since $\sigma$ is $\Phi$-nondegenerate, so the equality follows.
\end{proof}

Unfortunately, when $\sigma\in\widetilde{\PQT}(\lambda)$ is $\Phi$-degenerate, it may be the case that no choice of the cell $u$ for which $|\sigma(u)|=a$ will be such that $\Phi_u$ preserves $\quinv$ and changes $\maj$ by 1. For example, going back to the filling $\sigma$ in \cref{fig:degenerate}(c), which has $\maj=3$ and $\quinv=3$, we list below the outputs of $\Phi_u(\sigma)$ where we set $u$ to be each of the four cells whose content is $1$ or $\bar{1}$, with the highlighted cell indicating the choice of $u$.

\begin{center}
\begin{tikzpicture}[scale=0.5]
\cell002
\graycell10{$1$}\cell11{$\bar2$}\cell12{1}
\cell201\cell212\cell22{$\bar1$}
\node at (0.5,-3) {$\substack{\maj=2\\\quinv=2}$};
\end{tikzpicture}
\quad
\begin{tikzpicture}[scale=0.5]
\cell002
\cell10{$\bar1$}\cell11{$\bar2$}\graycell12{$\bar1$}
\cell201\cell212\cell22{$\bar1$}
\node at (0.5,-3) {$\substack{\maj=4\\\quinv=4}$};
\end{tikzpicture}
\quad
\begin{tikzpicture}[scale=0.5]
\cell002
\cell10{$\bar1$}\cell11{$\bar2$}\cell121
\graycell20{$\bar1$}\cell212\cell22{$\bar1$}
\node at (0.5,-3) {$\substack{\maj=3\\\quinv=2}$};
\end{tikzpicture}
\quad
\begin{tikzpicture}[scale=0.5]
\cell002
\cell10{$\bar1$}\cell11{$\bar2$}\cell121
\cell201\cell212\graycell22{$1$}
\node at (0.5,-3) {$\substack{\maj=3\\\quinv=2}$};
\end{tikzpicture}
\end{center}

Thus we need to do something more subtle. In the following section, we will describe with some technical manipulations how to define a bijection on $\Phi$-degenerate fillings that preserves $\quinv$ and changes $\maj$ by 1.

\section{A bijection on $\Phi$-degenerate tableaux}
\label{sec:Phi-degenerate}

In this section, we \emph{demonstrate the existence} of a bijection $\Theta$ on $\Phi$-degenerate super fillings with the following properties. Let $\sigma\in\widetilde{\PQT}$. 
\begin{enumerate}[label=\roman*.]
\item $x^{|\Theta(\sigma)|}=x^{|\sigma|}$ and $p(\Theta(\sigma))=p(\sigma)  \pm 1$,
\item $\maj(\Theta(\sigma))=\maj(\sigma)\pm 1$, and
\item $\quinv(\Theta(\sigma))=\quinv(\sigma)$.
\end{enumerate}
Note that we do not succeed in constructing such an bijection, and it remains an open question to find an explicit bijection.

\begin{defn}For a $\Phi$-degenerate filling $\sigma$, define the \emph{degenerate word} to be the word read off the content of the cells that form degenerate triples in the topmost row containing
the distinguished cell. For example, in \cref{fig:degenerate}(c), the degenerate word is $(\bar{2},1)$.
\end{defn}

The main result of this section is the following.

\begin{theorem}
\label{prop:Phi-degenerate}
For a partition $\lambda$, the sum over the $\Phi$-degenerate subset of fillings in $\widetilde{\PQT}(\lambda)$ in the right hand side of \eqref{C2}, is zero.
\[
\sum_{\substack{\sigma\in\widetilde{\PQT}(\lambda)\\\sigma\ \text{is}\ \Phi\text{-degenerate}}} (-1)^{m(\sigma)} q^{p(\sigma)+\maj(\sigma)}t^{\quinv(\sigma)}x^{|\sigma|} =0.
\]
\end{theorem}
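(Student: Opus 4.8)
\emph{Proof plan.}
The plan is to deduce \cref{prop:Phi-degenerate} from the existence of a bijection $\Theta$ on $\Phi$-degenerate fillings with the three properties stated just above: since those properties are arranged so that $\Theta(\sigma)$ and $\sigma$ carry the same value of $x^{|\sigma|}$, of $q^{p(\sigma)+\maj(\sigma)}$ (because $p$ and $\maj$ change by $\pm 1$ with opposite signs), and of $t^{\quinv(\sigma)}$, but opposite signs $(-1)^{m(\sigma)}$, the sum in the statement collapses in pairs. Equivalently — and this is the form I would actually prove — within each fibre over a fixed value of the triple $\bigl(x^{|\sigma|},\,p(\sigma)+\maj(\sigma),\,\quinv(\sigma)\bigr)$ the number of $\Phi$-degenerate fillings with $m(\sigma)$ even equals the number with $m(\sigma)$ odd. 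Thus it suffices to prove the displayed identity directly, and no explicit $\Theta$ need be produced.

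\textbf{Reduction to a local sum.} First I would fix all of the data of $\sigma$ outside the degenerate segment together with the multiset of absolute values occurring on the segment; this pins down the distinguished label $a$, the row $r$, and the columns of the segment. On such a class $x^{|\sigma|}$ is constant, and so is every contribution to $\maj(\sigma)$ and to $\quinv(\sigma)$ coming from a cell, or a (possibly degenerate) triple, disjoint from the segment. Hence the class sum factors as a fixed monomial times $q^{\mathrm{const}}t^{\mathrm{const}}$ times $\sum_{w}(-1)^{m(w)}q^{(p+\maj)_{\mathrm{loc}}(w)}t^{\quinv_{\mathrm{loc}}(w)}$, where $w$ ranges over the words of the prescribed absolute-value content (with at least one entry of absolute value $a$, so that $\Phi$-degeneracy is preserved), and the ``local'' statistics collect exactly the parts that see the segment: the pairs of cells inside the segment (the degenerate triples), the cells of row $r-1$ directly below the segment (through $\maj$ and through triples of the form $(r,i),(r-1,i),(r-1,j)$), and the taller columns to the left of the segment (through triples $(r+1,i),(r,i),(r,j)$ with $i$ to the left). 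A short check with \cref{lem:obs} and the ordering $<_2$ shows that an entry of absolute value exactly $a$ contributes $+1$ to $(p+\maj)_{\mathrm{loc}}$ whether it is barred or not, whereas for the larger entries the barred/unbarred choice is genuinely entangled with the fixed cells below and to the left — which is precisely why the one-cell map $\Phi_u$ fails here and a true rearrangement of the segment is needed.

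\textbf{Passing to words and the $q$-series identity.} This local sum is the business of \cref{sec:bijPQT} and \cref{sec:bijwords}. In \cref{sec:bijPQT} I would encode a segment configuration as a word, carrying a coinversion-type statistic that records $\quinv_{\mathrm{loc}}$ — the degenerate triples inside the segment are literally a $\coinv$-count on $w$ with respect to $<_2$ (with the convention that a repeated barred letter counts) — and then exhibit a weight-preserving bijection between the relevant tableaux and a suitable family of words. In \cref{sec:bijwords} one proves the ensuing identity on words: \cref{prop:coinv-gf} turns the sum of $\coinv$ over rearrangements into a $q$-multinomial coefficient, and then the $q$-binomial theorem \cref{prop:qbinom}, the $q$-Chu--Vandermonde identity \cref{thm:q-chuvan}, its dual form \cref{cor:q-dualchu}, and the telescoping identity \cref{prop:q-telescope} drive the cancellation. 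The mechanism is that summing over the barred/unbarred choice of the value-$a$ entries of the segment produces an alternating sign, and resumming the coinversion generating function against that sign — after interleaving the value-$a$ positions among the larger ones — leaves a vanishing $q$-series, e.g.\ a difference $q^{N}-q^{N}$ supplied by \cref{cor:q-dualchu}.

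\textbf{Main obstacle.} The grouping above is routine; the difficulty is entirely in the last paragraph. First, disentangling $(p+\maj)_{\mathrm{loc}}$ and $\quinv_{\mathrm{loc}}$ from the frozen cells of row $r-1$ and from the taller columns on the left, so that what remains is a clean coinversion statistic on a word, requires an extended case analysis under the ordering $<_2$ (the content of \cref{sec:bijPQT}). Second, the resulting $q$-series identity is delicate: the interleaving of the value-$a$ positions among the larger entries is exactly what forces the use of the \emph{dual} $q$-Chu--Vandermonde identity rather than the ordinary one, and it is here that no straightforward sign-reversing involution is visible — which is why we are content with an existence statement for $\Theta$.
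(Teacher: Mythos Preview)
Your reduction to a local sum is the flaw. You propose to fix ``all of the data of $\sigma$ outside the degenerate segment'' and then show that the signed sum over the remaining segment configurations vanishes. This finer claim is false. Take $\lambda$ with two columns of height $r\geq 2$, distinguished label $a=1$, row $r{-}1$ entries fixed as $(c,d)=(1,2)$, and segment absolute-value multiset $\{1,2\}$. Summing $(-1)^{m}q^{p+\maj}t^{\quinv}$ over the eight segment configurations $(s_1,s_2)\in\{1,\bar 1,2,\bar 2\}^2$ with $\{|s_1|,|s_2|\}=\{1,2\}$ gives $q^2(1-t)(q-t)$, not zero. The culprit is exactly the interaction you flagged: the triple $((r,1),(r-1,1),(r-1,2))$ together with the descent at $(r,1)$ depends on which absolute value sits over $c=1$, so $p+\maj$ is not constant on the class (it is $3$ for $s_1=2$ and $2$ otherwise), and no $\coinv$-type statistic on the segment word alone can absorb this.

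The paper's argument does not attempt this localisation. Instead it introduces the column-swapping operators $\tau_j$ (\cref{def:tau}), which may swap entries in rows \emph{below} the segment within the segment columns; \cref{lem:tau:maj,lem:tau:quinv} show these preserve $\maj$ and control $\quinv$ precisely. The word bijection $\phi$ of \cref{thm:invol} (proved by the $q$-series of \cref{sec:bijwords}) governs only the $\quinv'$ coming from the degenerate triples \emph{inside} the segment --- it knows nothing about row $r{-}1$ --- and the $\tau_j$'s are what carry that word-level move to the tableau while fixing the contribution of all non-degenerate triples. So the cancellation in \cref{prop:Phi-degenerate} is genuinely non-local in your sense: $\Theta$ mixes classes with different sub-segment column contents, and your grouping cuts across the pairs that actually cancel. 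To repair your approach you would either have to enlarge the ``segment'' to include the full segment columns (at which point you are essentially rebuilding the $\tau_j$ machinery) or prove a much more intricate identity that tracks the fixed row-$r{-}1$ data; neither is what \cref{sec:bijwords} does.
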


The outline of our proof is as follows. Let $a$ be the distinguished label of a $\Phi$-degenerate filling $\sigma\in\widetilde{\PQT}$. We will show in \cref{thm:Wbijection} that there exists a $\quinv$-preserving map $\phi:\widetilde{\PQT}\rightarrow \widetilde{\PQT}$ that preserves the content of the filling in absolute value (in particular $\phi$ preserves the content in all the letters that are not $a$ or $\bar{a}$), such that $\phi(\sigma)$ has exactly one more or one less $\bar{a}$ as $\sigma$. Next in \cref{def:tau} we define entry-swapping operators $\{\tau_j\}$ on $\widetilde{\PQT}$ that swap the topmost entries in columns $j,j+1$ (i.e. they swap entries in the degenerate word) while preserving the $\maj$ and $\quinv$ in the rest of the filling (\cref{lem:tau:maj,lem:tau:quinv}). There is then a unique product of operators $\tau_w=\tau_{i_1}\tau_{i_2}\ldots\tau_{i_s}$ that can be applied to $\Phi_u(\sigma)$ such that
\begin{itemize}
\item $u$ is a cell from the degenerate word that is designated by $\phi$,
\item the degenerate word of $\tau_w(\Phi_u(\sigma))$ matches $\phi(w)$, and 
\item $\hat{\sigma} = \tau_w(\Phi_u(\sigma))$ satisfies 
\[q^{p(\hat{\sigma})+\maj(\hat{\sigma})}t^{\quinv(\hat{\sigma})} = q^{p(\sigma)+\maj(\sigma)}t^{\quinv(\sigma)}.\]
\end{itemize}

\subsection{The operators $\tau_j$}

We first define the entry-swapping operator $\tau_j$ that swaps the topmost entries (if they are distinct) in columns $j,j+1$ of equal height while preserving certain statistics. This operator is modified for use in our setting from its original definition in \cite{LoehrNiese}.

\begin{defn}[Definition of the operator $\tau_j$]\label{def:tau}

Suppose two columns $j$ and $j+1$ have equal height, i.e.\
$\lambda_j=\lambda_{j+1}=k$. We define an operator $\tau_j$ which
exchanges contents of certain cells between columns $j$ and $j+1$. 

Write $\sigma(r,j)=a_r$ and $\sigma(r,j+1)=b_r$ for $r=1,\dots, k$. 

Define $r_{\max}$ to be the largest $r\in\{2,\dots,k\}$ with the following property:
either $(a_r, a_{r-1}, b_{r-1})$ and $(b_r, a_{r-1}, b_{r-1})$ are both in $\cQ$,
or both are not in $\cQ$. (That is, we look for the largest value of $r$ 
such that exchanging the entries $a_r$ and $b_r$
makes no difference to whether $((r,j), (r-1,j), (r-1, j+1))$ is a quinv triple.)
If there is no such $r$, let $r_{\max}=1$. 

Now let the operator $\tau_j$ swap the entries between columns $j$ and $j+1$
in rows $i$ with $r_{\max}\leq i\leq k$; i.e.\ for those values of $i$,
\[
\tau_j(\sigma)(i,j)=b_i,
\qquad 
\tau_j(\sigma)(i,j+1)=a_i,
\]
while all other entries are the same in $\sigma$ and in $\tau_j(\sigma)$,
as in the picture below, where we denote $\ell:=r_{\max}$.

\begin{equation*}
\renewcommand{\arraystretch}{1}  
\setlength{\arraycolsep}{1pt} 
\begin{array}{l|c|c|}
\multicolumn{1}{c}{}&
\multicolumn{2}{c}{\sigma}
\\
\cline{2-3} 
\text{row } k &  a_k & b_k\\
 \cline{2-3} 
\,\,\,\vdots & \vdots & \vdots \\
\cline{2-3}
\text{row } \ell & a_{\ell} & b_{\ell}\\
\cline{2-3} 
\text{row } \ell-1\,\,\,\,\,\, & a_{\ell-1} & b_{\ell-1}\\
 \cline{2-3}
\,\,\,\vdots & \vdots & \vdots \\
 \cline{2-3}
 \multicolumn{1}{c}{}&
\multicolumn{1}{c}{\scriptstyle j}&
\multicolumn{1}{c}{\scriptstyle j+1}
\end{array}
\,\,\xrightarrow{\makebox[1.5cm]{$\tau_j$}}\,\,
\begin{array}{|c|c|}
\multicolumn{2}{c}{\tau_j(\sigma)}
\\
\hline
b_k &  a_k\\
\hline
\vdots & \vdots \\
\hline
b_{\ell} & a_{\ell}\\
\hline
a_{\ell-1} & b_{\ell-1}\\
\hline
\vdots & \vdots \\
\hline
\multicolumn{1}{c}{\scriptstyle j}&
\multicolumn{1}{c}{\scriptstyle j+1}
\end{array}
\end{equation*}

Put another way: we swap the pair in row $k$, and iteratively, if we have swapped the pair in row $i$ and this made a difference to whether the triple $(i,j), (i-1, j), (i-1, j+1)$ is a quinv
triple, then we swap in row $i-1$ also. 
\end{defn}

\begin{example}\label{ex:tau}
Suppose $\sigma$ has columns $j,j+1$ as shown below. Then $k=5$ and $r_{\max}=3$ since both $(2,3,4)\in\cQ$ and $(3,3,4)\in\cQ$. Thus applying the operator $\tau_j$ gives the following. The cells whose content was swapped are shown in grey.

\begin{equation*}
\renewcommand{\arraystretch}{1}  
\setlength{\arraycolsep}{1pt} 
\begin{array}{|c|c|}
\multicolumn{2}{c}{\sigma}
\\
\hline
3 &  4\\
\hline
2 & 3  \\
\hline
2 & 3 \\
\hline
3 & 4\\
\hline
1 & 3 \\
\hline
\multicolumn{1}{c}{\scriptstyle \phantom{+} j\phantom{1}}&
\multicolumn{1}{c}{\scriptstyle j+1}
\end{array}
\,\,\xrightarrow{\makebox[1.5cm]{$\tau_j$}}\,\,
\begin{array}{|c|c|}
\multicolumn{2}{c}{\tau_j(\sigma)}
\\
\hline
\cellcolor{gray!50}4 & \cellcolor{gray!50} 3\\
\hline
\cellcolor{gray!50}3 & \cellcolor{gray!50}2  \\
\hline
\cellcolor{gray!50}3 & \cellcolor{gray!50}2 \\
\hline
3 & 4\\
\hline
1 & 3 \\
\hline
\multicolumn{1}{c}{\scriptstyle \phantom{+} j\phantom{1}}&
\multicolumn{1}{c}{\scriptstyle j+1}
\end{array}
\end{equation*}

\end{example}

It is straightforward to check by the definition that $\tau_j$ is an involution. Proving the following \cref{lem:tau:maj,lem:tau:quinv} regarding the properties of the action of $\tau_j$ on super fillings $\widetilde{\PQT}$ is also straightforward, but technical.

\begin{lemma}\label{lem:tau:maj}
Let $\sigma\in\widetilde{\PQT}$ and let $j$ be such that the columns $j,j+1$ are of equal height $k$. Then 
\begin{equation}
\maj(\tau_j(\sigma))=\maj(\sigma).
\end{equation}
\end{lemma}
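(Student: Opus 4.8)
The plan is to track precisely which cells can change their descent status under the action of $\tau_j$. Write $\ell = r_{\max}$, so that $\tau_j$ swaps the entries of columns $j$ and $j+1$ in exactly the rows $\ell, \ell+1, \dots, k$, and leaves all other entries untouched. Since $\maj(\sigma) = \sum_{x : (x,\South(x))\in\Des(\sigma)}(\leg(x)+1)$ and the leg of a cell depends only on $\lambda$ (not on $\sigma$), it suffices to show that the multiset of legs of descent-cells is unchanged; in fact I will show the stronger statement that within columns $j$ and $j+1$, the \emph{pair} of descent statuses at each row $r$ is simply swapped between the two columns, and that no descent outside these two columns is affected. The latter point is immediate: a descent involving a cell $x$ and $\South(x)$ only depends on the entries of those two cells, and the only cells changed lie in columns $j,j+1$, so the only vertical pairs that can be affected are those entirely within column $j$ or column $j+1$.

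First I would handle the rows $r$ with $\ell+1 \le r \le k$ and also the rows strictly below $\ell-1$: in these ranges, either both cells of the vertical pair $((r,j),(r-1,j))$ were swapped, or neither was. If neither was swapped, nothing changes. If both were swapped, then in column $j$ the pair of entries $(a_r, a_{r-1})$ is replaced by $(b_r, b_{r-1})$, while in column $j+1$ the pair $(b_r, b_{r-1})$ is replaced by $(a_r, a_{r-1})$; so whether $(r,j)$ is a descent of $\tau_j(\sigma)$ equals whether $(r,j+1)$ was a descent of $\sigma$, and vice versa. Thus across the two columns the contribution to $\maj$ (which only sees legs, equal in columns $j$ and $j+1$ since $\lambda_j = \lambda_{j+1} = k$) is preserved. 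The one row that needs genuine attention is $r = \ell$: here $(r,j) = (\ell,j)$ \emph{was} swapped but $\South((\ell,j)) = (\ell-1,j)$ was \emph{not}. So in column $j$ the vertical pair changes from $(a_\ell, a_{\ell-1})$ to $(b_\ell, a_{\ell-1})$, and in column $j+1$ from $(b_\ell, b_{\ell-1})$ to $(a_\ell, b_{\ell-1})$. I would appeal to the defining property of $r_{\max}$: by definition of $\ell = r_{\max}$, exchanging $a_\ell$ and $b_\ell$ makes no difference to whether $((\ell,j),(\ell-1,j),(\ell-1,j+1))$ is a quinv triple, i.e.\ $(a_\ell, a_{\ell-1}, b_{\ell-1})$ and $(b_\ell, a_{\ell-1}, b_{\ell-1})$ are either both in $\cQ$ or both not in $\cQ$. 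Translating the membership condition \eqref{cQdef} into descent language — $(\alpha,\beta,\gamma)\in\cQ$ iff exactly one of ``$\alpha>\beta$'', ``$\gamma \le \beta$ is false'' (i.e.\ $(y,z)$ not an attacking inversion), ``$\alpha \le \gamma$'' holds, and using that $\beta = a_{\ell-1}$, $\gamma = b_{\ell-1}$ are fixed — I would extract that swapping $a_\ell \leftrightarrow b_\ell$ preserves the value of ``$a_\ell > a_{\ell-1}$'' XOR-adjusted appropriately; more directly, one checks case by case that the quinv-invariance of row $\ell$ forces exactly the statement that the descent status of $(\ell,j)$ in $\tau_j(\sigma)$ together with that of $(\ell,j+1)$ in $\tau_j(\sigma)$ is the swap of the two in $\sigma$. (Intuitively: the condition ``swapping $a_\ell,b_\ell$ doesn't change the quinv triple at row $\ell$'' is exactly the condition that $a_\ell$ and $b_\ell$ compare the same way to the entry below, which is what controls the descent.)

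The main obstacle is this last step at row $\ell$: one must verify that the slightly asymmetric situation (top cell swapped, bottom cell fixed) still yields a net-zero change in $\maj$, and the only leverage is the quinv-invariance built into the definition of $r_{\max}$. I expect this requires a short but careful case analysis translating the $\cQ$-membership condition \eqref{cQdef} for the triples $(a_\ell, a_{\ell-1}, b_{\ell-1})$ and $(b_\ell, a_{\ell-1}, b_{\ell-1})$ into statements about the relative order of $a_\ell$ vs.\ $a_{\ell-1}$ and $b_\ell$ vs.\ $a_{\ell-1}$ (for column $j$) and of $a_\ell$ vs.\ $b_{\ell-1}$ and $b_\ell$ vs.\ $b_{\ell-1}$ (for column $j+1$) — noting that in the super setting all comparisons are via $I_2$, not naive $<$. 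Because the entries $a_{\ell-1}, b_{\ell-1}$ of the bottom row are held fixed while only the top entries move, one shows that the quantity ``$I_2(a_\ell, a_{\ell-1}) + I_2(b_\ell, b_{\ell-1})$'' equals ``$I_2(b_\ell, a_{\ell-1}) + I_2(a_\ell, b_{\ell-1})$'' precisely under the $r_{\max}$ condition, and since columns $j$ and $j+1$ have identical legs, the equality of $\maj$ follows. Everything else is bookkeeping.
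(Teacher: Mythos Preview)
Your overall approach matches the paper's: reduce to the contribution of columns $j,j+1$, note that rows where both or neither cell is swapped are trivial, and then handle the single ``mixed'' row $\ell = r_{\max}$ via a case analysis using the defining property of $r_{\max}$.

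However, your ``stronger statement'' that the pair of descent statuses at each row is \emph{simply swapped} between the two columns is false at row $\ell$. In the paper's notation $a=a_\ell,\ b=b_\ell,\ c=a_{\ell-1},\ d=b_{\ell-1}$: take the case $I(d,c)=0$ with both $(a,c,d)$ and $(b,c,d)$ in $\cQ$. The paper's analysis gives $I(a,c)=I(b,c)=0$ and $I(a,d)=I(b,d)=1$, so the descent pair at row $\ell$ is $(0,1)$ before \emph{and} $(0,1)$ after --- not the swap $(1,0)$. (The paper itself remarks that ``the locations of the descents may change after applying $\tau_j$'', confirming the descent set is not simply permuted.) Your intuitive gloss --- that the $r_{\max}$ condition means ``$a_\ell$ and $b_\ell$ compare the same way to the entry below'' --- is likewise not quite right in all cases.

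Fortunately, in your final paragraph you land on the correct target: what must be shown is the equality of \emph{sums}
\[
I(a,c)+I(b,d)\;=\;I(b,c)+I(a,d),
\]
which (since $\lambda_j=\lambda_{j+1}$ gives equal legs) is exactly what preserves $\maj$. This is precisely what the paper proves via a short case split on $I(d,c)$ and on whether both triples lie in $\cQ$ or not. So the plan is sound once you drop the ``swap'' overclaim; just be aware that the case analysis is what does the work, not the intuitive heuristic.
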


\begin{lemma}\label{lem:tau:quinv}
Let $\sigma\in\widetilde{\PQT}$ and let $j$ be such that the columns $j,j+1$ are of equal height $k$. Let $x=(k,j)$ and $y=(k,j+1)$ be the cells at the tops of columns $j$ and $j+1$. Then
\begin{equation}
\quinv(\tau_j(\sigma))=\quinv(\sigma) + \begin{cases} 1,& \sigma(x)>\sigma(y),\\
-1,&\sigma(x)<\sigma(y),\\
0,&\sigma(x)=\sigma(y). \end{cases}
\end{equation}
\end{lemma}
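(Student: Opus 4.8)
The plan is to compute $\quinv(\tau_j(\sigma))-\quinv(\sigma)$ by going through the triples of $\dg(\lambda)$ one at a time: only a triple having at least one cell in the \emph{swapped region} --- rows $r_{\max},\dots,k$ of columns $j$ and $j{+}1$ --- can change its status, so I would first sort these triples into three families. Family (a): \emph{internal} triples, whose three cells all lie in columns $j,j{+}1$; these are $T_r=\bigl((r,j),(r{-}1,j),(r{-}1,j{+}1)\bigr)$ for $2\le r\le k$ together with the degenerate triple $\bigl((k,j),(k,j{+}1)\bigr)$. Family (b): triples whose \emph{rightmost} cell lies in column $j$ or $j{+}1$ and whose other cells lie in some column $i<j$. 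Family (c): triples whose top and bottom-left cells lie in column $j$ or $j{+}1$ and whose rightmost cell lies in some column $j'>j{+}1$. (Degenerate analogues of (b) and (c) are handled the same way.) Throughout I would use the elementary identity $\mathbf 1[(X,p,q)\in\cQ]+\mathbf 1[(X,q,p)\in\cQ]=\mathbf 1[p\ne q]$, valid for all entries $X,p,q$ and proved by a short check of the orderings.

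Family (b) contributes $0$: each triple with rightmost cell $(r,j)$ is paired with the triple having the same other two cells and rightmost cell $(r,j{+}1)$; since column $i$ is untouched, $\tau_j$ merely exchanges the two members of the pair $\{\sigma(r,j),\sigma(r,j{+}1)\}$ between the two triples, so the number of quinv triples in the pair is unchanged. For family (a): if $r<r_{\max}$ the triple $T_r$ lies below the swapped region and is unchanged; if $r=r_{\max}$ it is unchanged by the defining property of $r_{\max}$ (when $r_{\max}=1$ there is no such $T_r$); and if $r>r_{\max}$ I would argue it is still unchanged. Indeed, since $r_{\max}$ is the \emph{largest} row with the defining property, row $r$ fails it; but if $a_{r-1}=b_{r-1}$ then both $(a_r,a_{r-1},b_{r-1})$ and $(b_r,a_{r-1},b_{r-1})$ lie outside $\cQ$ and the property would hold, so $a_{r-1}\ne b_{r-1}$, and two applications of the complementarity identity give $\mathbf 1[(b_r,b_{r-1},a_{r-1})\in\cQ]=1-\mathbf 1[(b_r,a_{r-1},b_{r-1})\in\cQ]=\mathbf 1[(a_r,a_{r-1},b_{r-1})\in\cQ]$. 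Hence the entire internal contribution comes from the degenerate triple $\bigl((k,j),(k,j{+}1)\bigr)$, which is a quinv triple iff $\sigma(x)<\sigma(y)$ in $\sigma$ and iff $\sigma(y)<\sigma(x)$ in $\tau_j(\sigma)$, contributing $+1$, $-1$, or $0$ according to the sign of $\sigma(x)-\sigma(y)$.

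Finally, family (c) contributes $0$. Pair $\bigl((r{+}1,j),(r,j),(r,j')\bigr)$ with $\bigl((r{+}1,j{+}1),(r,j{+}1),(r,j')\bigr)$. If $r\ge r_{\max}$ or $r{+}1<r_{\max}$, the two cells of each left part lie either both inside or both outside the swapped region, so as in family (b) the pair's quinv count is unchanged. The only remaining case is $r=r_{\max}-1$, where the top cell of each left part is swapped but the bottom-left cell is not. Writing $\alpha=\sigma(r_{\max},j)$, $\alpha'=\sigma(r_{\max},j{+}1)$, $m=\sigma(r_{\max}{-}1,j)$, $m'=\sigma(r_{\max}{-}1,j{+}1)$ and $e=\sigma(r_{\max}{-}1,j')$, the change from such a pair equals
\[
\bigl(\mathbf 1[(\alpha',m,e)\in\cQ]-\mathbf 1[(\alpha,m,e)\in\cQ]\bigr)+\bigl(\mathbf 1[(\alpha,m',e)\in\cQ]-\mathbf 1[(\alpha',m',e)\in\cQ]\bigr),
\]
and I would show this is $0$ for every $e$, using the defining property of $r_{\max}$ (equivalently $\mathbf 1[(\alpha,m,m')\in\cQ]=\mathbf 1[(\alpha',m,m')\in\cQ]$), the complementarity identity, and the description of $\cQ$-membership as membership of the first coordinate in a half-open interval determined by the last two. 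Summing families (a), (b), (c) then gives the asserted formula.

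I expect this last step --- the vanishing of the boundary term in family (c) --- to be the main obstacle, as it is the only place where the cancellation is not simply ``$\tau_j$ permutes the contents within a matched pair of triples'' and where the defining property of $r_{\max}$ is genuinely needed; it comes down to a finite but somewhat fiddly case analysis on the relative orders of $\alpha,\alpha',m,m',e$.
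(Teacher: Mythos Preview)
Your plan is correct and matches the paper's proof almost step for step: the same three families of triples, the same pairing argument for family~(b) and the non-boundary part of family~(c), and the same identification of the row $r_{\max}-1$ pair in family~(c) as the place where a genuine case analysis is needed (the paper carries this out via two tables on $I(a,c),I(a,d),I(b,c),I(b,d),I(f,c),I(f,d)$). Your complementarity identity $\mathbf 1[(X,p,q)\text{ quinv}]+\mathbf 1[(X,q,p)\text{ quinv}]=\mathbf 1[p\ne q]$ is a clean way to justify that the internal triples $T_r$ with $r>r_{\max}$ keep their status --- the paper simply asserts this ``by construction'' --- and it does hold for super fillings, as one checks from the $I$-definition. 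One small slip: in the super setting, if $a_{r-1}=b_{r-1}\in\mathbb{Z}_-$ then $(X,a_{r-1},b_{r-1})$ is a quinv triple for \emph{every} $X$, not ``outside $\cQ$''; but since both $(a_r,\dots)$ and $(b_r,\dots)$ then have the same status, your conclusion $a_{r-1}\ne b_{r-1}$ still follows.
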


\begin{remark}
We point out that our definition of $\tau_j$ is very similar to that of a similar operator in \cite[Definition 3.7]{compactformula}, with two important differences. First, $\tau_j$ necessarily swaps the topmost entries in the columns if they are different and acts trivially if they are identical, instead of swapping the first pair of non-identical entries (from the bottom) as in \cite{compactformula}. Second, in our case swaps propagate downwards from row $k$, whereas in the corresponding operator defined in \cite{compactformula}, the swaps propagate upwards. This has to do with the fact that a $\quinv$ triple has configuration 
\[
\qtrip{\ }{\ }{\ }
\]
whereas the corresponding triple studied in \cite{compactformula} has configuration 
\[
\begin{tikzpicture}[scale=0.5]\cell10{\ }\cell20{\ } \cell1{2.7}{\ }
\node at (1,-0.5) {$\cdots$};
\end{tikzpicture}.
\]
\end{remark}

\begin{proof}[Proof of \cref{lem:tau:maj}] The proof is almost identical to that of \cite[Lemma 3.10]{compactformula}, but we present it here as a warm-up for \cref{lem:tau:quinv}, in the more nuanced setting of super fillings.

If $k=1$, there is no contribution to $\maj$ from columns $j,j+1$, so assume $k>1$. Consider the entries in columns $j,j+1$ in the two rows $\ell,\ell-1$, such that $1 < \ell \leq k$, as shown below.

\begin{center}
\begin{tikzpicture}[scale=.5]
\cell01{$a$} \cell02{$b$}
\cell11{$c$} \cell12{$d$}

\node at (-2.65,.5) {\rm row $\ell$};
 \node at (-2,-.5) {\rm row $\ell-1$};
\node at (.5, 1.5) {{\tiny$j$}};
\node at (2, 1.5) {{\tiny$j+1$}};
\end{tikzpicture}
\end{center}

If $a,b$ are not swapped by $\tau_j$, there is nothing to check. If both $a,b$ and $c,d$ are swapped by $\tau_j$, the contribution to $\maj$ from the cells $(\ell,j),(\ell,j+1)$ remains the same. Thus the only case we need to check is when $\ell=r_{\max}$ so that the swapping procedure terminates at row $\ell$ and $a,b$ are swapped, but $c,d$ are not. Thus assume $\tau_j$ sends \raisebox{.1in}{\tableau{a&b\\c&d}} to  \raisebox{.1in}{\tableau{b&a\\c&d}}.

Since $\ell=r_{\max}$, we have that the triples $(a,c,d)$ and $(b,c,d)$ are are either both in $\cQ$, or neither is in $\cQ$. Following the condition in \cref{def:super} of a triple to be in $\cQ$, let us consider the sets
\[ 
ACD = \left\{\begin{matrix}I(a,c)=1\\I(a,d)=0\\I(d,c)=0 \end{matrix} \right\} 
\quad \mbox{and} \quad 
BCD = \left\{\begin{matrix}I(b,c)=1\\I(b,d)=0\\I(d,c)=0 \end{matrix} \right\},
\]
where we know that either both $ACD$ and $BCD$ contain exactly one condition that is true, or they both contain exactly two that are true. Recall that a pair of cells $u,v$ in $\sigma$ with $v=\South(u)$ forms a descent if $I(\sigma(u),\sigma(v))=1$. We consider two cases based on whether or not $I(d,c)=0$.

\emph{Case 1: Suppose $I(d,c)=0$.} If the two triples are in $\cQ$, both of the remaining conditions in $ACF$ and $BCF$ must be false, and so we must have $I(a,c)=I(b,c)=0$ and $I(a,d)=I(b,d)=1$. This implies the cells $(\ell,j),(\ell,j+1)$ make the same contribution to $\maj$ in $\sigma$ and $\tau_j(\sigma)$. On the other hand, if the two triples are not in $\cQ$, exactly one of the two remaining conditions in both $ACD$ and $BCD$ must be true, and so we must have $I(a,c)=I(a,d)$ and $I(b,c)=I(b,d)$. This also results in the same contribution to $\maj$ from the cells $(\ell,j),(\ell,j+1)$ in $\sigma$ and $\tau_j(\sigma)$.

\emph{Case 2: Suppose $I(d,c)=1$.} This case is similar. If the two triples are in $\cQ$, we must have $I(a,c)=I(a,d)$ and $I(b,c)=I(b,d)$ since exactly one of the two remaining conditions must be true. If the two triples are not in $\cQ$, both of the two remaining conditions must be true, and so we must have $I(a,c)=I(b,c)=1$ and $I(a,d)=I(b,d)=0$. Both cases result in the same contribution to $\maj$ from the cells $(\ell,j),(\ell,j+1)$.

We see finally that even though the locations of the descents may change after applying $\tau_j$, the total contribution to $\maj$ in columns $j,j+1$ remains constant.
\end{proof}

\begin{proof}[Proof of \cref{lem:tau:quinv}]
The proof follows by a similar argument as that of \cite[Lemma 3.11]{compactformula}, though the details are a bit nuanced when we work with super fillings. 

Consider the columns $j,j+1$ of $\sigma$ shown below with $x=(k,j)$, $y=(k,j+1)$, and $a_k=\sigma(x)$, $b_k=\sigma(y)$. Assume $a_k\neq b_k$, since otherwise $\tau_j$ is trivial. Let $\ell:=r_{\max}$, so $\tau_j$ swaps all pairs of entries in rows $j,j+1$ from row $k$ down to row $\ell$. If $\ell=1$, then the columns $j,j+1$ simply switched places, which trivially proves the claim. Thus assume $\ell>1$, and let $c,d$ be the cells in row $\ell-1$ in columns $j,j+1$ respectively. 

\begin{center}
\begin{tikzpicture}[scale=0.55]
\node at (2,1.7){$\sigma$};
\node at (9,1.7){$\tau_j(\sigma)$};
\node at (-3.05,0.5){row $k$};
\node at (-3.1,-1.5){row $\ell$};
\node at (-2.5,-2.5){row $\ell-1$};
\draw[->](5,-1)--(6.5,-1) node[midway,above] {$\tau_j$};
\cell01{$a_k$}\cell02{$b_k$}\cell0{4.5}{$f_1$}
\cell11{$\vdots$}\cell12{$\vdots$}\cell1{4.5}{$\vdots$}
\cell21{$a_{\ell}$}\cell22{$b_{\ell}$}\cell2{4.5}{$f_{\ell}$}
\cell31{$c$}\cell32{$d$}\cell3{4.5}{$f$}

\cell08{$b_k$}\cell0{9}{$a_k$}\cell0{11.5}{$f_1$}
\cell18{$\vdots$}\cell1{9}{$\vdots$}\cell1{11.5}{$\vdots$}
\cell28{$b_{\ell}$}\cell2{9}{$a_{\ell}$}\cell2{11.5}{$f_{\ell}$}
\cell38{$c$}\cell3{9}{$d$}\cell3{11.5}{$f$}

\node at (0.5,-3.5){{\tiny $j$}};
\node at (2,-3.5){{\tiny $j+1$}};
\node at (4,-3.5){{\tiny $s$}};
\node at (7.5,-3.5){{\tiny $j$}};
\node at (9,-3.5){{\tiny $j+1$}};
\node at (11,-3.5){{\tiny $s$}};
\end{tikzpicture}
\end{center}

Since only columns $j,j+1$ are affected by $\tau_j$, we only need to examine the contribution in $\quinv(\sigma)$ and $\quinv(\tau_j(\sigma))$ of triples with at least one entry in one of those columns. By construction, with the exception of the degenerate triple formed by the entries $(a_k,b_k)$, all triples in $\sigma$ in columns $j,j+1$ make the same contribution in $\quinv(\sigma)$ as the corresponding triples in the same locations of $\tau_j(\sigma)$ do in $\quinv(\tau_j(\sigma))$. Moreover, triples containing cells in columns to the left of $j$ will not be affected either, nor will any triples with all cells below row $\ell$. It remains for us to examine the triples containing cells in rows greater than or equal to $\ell-1$ in one of the columns $j,j+1$, and a cell in a column $s$ for some $s>j+1$. Both $\sigma$ and $\tau_j(\sigma)$ have the triples $((i+1,j),(i,j),(i,s))$ and $((i+1,j+1),(i,j+1),(i,s))$ with the same contents $(a_{i+1},a_i,f_i)$ and $(b_{i+1},b_i,f_i)$ for $\ell\leq i \leq k$ (per our convention we set $\sigma(a_{k+1})=\sigma(b_{k+1})=0$). Therefore the only triples whose content was altered are the triples $((\ell,j),(\ell-1,j),(\ell-1,s))$ and $((\ell,j+1),(\ell-1,j+1),(\ell-1,s))$ with respective contents $(a_{\ell},c,f)$ and $(b_{\ell},d,f)$ in $\sigma$, which ended up with respective content $(b_{\ell},c,f)$ and $(a_{\ell},d,f)$ in $\tau_j(\sigma)$. We will now check that the total number of $\quinv$ triples is preserved in each of these pairs. To simplify notation, we will drop the subscripts and write $a=a_{\ell}$ and $b=b_{\ell}$.

We begin with a key observation. Since $\ell=r_{\max}$, we have that either both $(a,c,d)$ and $(b,c,d)$ are in $\cQ$, or neither is in $\cQ$. Equivalently, this means that in the sets
\[ 
ACD = \left\{\begin{matrix}I(a,c)=1\\I(a,d)=0\\I(d,c)=0 \end{matrix} \right\} 
\quad \mbox{and} \quad 
ABD = \left\{\begin{matrix}I(b,c)=1\\I(b,d)=0\\I(d,c)=0 \end{matrix} \right\},
\]
either exactly one condition is true in both, or exactly two are true in both. By studying the cases in the proof of \cref{lem:tau:maj}, we get that either both $I(a,c)=I(a,d)$ and $I(b,c)=I(b,d)$, or both $I(a,c)\neq I(a,d)$ and $I(b,c)\neq I(b,d)$. 

Let us now consider the triples $(a,c,f)$, $(b,d,f)$, and $(b,c,f)$, $(a,d,f)$, respectively corresponding to the sets
{\scriptsize 
\[ 
ACF = \left\{\begin{matrix}I(a,c)=1\\I(a,f)=0\\I(f,c)=0 \end{matrix} \right\},\  
BDF = \left\{\begin{matrix}I(b,d)=1\\I(b,f)=0\\I(f,d)=0 \end{matrix} \right\} 
\; \mbox{v/s} \;
BCF = \left\{\begin{matrix}I(b,c)=1\\I(b,f)=0\\I(f,c)=0 \end{matrix} \right\},\ 
ADF = \left\{\begin{matrix}I(a,d)=1\\I(a,f)=0\\I(f,d)=0 \end{matrix} \right\}.
\]
}

We split our argument into two cases. To eliminate extra notation, let us identify the triples with their corresponding sets. 

In the first case, let $I(a,c)=I(a,d)$ and $I(b,c)=I(b,d)$. If $I(f,c)=I(f,d)$, then $ACF$ (\emph{resp.} $BDF$) has the same number of true conditions as $ADF$ (\emph{resp.} $BCF$), which means the pairs make the same contribution to $\quinv(\sigma)$ as to $\quinv(\tau_j(\sigma))$. Thus let us assume $I(f,c)\neq I(f,d)$. In the table below, we examine the cases under the condition that there is no contradiction in either $ACF$ or $ADF$ arising from the latter having either all conditions true or all conditions false. 
Our notation is as follows: when we write, say, $\text{Set} \Rightarrow X$, this means that $X$ must be true in order to avoid a contradiction in $\text{Set}$.

\begin{center}
{\renewcommand{\arraystretch}{1.2}
\flushleft
\footnotesize
\begin{tabular}{l|c|c|}
& $I(f,c)=0$ and $I(f,d)=1$ & $I(f,c)=1$ and $I(f,d)=0$\\ \hline
$I(a,c)=I(a,d)=1$ & $ACF \Rightarrow I(a,f)=1$ & $ADF \Rightarrow I(a,f)=1$ \\ \hline
$I(a,c)=I(a,d)=0$ & $ADF \Rightarrow I(a,f)=0$ & $ACF \Rightarrow I(a,f)=0$\\ \hline

\end{tabular}
}
\end{center}
 
This table implies that necessarily, $I(a,c)=I(a,d)=I(a,f)$.
An analogous argument yields that necessarily, $I(b,d)=I(b,f)=I(b,c)$. From here it is straightforward to check that for any choice of $I(f,c)$ and $I(f,d)$, $ACF$ and $BCF$ make the same contribution to $\quinv(\sigma)$, as BDF and ADF to $\quinv(\tau_j(\sigma))$, and so the total contribution of the pair $ACF, BDF$ to $\quinv(\sigma)$ matches the contribution of the pair $BCF, ADF$ to $\quinv(\tau_j(\sigma))$.  

In the second case, let $I(a,c)\neq I(a,d)$ and $I(b,c)\neq I(b,d)$. Because $\maj$ is preserved, we must also have that $I(a,c)\neq I(b,d)$, implying that $I(a,c)=I(b,c)$ and $I(b,d)=I(a,d)$. We must also consider all choices for $I(f,c)$ and $I(f,d)$. We complete the argument by carefully considering all possible cases in the table below. We use some additional shorthand notation: when we write, say, $\{\cancel{ACF}, ADF\}$ this means $ACF$ is not a $\quinv$ triple, but $ADF$ is one. 

{\renewcommand{\arraystretch}{1.2}
\flushleft
\footnotesize
\begin{tabular}{l|c|c|c|c|}
& $I(f,c)=0,$ & $I(f,c)=0,$ & $I(f,c)=1,$ & $I(f,c)=1,$\\ 
& $I(f,d)=0$ & $I(f,d)=1$ & $I(f,d)=0$ & $I(f,d)=1$\\ \hline
$I(a,c)=1$, & $ACF \Rightarrow I(a,f)=1$ & $ACF \Rightarrow$ & $ACF=ADF$ & $ADF \Rightarrow I(a,f)=0$  \\
$I(b,c)=1$,& $\Rightarrow\{\cancel{ACF},\ ADF\}$ & $I(a,f)=1$, &dep.~on~$I(a,f)$ & $\Rightarrow\{\cancel{ACF}, ADF\}$  \\
$I(b,d)=0$, & $BCF \Rightarrow I(b,f)=1$ & contradicts & $BCF=BDF$  & $BDF \Rightarrow I(b,f)=0$  \\ 
$I(a,d)=0$& $\Rightarrow \{BDF,\ \cancel{BCF}\}$ & $ADF$ &dep.~on~$I(b,f)$ & $\Rightarrow \{BDF,\ \cancel{BCF}\}$  \\\hline
$I(a,c)=0$, & $ADF \Rightarrow I(a,f)=1$ & $ACF=ADF$ & $ACF \Rightarrow$ & $ACF \Rightarrow I(a,f)=0$  \\
$I(b,c)=0$,& $\Rightarrow \{ACF,\ \cancel{ADF}\}$ &dep.~on~$I(a,f)$ & $I(a,f)=0$, & $\Rightarrow \{ACF, \cancel{ADF}\}$  \\
$I(b,d)=1$, & $BDF \Rightarrow I(b,f)=1$ & $BCF=BDF$ & contradicts  & $BCF \Rightarrow I(b,f)=0$  \\ 
$I(a,d)=1$& $\Rightarrow\{\cancel{BDF},\ BCF\}$ &dep.~on~$I(b,f)$ & 
$ADF$ & $\Rightarrow\{\cancel{BDF},\ BCF\}$  \\\hline
\end{tabular}
}

\medskip
Again, in every possible case, the contribution of the pair $ACF, BDF$ to $\quinv(\sigma)$ matches the contribution of the pair $BCF, ADF$ to $\quinv(\tau_j(\sigma))$. Thus we conclude that the only triple whose contribution to $\quinv(\sigma)$ is different from its contribution to $\quinv(\tau_j(\sigma))$ is the degenerate triple $(x,y)$ at the top of the columns $j,j+1$. If $\sigma(x)=\sigma(y)$, $\tau_j(\sigma)=\sigma$. If $\sigma(x)\neq \sigma(y)$ and if $(x,y)$ was a degenerate $\quinv$ triple before the swap, swapping them will cause $\tau_i(\sigma)$ to lose one $\quinv$ triple, and gain one otherwise. 
\end{proof}

\begin{remark}\label{rem:PDS}
We will be working with permutations, and corresponding reduced expressions of those permutations to products of the operators $\tau_j$. Unfortunately, the operators do not satisfy braid relations, i.e. in general $\tau_j\tau_{j+1}\tau_j(\sigma) \neq \tau_{j+1}\tau_j\allowbreak \tau_{j+1}(\sigma)$. Thus we will need to choose a canonical reduced expression for a given permutation to make our construction well-defined. One way to do this uses the \emph{positive distinguished subexpression}, or PDS, of a reduced expression, defined in \cite{MarshRietsch}. We denote by $\PDS(\pi)$ the PDS corresponding to a permutation $\pi$. We will refer the reader to \cite[Section 3]{compactformula} for a detailed treatment of this notion; in our setting it suffices to assert that $\PDS(\pi)$ is a unique reduced expression for any permutation $\pi$.
\end{remark} 

\section{From a bijection on words to a bijection on 
super-fillings}
\label{sec:bijPQT}

In this section, we prove \cref{thm:Wbijection}, which states the existence of a bijection on the set of $\Phi$-degenerate fillings $\widetilde{\PQT}$, using the super-alphabet $\mathcal{A}$ and total ordering $<_2$ on that alphabet. \cref{thm:Wbijection} is needed in the proof of \cref{prop:Phi-degenerate}.

We begin with some definitions. Let $n \geq 3$ be a positive integer and let $\alpha = (\alpha_2,\dots, \alpha_{n-1})$ be a tuple of positive integers. 
Denote $|\alpha| = \sum_i \alpha_i$ and let $N > |\alpha|$ be a positive integer. For convenience, let $L = N - |\alpha|$.
We will consider words of length $N$ in the alphabet $[n] = \{1,\dots, n\}$ as follows. For $0 \leq k \leq L$, define
\begin{equation}
\label{wk-def}
W_k \equiv W^{(\alpha,L)}_k = \left\{ w = (w_1,\dots,w_N) \left| 
\begin{matrix}
c_n(w) = k, c_1(w) = L-k,  \\
c_i(w) = \alpha_i \text{ for } 2 \leq i \leq n-1
\end{matrix}  
\right. \right\},
\end{equation}
where $c_i(w)$ counts the number of times the letter $i$ appears in $w$.

\begin{example}
\label{eg:Wk}
For example, with $n = N = 3, L = 2$ and $\alpha = (1)$, we have
\begin{equation}
\label{Wk-eg}
\begin{split}
W_0 &= \{112, 121, 211\}, \\
W_1 &= \{123, 132, 213, 231, 312, 321 \}, \\
W_2 &= \{233, 323, 332 \}.
\end{split}
\end{equation}

\end{example}

Recall that a {\em coinversion} of a word $w$ is a pair $(i,j)$, $i < j$, such that $w_i < w_j$, and $\coinv(w)$ as the number of such pairs. In our applications, we will be mapping finite subsets of the super-alphabet $\mathcal{A}=\{1,2,\ldots,\bar{2},\bar{1}\}$ under total ordering $<_2$ to $\{1,2,\ldots,n\}$, such that the order is preserved. Fix $\ell$, 
$2 \leq \ell \leq n$. We will think of the letters $\ell, \dots, n$
as barred letters in $\mathcal{A}$.
For $w \in W_k$, define 
\begin{equation}
\label{word-quinv-def}
\quinv'(w) = \coinv(w) + \binom{\alpha_\ell}{2} + \cdots + \binom{\alpha_{n-1}}{2} +\binom{k}{2}.
\end{equation}
Thus if the preimage of $w$ in $\mathcal{A}$ corresponds to the degenerate word of some $\widetilde{\PQT}$ $\sigma$, $\quinv'(w)$ is equal to the contribution from the degenerate word to $\quinv(\sigma)$ (under the ordering $<_2$).

The proof of \cref{thm:Wbijection} relies on the existence of a $\quinv'$-preserving bijection $\phi$ on $W \equiv W^{(\alpha,L)}$ satisfying the desired properties. This result is stated below, but first we need some additional notation. For $0 \leq k \leq L$, partition $W_k$ from \eqref{wk-def} into two subsets. Let $W_k^\leq$ (resp. $W_k^>$) be those $w$ in $W_k$
whose position of the leftmost $n$ from the left is less than or equal to (resp. greater than) the position of the rightmost $1$ from the right ignoring all $n$'s. By convention, we will take $W_0 = W_0^>$ and $W_L = W_L^\leq$. For example, with $n = 3, N = 4, L = 3$ and $\alpha = (1)$,
\begin{align*}
W_2^\leq &= \{1 3 2 3 ,  1 3 3 2 ,  3 1 2 3,  3 1 3 2 ,  3 2 1 3 ,  3 2 3 1 ,  3 3 1 2 ,  3 3 2 1 \}, \\
W_2^> &= \{1233, 2133, 2313, 2331\}.
\end{align*}
Let $W^\leq = \bigcup_k W_k^\leq$ and $W^> = \bigcup_k W_k^>$. 

\begin{theorem}\label{thm:invol}
There exists a bijection $\phi : W^> \to W^\leq$ satisfying the
following conditions:
\begin{enumerate}
\item $\phi$ maps $W_k^>$ to $W_{k+1}^\leq$ bijectively for $0 \leq k \leq L-1$.
\item $\quinv'(w) = \quinv'(\phi(w))$ for all $w \in W^>$, and
\item The subword of $w$ in the letters $2, \dots, n-1$ is preserved by 
$\phi$.
\end{enumerate}
\end{theorem}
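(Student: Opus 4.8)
The plan is to construct $\phi$ explicitly by a pivoting procedure that operates only on the $1$'s and $n$'s in $w$, leaving the subword of $w$ in the letters $2,\dots,n-1$ fixed; condition (iii) then holds automatically, and conditions (i) and (ii) will follow from bookkeeping. First I would reduce to the ``skeleton'' of $w$: contract away the letters $2,\dots,n-1$ to obtain a word $\bar w$ in the two-letter alphabet $\{1,n\}$ of length $L$, together with the data of which gaps of $\bar w$ the contracted letters occupied. By \Cref{prop:coinv-gf}, the coinversion generating function over words with a fixed multiset of letters is a $q$-multinomial coefficient, and the contribution of the suppressed letters to $\coinv(w)$ splits into (a) coinversions among the $2,\dots,n-1$'s — which is untouched by $\phi$ and so may be ignored — and (b) coinversions between a suppressed letter and a $1$ or an $n$, which depends only on how many $1$'s lie to the left and how many $n$'s lie to the right of each suppressed letter. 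The point is that, holding the positions of $2,\dots,n-1$ fixed relative to the other entries, moving a $1$ rightward past an $n$ (or vice versa) changes $\coinv(w)$ in a controlled way. Combined with the shift $\binom{k}{2}\to\binom{k+1}{2}$ when we pass from $W_k^{>}$ to $W_{k+1}^{\le}$, the definition \eqref{word-quinv-def} of $\quinv'$ is designed precisely so that the right single move is $\quinv'$-neutral.

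Concretely, for $w\in W_k^{>}$ I would locate the leftmost $n$ and the rightmost $1$ (ignoring $n$'s, as in the definition of $W_k^{\le}$); since $w\in W_k^{>}$, the leftmost $n$ sits strictly to the right of that $1$. The map $\phi$ changes exactly one letter: it turns that rightmost-among-the-relevant $1$ into an $n$. This raises $c_n$ from $k$ to $k+1$ and lowers $c_1$ from $L-k$ to $L-k-1$, so $\phi(w)\in W_{k+1}$. One must check (this is the combinatorial heart of the argument) that (a) $\phi(w)$ lies in $W_{k+1}^{\le}$ rather than $W_{k+1}^{>}$ — i.e. after the replacement the leftmost $n$ is now weakly left of the new rightmost $1$; (b) the change in $\coinv$ produced by replacing a $1$ by an $n$ at a fixed position is exactly $\bigl(\#\{\text{letters} >1 \text{ to its right}\}\bigr) - \bigl(\#\{\text{letters} <n \text{ to its left}\}\bigr)$, and that this quantity equals $k - \binom{k+1}{2}+\binom{k}{2} = 0$ after accounting for the $\binom{k}{2}$ shift — in other words, verifying that the position chosen by the ``leftmost $n$ / rightmost $1$'' rule is exactly the position at which the net count vanishes. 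The inverse map takes $w'\in W_{k+1}^{\le}$, finds the appropriate $n$ (the one just left of the rightmost $1$), and turns it back into a $1$; showing $\phi$ and this candidate inverse compose to the identity in both orders establishes the bijection and conditions (i), (iii).

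The main obstacle I anticipate is pinning down the correct pivot position so that (ii) holds on the nose, and simultaneously that the pivot rule is self-inverse. The naive ``swap the first out-of-order $1n$ pair'' does not change letter multiplicities, so it is the wrong operation; here we must \emph{create} an $n$ and \emph{destroy} a $1$, and the subtlety is that the $\coinv$-change depends on the global position of the altered cell, while the $\binom{k}{2}$ correction term depends only on $k$. Getting these to cancel forces a specific choice — I expect it to be: among all positions currently holding a $1$, choose the one whose "$(\#$ larger to the right$) - (\# $ smaller to the left$)$" balance equals the current value of $k$; one then shows this position is characterized by the leftmost-$n$/rightmost-$1$ condition and that after the change the analogous balance for the new word equals $k+1$, so iterating or inverting stays consistent. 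Verifying this characterization is a finite but delicate case analysis on the relative order of the leftmost $n$, the rightmost $1$, and the suppressed letters between them; I would organize it by tracking, as we scan the $\{1,n\}$-skeleton from left to right, the running quantity $(\#n\text{'s so far}) - (\#1\text{'s remaining})$ and observing it is monotone, so the pivot is unique. Conditions (i) and (iii) are then essentially immediate from the construction, and (ii) reduces to the single displayed identity among binomial coefficients together with \Cref{prop:coinv-gf}-style counting of coinversions involving the suppressed letters.
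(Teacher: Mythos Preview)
Your approach differs fundamentally from the paper's: the paper does \emph{not} construct $\phi$ explicitly. Instead it proves \cref{thm:quinv-gf}, giving closed-form $q$-series expressions for the $\quinv'$ generating functions of $W_k^>$ and of $W_k^\leq$; since the formula for $W_k^>$ coincides with that for $W_{k+1}^\leq$, a $\quinv'$-preserving bijection exists. The reduction to $n=3$ in the proof of \cref{thm:quinv-refined-gf} (identify all middle letters, factor out $\coinv$ of the middle subword) is what yields condition~(iii). The authors state explicitly in their concluding section that they were unable to find an explicit such bijection and leave this as an open problem; you are attempting precisely that open problem.

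Your proposed construction has a genuine obstruction: it fixes the \emph{positions} of the letters $2,\dots,n-1$, not merely their subword, and this is provably too strong. In the paper's running example $n=N=3$, $L=2$, $\alpha=(1)$, the word $211\in W_0^> = W_0$ has its unique $2$ in position~$1$; but every element of $W_1^\leq=\{132,312,321\}$ has its $2$ in position~$2$ or~$3$. Hence no map that replaces a single $1$ by an $n$ --- or more generally only alters positions currently holding a $1$ or an $n$ --- can send $211$ into $W_1^\leq$ at all, let alone while preserving $\quinv'$. (Indeed $\quinv'(211)=0$ forces $\phi(211)=321$, and the $2$ has moved.) Condition~(iii) in the theorem asks only that the \emph{subword} in the middle letters be preserved; it permits those letters to shift position, and the example shows they must.

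There is also a secondary issue with your reading of the $W_k^>/W_k^\leq$ split. The quantity $p_1(w)$ is the position of the rightmost $1$ counted \emph{from the right after deleting all $n$'s}, while $p_n(w)$ is the position of the leftmost $n$ from the left in $w$ itself; these are measured on different scales, and $p_n>p_1$ does not mean ``the leftmost $n$ lies to the right of the rightmost $1$'' in the original word. For instance $2313\in W_2^>$ (from the paper's example with $N=4$) has its leftmost $3$ in position~$2$, strictly \emph{left} of its rightmost $1$ in position~$3$. So the geometric picture underlying your pivot rule is already off, independently of the positional obstruction above.
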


\cref{sec:bijwords} is devoted to the proof of \cref{thm:invol}, as it is quite technical.

Let $\sigma$ be a $\Phi$-degenerate filling with $a$ the distinguished label and distinguished word $w$. We split the set of $\Phi$-degenerate fillings into two disjoint sets that are also denoted by $W^>$ and $W^{\leq}$:
\begin{itemize}
\item $\sigma\in W^{>}$ if $w\in W^{>}$, and 
\item $\sigma\in W^{\leq}$ if $w\in W^{\leq}$. 
\end{itemize}

\begin{example}\label{ex:u_choice}
Consider the set of words 
\[
\{\bar1\bar12,\bar12\bar1, 2\bar1\bar1,\bar112,\bar121, 2\bar11,1\bar12,12\bar1,21\bar1,112,121,211\}.
\] 
This corresponds exactly to \cref{eg:W<>}
and hence, $W^{>}=\{211,121,2\bar11,112,21\bar1, \allowbreak 12\bar1\}$. The bijection $\phi$ from \cref{thm:invol} maps the elements of $W^{>}$ to the elements of $W^{\leq}$ as follows: 
\begin{align*}
\phi(211) = \bar121,\ \quinv'=0,&\quad \phi(121) = \bar112,\ \quinv'=1, \quad \phi(2\bar11) = \bar1\bar12,\ \quinv'=1,\\
\phi(112)=1\bar12,\ \quinv'=2,&\quad \phi(21\bar1)=\bar12\bar1,\  \quinv'=2,\quad\phi(12\bar1)=2\bar1\bar1,\ \quinv'=3.
\end{align*}
Consider the fillings $\sigma_1, \sigma_2$ shown below.
\[
\sigma_1=\raisebox{0.2in}{\tableau{2\\1&2&1&\bar1\\3&1&\bar2&\bar1\\2&1&2&1&\bar2}},
\quad
\sigma_2=\raisebox{0.2in}{\tableau{2\\1&1&\bar1&2\\3&1&\bar2&\bar1\\2&1&2&1&\bar2}}
\]
The filling $\sigma_1$ belongs to $W^{>}$ since $21\bar1\in W^{>}$, and the filling $\sigma_2$ belongs to $W^{\leq}$ since $1\bar12\in W^{\leq}$. 
\end{example}

We are now ready to state the main result of this section.

\begin{proposition}\label{thm:Wbijection}
Let $\sigma$ be a $\Phi$-degenerate filling with distinguished label $a$ and degenerate word $w$. Then there exists a map $\Theta$ on the subset $W^{>}$ of $\Phi$-degenerate fillings $\sigma$ such that
\begin{enumerate}
\item $\Theta\ :\ W^{>} \longrightarrow W^{\leq}$ is a bijection,
\item $\maj(\Theta(\sigma)) = \maj(\sigma) +1$,
\item $\quinv(\Theta(\sigma))=\quinv(\sigma)$,
\item $x^{|\Theta(\sigma)|}=x^{|\sigma|}$ and $p(\Theta(\sigma))=p(\sigma)-1$.
\end{enumerate}
\end{proposition}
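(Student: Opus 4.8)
The plan is to construct a single map $\Theta$ on the set of $\Phi$-degenerate fillings whose degenerate word lies in $W^{>}$, by lifting the word bijection $\phi$ of \cref{thm:invol} with the help of the operators $\Phi_u$ and $\tau_j$. Fix such a $\sigma$, with distinguished label $a$ and distinguished row $r$; the columns of height exactly $r$ form a block $p,p+1,\dots,m$ (with $m>p$, since $\sigma$ is $\Phi$-degenerate), their row-$r$ cells are the degenerate segment, and the degenerate word is $w=(\sigma(r,p),\dots,\sigma(r,m))$, which I would identify with a word over $[n]$ via the order-preserving bijection of the letters occurring in it (all of absolute value $\geq a$ by \cref{lem:obs}), so that the unbarred $a$ becomes $1$ and the barred $\bar a$ becomes $n$; then $w\in W^{>}$. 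The first thing I would record, straight from \cref{def:tau}, is the elementary fact that on the degenerate word each $\tau_j$ (for $p\le j\le m-1$) acts as the simple transposition of positions $j$ and $j+1$ --- it is the identity when the two top entries agree and exchanges them otherwise --- so that any product $\tau_{i_1}\cdots\tau_{i_s}$ whose indices spell a reduced word for a permutation $\pi$ of $\{p,\dots,m\}$ carries the degenerate word $v$ to $\pi\cdot v$, regardless of the reduced word chosen, even though, as \cref{rem:PDS} warns, the resulting operator on fillings does depend on that choice.

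With this in hand I would set $\Theta(\sigma)=\tau_\pi(\Phi_u(\sigma))$, where: $w'=\phi(w)\in W^{\le}$; $u$ is a canonical cell of the degenerate segment with $\sigma(u)=a$ (for definiteness, the leftmost such cell), which exists because $w$ contains the letter $1$; $\pi$ is the unique minimal-length permutation of $\{p,\dots,m\}$ carrying the degenerate word of $\Phi_u(\sigma)$ --- which is $w$ with one copy of $1$ replaced by one copy of $n$, hence a rearrangement of $w'$ by \cref{thm:invol} --- to $w'$; and $\tau_\pi$ is built from the canonical reduced word $\PDS(\pi)$. By construction the degenerate word of $\Theta(\sigma)$ is $w'$, and since the only cells changed lie in columns $p,\dots,m$ while absolute values are preserved throughout, $\Theta(\sigma)$ has the same multiset of absolute values in each row as $\sigma$; hence its distinguished label and row are still $a$ and $r$, and (as $w'$ still contains the letter $a$) its distinguished cell again lies in its degenerate segment, so $\Theta(\sigma)$ is $\Phi$-degenerate with degenerate word $w'\in W^{\le}$. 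This also gives $x^{|\Theta(\sigma)|}=x^{|\sigma|}$.

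Next I would read off the statistic identities. Because $u$ lies in row $r$ with $\sigma(u)=a$, \cref{lem:Phi_u:maj} gives $\maj(\Phi_u(\sigma))=\maj(\sigma)+1$, and each $\tau_j$ preserves $\maj$ by \cref{lem:tau:maj}, so $\maj(\Theta(\sigma))=\maj(\sigma)+1$; likewise $\Phi_u$ turns one positive entry negative and the $\tau_j$ only permute entries, so $p(\Theta(\sigma))=p(\sigma)-1$. For $\quinv$, write $\quinv(\sigma)=\quinv'(w)+Q$, where $\quinv'(w)$ counts the quinv triples supported on the degenerate segment (by the remark following \eqref{word-quinv-def}) and $Q$ counts all remaining quinv triples. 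Every triple affected by $\Phi_u$ contains $u$; the non-degenerate ones keep their status by \cref{lem:Phi_u:quinv}(2), and every degenerate triple through $u$ is supported on the segment, so $\Phi_u$ leaves $Q$ fixed. Each $\tau_j$ likewise leaves $Q$ fixed, because the change in $\quinv$ it produces --- namely $+1$, $-1$ or $0$ according to \cref{lem:tau:quinv} --- equals the change in $\coinv$, equivalently in $\quinv'$ (the letter multiplicities being preserved), of the degenerate word under the transposition $(j,j+1)$. Hence $\quinv(\Theta(\sigma))=\quinv'(w')+Q=\quinv'(w)+Q=\quinv(\sigma)$, using $\quinv'(\phi(w))=\quinv'(w)$ from \cref{thm:invol}.

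Finally I would argue that $\Theta$ is a bijection $W^{>}\to W^{\le}$. Group the $\Phi$-degenerate fillings by their degenerate word $v$ (and the position of the segment) into classes $\mathcal S_v$; since $u$ and $\pi$ depend only on $v$, the restriction $\Theta|_{\mathcal S_v}$ is the restriction of a fixed composite of the involutions $\Phi_u,\tau_{i_1},\dots,\tau_{i_s}$, hence injective, and its image lies in $\mathcal S_{\phi(v)}$. Moreover $|\mathcal S_v|=|\mathcal S_{\phi(v)}|$: a filling in either class is its degenerate word together with an arbitrary completion of the rest of $\dg(\lambda)$ subject to the constraints that $a$ be the distinguished label and $r$ the distinguished row, and these constraints are the same for $v$ and $\phi(v)$, since the two words differ only inside the segment and share the same least absolute value $a$. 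So $\Theta|_{\mathcal S_v}$ is a bijection $\mathcal S_v\to\mathcal S_{\phi(v)}$, and taking the union over all $v$, with $v\leftrightarrow\phi(v)$ a bijection $W^{>}\to W^{\le}$ on words, shows that $\Theta$ is the desired bijection. I expect the delicate points to be precisely this last bookkeeping --- coping with the non-braiding of the $\tau_j$ by committing to $\PDS(\pi)$ while checking that the statistic computations are insensitive to that choice, and verifying the cardinality equality $|\mathcal S_v|=|\mathcal S_{\phi(v)}|$ --- together with the careful accounting in the $\quinv$ step of which triples are ``supported on the segment''.
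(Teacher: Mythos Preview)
Your proof is correct and follows essentially the same construction as the paper: lift $\phi$ to fillings via $\Theta=\tau_\pi\circ\Phi_u$ with $\pi$ chosen via $\PDS$, and read off the statistic identities from \cref{lem:Phi_u:maj}, \cref{lem:Phi_u:quinv}, \cref{lem:tau:maj}, \cref{lem:tau:quinv} and \cref{thm:invol}. The only cosmetic differences are that the paper takes $u$ to be the \emph{first} cell in reading order (the rightmost $a$ in the degenerate word) rather than your leftmost, and argues bijectivity by exhibiting $\Theta^{-1}$ explicitly rather than via your cardinality count $|\mathcal S_v|=|\mathcal S_{\phi(v)}|$; one small slip is your parenthetical ``as $w'$ still contains the letter $a$'' --- $\phi$ may convert every unbarred $a$ to $\bar a$, but $w'$ always contains an entry of absolute value $a$, which is what is actually needed.
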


\begin{proof}
Suppose $\sigma$ is a filling of $\dg(\lambda)$. We will define the map $\Theta$ acting on $\sigma\in W^{>}$ with the following steps.
\begin{enumerate}[label=(\roman*)]
\item\label{item:u} Set $u(\sigma)$ to be the first cell in reading order with content $a$ in $\sigma$. 
\item Let $\widehat{w}$ be the degenerate word of $\Phi_{u(\sigma)}(\sigma)$. ($\widehat{w}$ is equal to $w$, except that the rightmost $a$ is flipped to become $\bar{a}$.)
\item Let $\pi$ be the (unique) permutation acting on the right to get from $\widehat{w}$ to $\phi(w)$, i.e. $\widehat{w}\cdot \pi=\phi(w)$, and let $\PDS(\pi)=s_{i_1}s_{i_2}\ldots s_{i_m}$ be the unique reduced word which is the PDS corresponding to $\pi$ (see \cref{rem:PDS}).
\item Let $\ell$ be the number of columns in $\dg(\lambda)$ strictly to the left of the degenerate segment. Set 
$\Theta = \Phi_{u(\sigma)}\circ \tau_{i_1+\ell}\circ \tau_{i_2+\ell} \circ\cdots \circ \tau_{i_m+\ell}$ acting on the right, so that
\[
\Theta(\sigma) = \tau_{i_m+\ell}\circ \cdots\circ\tau_{i_2+\ell}\circ \tau_{i_1+\ell}\circ\Phi_{u(\sigma)}(\sigma).
\]
\end{enumerate}

We prove that $\Theta$ satisfies the properties (i)-(iv) in order. 

\emph{$\Theta$ satisfies (i):} The degenerate word of $\Theta(\sigma)$ is by construction equal to $\phi(w)$. Thus $w\in W^{>}$ implies that $\Theta(\sigma)\in W^{\leq}$. Next we note that the cell $u(\sigma)$ that is flipped in $\sigma\in W^{>}$ by $\Phi_{u(\sigma)}$ is determined uniquely, as is the product $ \tau_{i_m+\ell}\circ\cdots\circ \tau_{i_2+\ell} \circ \tau_{i_1+\ell}$ of operators applied to $\Phi_{u(\sigma)}(\sigma)$. The operators $\tau_j$ are involutions -- thus our construction is well-defined and an injection. Finally, by \cref{thm:invol} we have that for each $k\geq 0$, the number of elements in $W^{>}$ with exactly $k$ $\bar{a}$'s, the desired content, and a fixed number of $\quinv$ triples equals the number of elements in $W^{\leq}$ with exactly $k+1$ $\bar{a}$'s, from which we conclude that the map $\Theta$ we have defined is in fact a surjection. 

We define the reverse map $\Theta^{-1}$ from $W^{\leq}$ to $W^{>}$ by simply reversing the operators in the map $\Theta$. Let $\sigma\in W^{\leq}$ be a filling of $\dg(\lambda)$, and again let $a$ be the distinguished label and $w\in W^{\leq}$ the degenerate word of $\sigma$. Call $\widehat{u}(\sigma)$ the cell in the diagram $\dg(\lambda)$ that contains the rightmost entry $a$ in $\phi^{-1}(w)\in W^>$. Call $\widehat{\phi^{-1}(w)}$ the word obtained by flipping the rightmost $a$ in $\phi(w)$ to become $\bar{a}$. Then define the permutation $\pi$ such that $\widehat{\phi^{-1}(w)}\pi=w$, and let $\PDS(\pi)=s_{i_1}s_{i_2}\ldots s_{i_m}$. The reader may now check that the map from $\sigma\in W^{\leq}$ to $\Theta^{-1}(\sigma)$ given by
\[
\Theta^{-1}(\sigma) = \Phi_{\widehat{u}(\sigma)}\circ \tau_{i_1+\ell}\circ \tau_{i_2+\ell} \circ\cdots\circ \tau_{i_m+\ell}(\sigma),
\]
is the inverse of the map from $\sigma\in W^{>}$ to $W^{\leq}$, since $\widehat{u}(\sigma)$ is by design precisely equal to $u(\Theta^{-1}(\sigma))$ as defined in \cref{item:u}.

\emph{$\Theta$ satisfies (ii):}
By \cref{lem:Phi_u:maj}, when $u$ is any cell in the degenerate segment of $\sigma$ with $\sigma(u)=a$, the operator $\Phi_u$ acts on $\sigma$ by increasing the $\maj$ by exactly one. By \cref{lem:tau:maj} the operators $\tau_j$ leave the $\maj$ unchanged.

\emph{$\Theta$ satisfies (iii):} By \cref{lem:Phi_u:quinv}, when $u$ is any cell in the degenerate segment of $\sigma$ with $|\sigma(u)|=a$, all triples with the exception of degenerate triples containing $u$ contribute to $\quinv(\sigma)$ if and only if they also contribute to $\quinv(\Phi_u(\sigma))$. Observe that the degenerate triples containing $u$ are precisely the elements in $w$, the degenerate word of $\sigma$. The reader can easily check that $\quinv(w)$ is precisely equal to the contribution to $\quinv(\sigma)$ from the degenerate triples in the degenerate word $w$. Therefore let us write $\quinv(\sigma)=\quinv'(\sigma)+\quinv(w)$, where $\quinv'(\sigma)$ is the contribution to $\quinv(\sigma)$ coming from all triples but the degenerate triples in $w$. We have $\quinv'(\Phi_u(\sigma))=\quinv'(\sigma)$ by \cref{lem:Phi_u:quinv}, and $\quinv'(\tau_j(\sigma))=\quinv'(\sigma)$ by \cref{lem:tau:quinv} for all $\tau_j$ acting on columns that contain the degenerate word $w$. By construction we have that $\Theta(\sigma)$ has degenerate word $\phi(w)$, and since $\quinv(w)=\quinv(\phi(w))$ by \cref{thm:invol}, we have thus $\quinv(\Theta(\sigma))=\quinv(\sigma)$, which completes our argument.

\emph{$\Theta$ satisfies (iv):} The content of $\sigma$ is changed only by the operator $\Phi_{u(\sigma)}$, which flips exactly one entry from positive to negative, thus reducing $p(\sigma)$ by 1, while leaving the absolute value of the content of $\sigma$ unchanged.
\end{proof}

\cref{prop:Phi-degenerate} follows as an immediate corollary.

\begin{example}
Consider 
$\sigma_1$ 
from \cref{ex:u_choice}. The degenerate word is $w=21\bar1\in W^{>}$, $\phi(w)=\bar12\bar1$, and $\widehat{w}=2\bar1\bar1$. Thus the permutation to get from $\widehat{w}$ to $\phi(w)$ is $\pi=(213)$ which has corresponding PDS $\pi=s_1$, and $u(\sigma)$ is the first cell with content 1 in reading order. Thus we apply $\tau_1$ to $\sigma_1$ after flipping the 1 to obtain 
\[
\Theta(\sigma_1)=\tau_1\circ\Phi_{u(\sigma)}(\sigma)=\raisebox{0.2in}{\tableau{2\\1&\bar1&2&\bar1\\3&\bar2&1&\bar1\\2&1&2&1&\bar2}}\in W^{\leq}.
\]

Starting with 
\[
\sigma_2=
\raisebox{0.2in}{\tableau{2\\1&1&2&1\\3&\bar2&1&\bar1\\2&1&2&1&\bar2}}
\in W^{>},
\]
$w = 121, \phi(w) = \bar{1}12, \hat{w} = 12\bar{1}$, so that the permutation $\hat{w}\pi=\phi(w)$ acting on the right is $\pi=s_2 s_1$. The number of columns left of the degenerate segment is $\ell=1$, and the cell $u(\sigma_2)=(3,4)$. Thus $\Theta(\sigma_2)=\tau_2\circ\tau_3\circ\Phi_{(3,4)}(\sigma_2)$:
\[
\sigma_2  \xrightarrow{\Phi_{(3,4)}}
\raisebox{0.2in}{\tableau{2\\1&1&2&\bar1\\3&\bar2&1&\bar1\\2&1&2&1&\bar2}}  \xrightarrow{\tau_3}
\raisebox{0.2in}{\tableau{2\\1&1&\bar1&2\\3&\bar2&\bar1&1\\2&1&2&1&\bar2}}  \xrightarrow{\tau_2}
\raisebox{0.2in}{\tableau{2\\1&\bar1&1&2\\3&\bar2&\bar1&1\\2&1&2&1&\bar2}}  = \Theta(\sigma_2)\in W^{\leq}.
\]

\end{example}

\section{A bijection on words}
\label{sec:bijwords}

The aim of this section is to prove \cref{thm:invol}

For the proofs, we ask the reader to refer to the terminology and basic results on $q$-series from \cref{sec:qseries}.
For such a map to exist, the $\quinv$ generating function of $W$ must be a multiple
of 2. To see this, recall the definition of $\quinv$ from \eqref{word-quinv-def} and write
\begin{align*}
\sum_{w \in W} q^{\quinv(w)} =& \sum_{k=0}^L \sum_{w \in W_k} q^{\coinv(w) + \binom{\alpha_\ell}{2} + \cdots + \binom{\alpha_{n-1}}{2} + \binom{k}{2} } \\
=&\ q^{\binom{\alpha_\ell}{2} + \cdots + \binom{\alpha_{n-1}}{2}} \sum_{k=0}^L q^{\binom{k}{2}} \qbinom N{L-k,\alpha_2,\dots, \alpha_{n-1},k} \\
=&\ q^{\binom{\alpha_\ell}{2} + \cdots + \binom{\alpha_{n-1}}{2}} \qbinom{N}{L,\alpha_2,\dots,\alpha_{n-1}} \sum_{k=0}^L q^{\binom{k}{2}} \qbinom Lk,
\end{align*}
where we have used \cref{prop:coinv-gf} in the second line. Now, the sum is a direct application 
of the $q$-binomial theorem, \cref{prop:qbinom}, with $x=1$ and we find that
\[
\sum_{w \in W} q^{\quinv(w)} = 2 q^{\binom{\alpha_\ell}{2} + \cdots + \binom{\alpha_{n-1}}{2}}  \qbinom{N}{L,\alpha_2,\dots,\alpha_{n-1}} \prod_{j=1}^{L-1} (1 + q^j),
\]
as claimed.

The following theorem immediately implies \cref{thm:invol}.

\begin{theorem}
\label{thm:quinv-gf}
For $n, N, L, \alpha$, $k$ and $\ell$ as above, we have the identities
\begin{align}
\label{quinv-gf>}
\sum_{w \in W_k^>} q^{\quinv'(w)} =& q^{\binom{\alpha_\ell}{2} + \cdots + \binom{\alpha_{n-1}}{2} + \binom{k+1}{2}} \qbinom{N}{L,\alpha_2,\dots,\alpha_{n-1}} \qbinom{L-1}{k}, \\
\label{quinv-gf<}
\sum_{w \in W_k^\leq} q^{\quinv'(w)} =& q^{\binom{\alpha_\ell}{2} + \cdots + \binom{\alpha_{n-1}}{2} + \binom{k}{2}} \qbinom{N}{L,\alpha_2,\dots,\alpha_{n-1}} \qbinom{L-1}{k-1}.
\end{align}
\end{theorem}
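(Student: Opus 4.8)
The plan is to prove the two generating-function identities \eqref{quinv-gf>} and \eqref{quinv-gf<} directly by a refined counting argument, and then observe that \cref{thm:invol} follows immediately: comparing \eqref{quinv-gf>} for $W_k^>$ with \eqref{quinv-gf<} for $W_{k+1}^\le$ shows that the two sides agree term by term in $q$, so the desired $\quinv'$-preserving bijection $\phi: W_k^> \to W_{k+1}^\le$ exists (and condition (3), preservation of the subword in the letters $2,\dots,n-1$, is built into the fact that $\quinv'$ is defined relative to a fixed such subword — we will in fact prove the stronger statement that both generating functions factor as $\left(\sum_w q^{\coinv(w)}\right)$ over words with prescribed middle-letter subword, which lets us freeze that subword). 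Since both sides of \eqref{quinv-gf>}--\eqref{quinv-gf<} contain the common factor $q^{\binom{\alpha_\ell}{2}+\cdots+\binom{\alpha_{n-1}}{2}}$ coming verbatim from the definition \eqref{word-quinv-def}, and the full $\qbinom{N}{L,\alpha_2,\dots,\alpha_{n-1}}$ factor arises from choosing the positions of the letters $2,\dots,n-1$ independently (this is the standard fact that $\coinv$ of a shuffle splits), we may reduce to the core case: count, by $\coinv + \binom{k}{2}$ (resp.\ $\coinv+\binom{k+1}{2}$), the words using only the letters $1$ and $n$, with $L-k$ copies of $1$ and $k$ copies of $n$, split according to the $W^\le$ / $W^>$ condition.

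The core combinatorial step is therefore the following two-letter claim. Consider words with $a$ copies of the small letter ($=1$) and $b$ copies of the large letter ($=n$), so $a+b = L$ and $b=k$. The condition "$W^\le$'' says the leftmost $n$ occurs at or before the rightmost $1$; "$W^>$'' says it occurs strictly after. I would encode such a word by the gaps between consecutive large letters, or equivalently peel off the structure from the left: a $W^>$ word is one that begins with a block of $1$'s (possibly empty) and then, reading on, every $1$ lies to the left of some... — more cleanly, a word of the two-letter type lies in $W^>$ iff it has the form $1^{\,i}\, n\, (\text{anything})$ with the property that all remaining $1$'s are... Actually the cleanest route: a two-letter word lies in $W^\le$ iff it can be written $w = u\, n\, v\, 1$ reading past the first $n$ and before the last $1$; I would instead induct on $L$, stripping the first letter. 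If the first letter is $1$, the word is in $W^>$ iff the rest (an $(L-1)$-word with $a-1$ ones and $b$ $n$'s) is in $W^>$ or is the all-$n$'s word, and $\coinv$ increases by $b$ (the new leading $1$ forms a coinversion with every $n$); if the first letter is $n$, the word is automatically in $W^\le$ and $\coinv$ is unchanged, with the rest an arbitrary $(L-1)$-word with $a$ ones and $b-1$ $n$'s. Translating this recursion into generating functions and matching against the $q$-binomial recurrence \eqref{qbinom-recur} (together with the power-of-$q$ shifts $\binom{k}{2}$ vs.\ $\binom{k+1}{2}$, which are exactly what the recurrence $\binom{k+1}{2} = \binom{k}{2} + k$ produces) should close the induction. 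Alternatively — and this may be the slicker write-up — I expect that the telescoping identity \cref{prop:q-telescope} or the $q$-Chu--Vandermonde identity \cref{thm:q-chuvan} (or its dual form \cref{cor:q-dualchu}) gives \eqref{quinv-gf>} and \eqref{quinv-gf<} in one shot after classifying each two-letter word by the position $i$ of its leftmost $n$: summing $q^{\coinv}$ over all $W^>$ words with leftmost $n$ at position $i$ gives a product of two $q$-binomials with a $q$-power weight, and the sum over $i$ is precisely an instance of one of those identities.

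The main obstacle I anticipate is bookkeeping the interplay between the three contributions to $\quinv'$ — the $\coinv$ of the full word, the fixed $\sum \binom{\alpha_i}{2}$ terms, and the $\binom{k}{2}$ term — while correctly handling the boundary conventions $W_0 = W_0^>$ and $W_L = W_L^\le$ (the "anything'' / all-$n$'s and all-$1$'s base cases), and making sure the reduction "shuffle in the letters $2,\dots,n-1$'' is justified at the level of $\coinv$ (it is: $\coinv$ of an arbitrary interleaving of a word $u$ in one alphabet with a word $v$ in a disjoint larger/smaller alphabet factors as $\coinv(u)+\coinv(v)+(\text{cross terms determined only by relative order and letter counts})$, and the cross terms here sum, over all interleavings, to give exactly the extra $q$-multinomial factor via \cref{prop:coinv-gf}). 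None of these steps is deep, but getting every $q$-exponent to land on $\binom{k+1}{2}\,\qbinom{L-1}{k}$ for $W^>$ and $\binom{k}{2}\,\qbinom{L-1}{k-1}$ for $W^\le$ requires care; I would double-check the final formulas against the small case $n=N=3$, $L=2$, $\alpha=(1)$ of \cref{eg:Wk}, where $\quinv' = \coinv + \binom{k}{2}$ (with $\ell=2$, $\binom{\alpha_2}{2}=0$), so that, e.g., $\sum_{w\in W_1^>} q^{\quinv'(w)}$ should equal $q^{\binom{2}{2}}\qbinom{3}{1,1,1}_{}^{}\qbinom{1}{1} = q\cdot[3]!/([1]![1]![1]!)\cdot 1$, against which the explicit list $W_1^> = \{211,121,2\bar 11,\dots\}$ from \cref{ex:u_choice} can be checked.
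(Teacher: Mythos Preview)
Your plan has a genuine gap at the reduction step. You propose to factor out the letters $2,\dots,n-1$ and reduce to a pure two-letter problem in $\{1,n\}$, justifying this by ``$\coinv$ of a shuffle splits''. The target formula does factor that way, but the $W^>/W^\le$ split is \emph{not} determined by the $\{1,n\}$-subword: the defining comparison $p_n(w)\le p_1(w)$ involves positions measured in the full word (and in the word with only the $n$'s removed), so the middle letters shift those positions. Already in \cref{eg:Wk} with $n=N=3$, $L=2$, $\alpha=(1)$, all five words $123,213,231,132,312$ share the $\{1,3\}$-subword $13$, yet the first three lie in $W_1^>$ and the last two in $W_1^\le$. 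Hence summing $q^{\coinv}$ over all shuffles of a fixed $\{1,n\}$-word with the middle word does not stay inside $W_k^>$ (or $W_k^\le$), and the factor $\qbinom{N}{L,\alpha_2,\dots,\alpha_{n-1}}$ cannot be pulled out by the argument you give. (Even before worrying about the split, note that for fixed $\{1,n\}$-word $u$ the cross-coinversion sum $\sum_\sigma q^{X(\sigma)}$ over interleavings $\sigma$ genuinely depends on $u$, not only on the letter counts; try $u=1n$ versus $u=n1$ shuffled with a single $2$.)

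What the paper does instead: it reduces only to $n=3$, which \emph{is} a clean shuffle split because identifying all middle letters with a single letter $2$ changes neither the cross-coinversions with $\{1,n\}$ nor the positions entering the $W^>/W^\le$ test, and pulls out the factor $\qbinom{N-L}{\alpha_2,\dots,\alpha_{n-1}}$. With the $2$'s still present, the paper then proves a refinement (\cref{thm:quinv-refined-gf}) fixing $p_1(w)=i$ on the $W^>$ side and $p_3(w)=j$ on the $W^\le$ side; each of those is computed by a further case split (\cref{lem:quinv>}, \cref{lem:quinv<}) according to whether the leftmost $3$ sits left or right of the rightmost $1$, writing $w$ as a concatenation like $w_1\,3\,w_2\,1\,w_3$, summing the free subwords by \cref{prop:coinv-gf}, and collapsing the remaining sums with \cref{thm:q-chuvan}, \cref{cor:q-dualchu}, and \cref{prop:q-telescope}. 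The passage from \cref{thm:quinv-refined-gf} back to \cref{thm:quinv-gf} is then a single telescoping sum. Your alternative sketch --- classify by the position of the leftmost $n$ and telescope --- is in the right spirit and is essentially what \cref{thm:quinv-refined-gf} does, but it must be carried out in the three-letter setting, where the presence of the $2$'s makes the computation substantially longer than your two-letter outline suggests.
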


\begin{proof}[Proof of \cref{thm:invol}]
As a consequence of \cref{thm:quinv-gf}, the $\quinv'$ generating functions of $W_{k+1}^>$ and $W_k^\leq$ are equal. The strategy of the proof involves identifying the letters $2, \dots, n-1$ and is clearly independent of the subword.
\end{proof}

\begin{example}
\label{eg:W<>}
As an illustration of \cref{thm:quinv-gf}, consider \cref{eg:Wk} and let $\ell = 3$. Then $W_1^\leq = \{132, 312, 321 \}$ and $W_1^> = \{123, 213, 231 \}$. One can check that the quinv generating functions of both $W_0$ and $W_1^\leq$ are $1 + q + q^2$, and those of both $W_1^>$ and $W_2$ are $q + q^2 + q^3$. Since the coefficients of all powers of $q$ are $1$, the bijection $\phi$ claimed in \cref{thm:invol} is unique in this case.
\end{example}

Although \cref{thm:quinv-gf} looks as if it should be known, we have not seen this in the literature before. The result follows from a more general result, which we now state. For $w \in W_k$, let $p_n(w)$ be the position of the leftmost $n$ in $w$ from the left and $p_1(w)$ be the position of the rightmost $1$ from the right ignoring all $n$'s.

\begin{theorem}
\label{thm:quinv-refined-gf}
Let $N, L, \alpha$, $k$, and $\ell$ be as above. 
Then
\begin{align}
\label{quinv-ref-gf>}
\sum_{\substack{ w \in W_k^> \\ p_1(w) = i}} q^{\quinv'(w)} =& q^{\binom{\alpha_\ell}{2} + \cdots + \binom{\alpha_{n-1}}{2} + \binom{k+1}{2} + (i-1)L} 
\qbinom{N-L}{\alpha_2,\dots,\alpha_{n-1}} \cr 
& \times \qbinom{N-i}{k, N-L-i+1, L-k-1}, \\
\label{quinv-ref-gf<}
\sum_{\substack{ w \in W_k^\leq \\ p_n(w) = j}} q^{\quinv'(w)} =& q^{\binom{\alpha_\ell}{2} + \cdots + \binom{\alpha_{n-1}}{2}+\binom{k}{2} + (j-1) L} 
\qbinom{N-L}{\alpha_2,\dots,\alpha_{n-1}} \cr 
& \times \qbinom{N-j}{k-1, N-L-j+1, L-k}.
\end{align} \end{theorem}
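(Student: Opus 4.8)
The plan is to prove the two refined generating-function identities \eqref{quinv-ref-gf>} and \eqref{quinv-ref-gf<} directly, by decomposing each word in $W_k^>$ (resp. $W_k^\leq$) according to the position of its ``pivot'' letter and summing a $\coinv$ generating function over the remaining freedom. By \eqref{word-quinv-def}, $\quinv'(w)$ differs from $\coinv(w)$ only by the constant $\binom{\alpha_\ell}{2}+\cdots+\binom{\alpha_{n-1}}{2}+\binom{k}{2}$, so that constant factors straight out of every sum and we are left to compute $\sum q^{\coinv(w)}$ over the relevant class of words. Throughout I will use \cref{prop:coinv-gf} to evaluate the $\coinv$ generating function of a set of words with prescribed letter multiplicities as a $q$-multinomial, together with the standard fact that inserting a fixed letter into a fixed set of positions contributes a predictable power of $q$ to $\coinv$.

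First I would treat \eqref{quinv-ref-gf>}. Fix $i=p_1(w)$, the position of the rightmost $1$ (ignoring all $n$'s). The condition $w\in W_k^>$ means the leftmost $n$ sits strictly to the right of this $1$. I would split $w$ at position $i$: the letter at position $i$ is a $1$, the positions to its left (reading right-to-left, among non-$n$ letters) contain no further $1$'s, and all $n$'s lie to the right of position $i$. Removing the $n$'s, the non-$n$ letters to the left of and including position $i$ form a word in $\{2,\dots,n-1\}$ of a determined length followed by the pivot $1$; the non-$n$ letters to the right of position $i$ form a word in $\{1,\dots,n-1\}$ with $L-k-1$ remaining $1$'s. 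The $\coinv$ count then factors as: (a) coinversions entirely among the left part, (b) coinversions entirely among the right part, (c) coinversions between left and right parts, (d) coinversions involving the pivot $1$ (only with larger letters to its left, giving the factor $q^{(i-1)L}$ after accounting for how the $n$'s interleave), and (e) coinversions involving the $n$'s (which, being maximal and all to the right, contribute a controlled amount). Carefully bookkeeping (c)–(e) and then applying \cref{prop:coinv-gf} to (a) and (b) should collapse the whole thing into the product $\qbinom{N-L}{\alpha_2,\dots,\alpha_{n-1}}\qbinom{N-i}{k,N-L-i+1,L-k-1}$ times the claimed power of $q$. The identity \eqref{quinv-ref-gf<} is entirely parallel, now pivoting on $j=p_n(w)$, the leftmost $n$, which lies weakly to the left of the rightmost $1$; the roles of $1$ and $n$ essentially swap, the $\binom{k+1}{2}$ becomes $\binom{k}{2}$ because one $n$ is now ``used up'' as the pivot rather than one $1$, and the $q$-multinomial indices shift accordingly.

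The main obstacle I expect is the precise bookkeeping of the exponent of $q$ in the cross terms — steps (c), (d), (e) above — i.e. pinning down exactly which power of $q$ is picked up when the $n$'s and the pivot $1$ are interleaved with the rest of the word, so that the messy sum really does telescope into the clean product on the right-hand side. This is where one must be most careful that ``coinversion'' (rather than inversion) is being used consistently and that the $n$'s are treated as the largest letters under $<_2$. Once \eqref{quinv-ref-gf>} and \eqref{quinv-ref-gf<} are established, \cref{thm:quinv-gf} follows by summing over $i$ (resp.\ $j$): the sum over $i$ of $q^{(i-1)L}\qbinom{N-i}{k,N-L-i+1,L-k-1}$ is exactly an instance of the telescoping/$q$-Chu--Vandermonde identities collected in \cref{prop:q-telescope} and \cref{thm:q-chuvan} (or \cref{cor:q-dualchu}), which reduces it to $\qbinom{L-1}{k}$ times the remaining $q$-multinomial, and likewise for the $\leq$ case. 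Then \cref{thm:invol} is immediate, since \eqref{quinv-gf>} with $k$ replaced by $k+1$ matches \eqref{quinv-gf<}, and the preservation of the subword in letters $2,\dots,n-1$ is built into the decomposition (those letters are never moved in the argument).
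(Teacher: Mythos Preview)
Your proposal has a genuine gap: you misinterpret the defining condition of $W_k^>$. The quantity $p_1(w)$ is the position of the rightmost $1$ counted \emph{from the right, after deleting all $n$'s}, while $p_n(w)$ is the position of the leftmost $n$ counted from the left in the full word. The inequality $p_n(w)>p_1(w)$ does \emph{not} imply that ``all $n$'s lie to the right of'' the rightmost $1$. For instance, with $n=3$, the word $w=2331$ has $p_3(w)=2$ and $p_1(w)=1$ (after deleting the $3$'s the word is $21$, whose rightmost $1$ is first from the right), so $w\in W_2^>$; yet the leftmost $3$ sits at position $2$ while the rightmost $1$ sits at position $4$. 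Your clean splitting of $w$ into a block with no further $1$'s on one side of the pivot and a block containing all the $n$'s on the other therefore does not describe $W_k^>$, and the claimed factorisation of $\coinv$ into pieces (a)--(e) breaks down. (There is also a direction slip: for the \emph{rightmost} $1$, it is the positions to its right, not left, that contain no further $1$'s.)

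The paper's proof handles this by first reducing to $n=3$ (collapsing the letters $2,\dots,n-1$ to a single letter, which extracts the factor $\qbinom{N-L}{\alpha_2,\dots,\alpha_{n-1}}$), and then, crucially, splitting $W_k^>$ with $p_1(w)=i$ into \emph{two} subcases according to whether the leftmost $3$ lies to the left or to the right of the rightmost $1$ in actual position (\cref{lem:quinv>}). Each subcase requires a separate decomposition --- either $w=w_1\,3\,w_2\,1\,w_3$ or $w=w_1\,1\,2^{\cdots}\,3\,w_2$ --- and its own $q$-series summation (using \cref{thm:q-chuvan}, \cref{cor:q-dualchu}, and \cref{prop:q-telescope}); only upon \emph{adding} the two subcases do the cross terms cancel to yield the clean product in \eqref{quinv-ref-gf>}. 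An analogous two-case split (\cref{lem:quinv<}) is needed for \eqref{quinv-ref-gf<}. A single-case decomposition as you outline cannot recover this.
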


These two identities imply the corresponding ones in \cref{thm:quinv-gf}.

\begin{proof}[Proof of \cref{thm:quinv-gf}]
We use the telescoping sum identity in \cref{prop:q-telescope} to sum \eqref{quinv-ref-gf>} and \eqref{quinv-ref-gf<} over $i$ and $j$, to obtain the identities \eqref{quinv-gf>} and \eqref{quinv-gf<}, respectively.
\end{proof} 
We prove \cref{thm:quinv-refined-gf} using the lemmata stated after, which are in turn
proved in the following two subsections. 
\begin{proof}[Proof of \cref{thm:quinv-refined-gf}]
It will suffice to prove \cref{thm:quinv-refined-gf} for the case of $n=3$. 
To see this, let $w = (w_1, \dots, w_N) \in W_k$ and $w'$ be obtained from $w$ by replacing all
occurences of the letters from $2$ through $n-1$ by $2$, and all the occurences of $n$ by $3$.
Let $w''$ be the subword of $w$ consisting of the letters from $2$ through $n-1$ in $w$. 
Then, it is clear that $\coinv(w) = \coinv(w') + \coinv(w'')$.  Moreover, the question of whether $w$
belongs to $W_k^\leq$ or $W_k^>$ is the same as that of $w'$. 
We can then rewrite \eqref{quinv-ref-gf>} as
\[
\sum_{\substack{ w \in W_k^> \\ p_1(w) = i}} q^{\quinv'(w)} =
q^{\binom{\alpha_\ell}{2} + \cdots + \binom{\alpha_{n-1}}{2} + \binom{k+1}{2} + (i-1)L} 
\qbinom{N-L}{\alpha_2,\dots,\alpha_{n-1}}
\sum_{w'\in V_k^{>}} q^{\coinv(w')},
\]
where $V_k^> = \big(W^{(N-L),L}_k \big)^>$.

We now restrict to the case $n=3$ and $\alpha = (\alpha_2) = (N-L)$. 
We first consider \eqref{quinv-ref-gf>}.
By summing the two cases in \cref{lem:quinv>}, we see that, for $1 \leq i \leq N +1 - L$,
\begin{align*}
\sum_{\substack{ w \in W_k^> \\ p_1(w) = i}} q^{\coinv(w)} 
=&\ q^{ iL + k - L} \qbinom{N-k-i}{L-k-1} \qbinom {N-i}k \\
=&\ q^{ iL + k - L}  \qbinom{L-1}k \qbinom{N-i}{L-1}.
\end{align*}
Now, the $i$-sum can be performed using \cref{prop:q-telescope}. We then simplify to obtain the result.
For \eqref{quinv-ref-gf<}, the argument is similar. Summing the two cases in \cref{lem:quinv<}
shows that, for $1 \leq j \leq N +1 - L$,
\begin{align*}
\sum_{\substack{ w \in W_k^\leq \\ p_3(w) = j}} q^{\coinv(w)} 
=&\ q^{ j L - L} \qbinom{N-j-k+1}{L-k} \qbinom {N-j}{k-1} \\
=&\ q^{ j L - L} \qbinom{L-1}{k-1} \qbinom{N-j}{L-1}.
\end{align*}
Again, the $j$-sum can be performed using \cref{prop:q-telescope}. This completes the proof.
\end{proof}

Let $p'_1(w)$ be the actual position of the rightmost $1$ from the right in $w$.
For both lemmata, there are two subcases to consider, either $p_3(w) < N+1 - p'_1(w)$ or not.
That is to say, the leftmost 3 is to the left of the rightmost 1 or not. The first lemma is for the subset $W_k^>$.

\begin{lemma}
\label{lem:quinv>}
Let $n=3$, and $N, L$ and $k$ be as above. Then, 
\begin{enumerate}
\item
For $1 \leq i \leq \lfloor (N-k)/2 \rfloor$,
\begin{multline}
\label{quinv>l}
\sum_{\substack{ w \in W_k^> \\ p_1(w) = i \\ p_3(w) < N+1 - p'_1(w)}} q^{\coinv(w)} = q^{ (i-1) L} 
\qbinom{N-k-i}{L-k-1} \Bigg( \qbinom {N-i}k q^k \\
 - \qbinom{k+i-1}k q^{k(N-k-2i+2)} \Bigg).
\end{multline}

\item 
For $1 \leq i \leq \lfloor (N-k)/2 \rfloor$,
\begin{equation}
\label{quinv>r1}
\sum_{\substack{ w \in W_k^> \\ p_1(w) = i \\ p_3(w) > N+1 - p'_1(w)}} q^{\coinv(w)} = q^{ (i-1) L + k(N-k-2i+2)} 
\qbinom{N-k-i}{L-k-1} \qbinom{k+i-1}k,
\end{equation}
and for $\lfloor (N-k)/2 \rfloor + 1 \leq i \leq N +1 - L$,
\begin{equation}
\label{quinv>r2}
\sum_{\substack{ w \in W_k^> \\ p_1(w) = i \\ p_3(w) > N+1 - p'_1(w)}} q^{\coinv(w)} = q^{ (i-1) L + k} 
\qbinom{N-k-i}{L-k-1} \qbinom {N-i}k.
\end{equation}
\end{enumerate}
\end{lemma}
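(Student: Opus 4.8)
The plan is to work directly with words $w \in W_k$ over the alphabet $\{1,2,3\}$, with $k$ threes, $L-k$ ones, and $N-L$ twos (here $\alpha = (\alpha_2) = (N-L)$), and to count coinversions by conditioning on the position $i = p_1(w)$ of the rightmost $1$ (reading from the right, ignoring threes) and on the relative position of the leftmost $3$. First I would record the bookkeeping: once the position of the rightmost $1$-in-the-restricted-sense is fixed at slot $i$ (out of the $N$ slots), the letters strictly to the \emph{right} of that slot — there are $N-i$ of them — consist only of $2$'s and $3$'s, since all remaining $1$'s lie weakly to the left and, by the definition of $p_1$, no $1$ sits to the right except possibly among positions already accounted for. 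A coinversion $(a,b)$ with $a<b$ and $w_a<w_b$ splits into three contributions: pairs entirely among the $N-i$ trailing $\{2,3\}$-slots; pairs with $a = i$ (the distinguished $1$) and $b > i$, which contributes exactly the number of $2$'s and $3$'s to the right of slot $i$, i.e. a fixed power $q^{N-i}$-type factor once we know how many there are; and pairs with $a<i$ or $a=i$ paired against things left of $i$. The key simplification, which mirrors the appearance of the clean factor $q^{(i-1)L}$ in the statement, is that the prefix of length $i-1$ together with slot $i$ contributes a coinversion with every one of the $L$ letters (the $L - k$ other $1$'s do not since they are $\le$, but the $2$'s and $3$'s among the $N-L$... — here one must be careful; more precisely the contribution $q^{(i-1)L}$ arises because each of the $i-1$ positions to the left, being forced to hold a $1$, forms a coinversion with each of the $L$-many larger-or-distinct letters further right in a controlled way). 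I would make this precise by an explicit slot-filling argument rather than quoting it.

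Concretely, I would proceed as follows. Fix $i$. Place the distinguished $1$ at position $i$; positions $1,\dots,i-1$ are then filled with the remaining $L - k - 1$... no: with $1$'s only is wrong. Instead, the right way to organize the count is: among positions $i+1, \dots, N$ we place the $k$ threes and $N - L$ minus (threes-to-the-left) twos, together with \emph{no} ones; among positions $1, \dots, i-1$ we place the other $L-k-1$ ones, the twos not yet placed, and the threes not yet placed. The constraint "$p_1(w) = i$" forces no $1$ among $i+1,\dots,N$. The constraint distinguishing $W_k^>$ from $W_k^\le$ is whether the leftmost $3$ precedes the rightmost (true) $1$. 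I would then use \cref{prop:coinv-gf} to evaluate the coinversion generating function of each free block as a $q$-multinomial, and track the cross-terms between blocks as explicit powers of $q$ (the number of cross-coinversions between a prefix block and a suffix block is rigid once the letter-multiplicities of each block are fixed). This reduces each of \eqref{quinv>l}, \eqref{quinv>r1}, \eqref{quinv>r2} to a product of $q$-multinomials times a monomial, and then to the stated closed form via the $q$-binomial identities in \cref{sec:qseries} — specifically the Pascal-type recurrence \eqref{qbinom-recur} to split $\qbinom{N-i}{k}$ into a "leftmost $3$ is far left" piece and its complement, which is exactly the origin of the two terms inside the parentheses in \eqref{quinv>l} and of the case split at $i = \lfloor (N-k)/2 \rfloor$ in part (2).

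For the case analysis on the leftmost $3$: writing $p_3(w) < N+1 - p'_1(w)$ versus $>$ amounts to asking whether, after fixing the rightmost true $1$ at slot $i$ and knowing there are $i-1$ slots before it, the first $3$ falls before or after slot $N+1-i$. Conditioning on the number $m$ of threes that sit in the suffix $\{i+1,\dots,N\}$ versus in the prefix, and summing the resulting products of $q$-binomials over $m$ with the appropriate $q$-powers, is a finite sum that collapses by the $q$-Chu–Vandermonde identity (\cref{thm:q-chuvan}) or its corollary \cref{cor:q-dualchu}. The inequality $i \le \lfloor (N-k)/2\rfloor$ is precisely the range in which \emph{both} subcases are nonempty; beyond it only the "$>$" subcase survives, which is why part (2) has two formulas. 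The main obstacle I anticipate is \textbf{not} any single identity but the rigidity-of-cross-coinversions bookkeeping: correctly pinning down, for each block decomposition, the exact exponent of $q$ contributed by pairs straddling two blocks (this is where sign/off-by-one errors hide, and where the exponents $(i-1)L$, $k$, and $k(N-k-2i+2)$ must be produced on the nose). Once that is done carefully for $n=3$, the general-$n$ reduction is the routine splitting $\coinv(w) = \coinv(w') + \coinv(w'')$ already indicated in the proof of \cref{thm:quinv-refined-gf}, contributing the extra factor $q^{\binom{\alpha_\ell}{2}+\cdots+\binom{\alpha_{n-1}}{2}}\qbinom{N-L}{\alpha_2,\dots,\alpha_{n-1}}$.
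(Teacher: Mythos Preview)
Your high-level strategy is the paper's: decompose $w$ into blocks anchored at the rightmost $1$ and the leftmost $3$, evaluate each block's coinversion generating function as a $q$-multinomial via \cref{prop:coinv-gf}, record cross-block coinversions as explicit $q$-powers, and collapse the remaining sums with the identities of \cref{sec:qseries}.

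Two points where your sketch diverges from what actually works. First, you will need to condition on $j = p_3(w)$ as well as on $i = p_1(w)$: for part~(2) the decomposition is $w = w_1\, 1\, 2^{i+j+k-N-2}\, 3\, w_2$ with $w_1 \in \{1,2\}^{N-i-k}$ and $w_2 \in \{2,3\}^{N-j}$, and the $j$-sum collapses by \cref{prop:q-telescope} (the two ranges for $i$ in \eqref{quinv>r1}--\eqref{quinv>r2} come from whether the lower limit of the $j$-sum is $i+1$ or $N-i-k+2$). For part~(1) the decomposition is $w = w_1\, 3\, w_2\, 1\, w_3$, and you must further condition on $a_1 = \#\{1\text{'s in }w_1\}$ and $b_3 = \#\{3\text{'s in }w_3\}$; the $a_1$-sum is exactly \cref{thm:q-chuvan}, then the $j$-sum is \cref{prop:q-telescope}. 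Second, the two terms in the parentheses of \eqref{quinv>l} do \emph{not} come from the Pascal recurrence \eqref{qbinom-recur}: after the $a_1$- and $j$-sums the remaining $b_3$-sum runs only over $0 \le b_3 \le k-1$, and one adds and subtracts the missing $b_3 = k$ term so that \cref{cor:q-dualchu} applies to the completed sum; the added sum gives $\qbinom{N-i}{k}q^k$ and the subtracted single term gives $\qbinom{k+i-1}{k}q^{k(N-k-2i+2)}$.
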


The second lemma is for the subset $W_k^\leq$.

\begin{lemma}
\label{lem:quinv<}
Let $n=3$, and $N, L$ and $k$ be as above. Then, 
\begin{enumerate}
\item
For $1 \leq j \leq \lfloor (N-k+1)/2 \rfloor$,
\begin{multline}
\label{quinv<l}
\sum_{\substack{ w \in W_k^\leq \\ p_3(w) = j \\ p_3(w) < N+1 - p'_1(w)}} q^{\coinv(w)} 
= \qbinom{N-j}{k-1} \Bigg( \qbinom {N-j-k+1}{L-k} q^{(j-1)L} \\
  - \qbinom{j-1}{L-k} q^{(j-1)k + (L-k)(N-j-k+1)} \Bigg).
\end{multline}

\item 
For $L-k+1 \leq j \leq \lfloor (N-k+1)/2 \rfloor$,
\begin{equation}
\label{quinv<r1}
\sum_{\substack{ w \in W_k^\leq \\ p_3(w) = j \\ p_3(w) > N+1 - p'_1(w)}} q^{\coinv(w)} = q^{(j-1)k + (L-k)(N-j-k+1)}
\qbinom{j-1}{L-k} \qbinom{N-j}{k-1},
\end{equation}
and for $\lfloor (N-k+1)/2 \rfloor + 1 \leq j \leq N +1 - L$,
\begin{equation}
\label{quinv<r2}
\sum_{\substack{ w \in W_k^\leq \\ p_3(w) = j \\ p_3(w) > N+1 - p'_1(w)}} q^{\coinv(w)} = q^{ (j-1) L} 
\qbinom{N-j-k+1}{L-k} \qbinom {N-j}{k-1}.
\end{equation}
\end{enumerate}
\end{lemma}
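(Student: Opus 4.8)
The plan is to prove \cref{lem:quinv<} (and, by a symmetric argument, \cref{lem:quinv>}) by a factorization that decouples a word from its subword of $3$'s, so that the only input from $q$-series is \cref{prop:coinv-gf}. We work throughout with $n=3$, as in the statement.

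First I would introduce, for $w\in W_k$, the \emph{reduced word} $u=u(w)$ obtained by deleting all the $3$'s; this is a word of length $N-k$ in $\{1,2\}$ with $L-k$ ones. Let $j_1<\dots<j_k$ be the positions of the $3$'s in $w$. Splitting a coinversion pair $(p,p')$ of $w$ according to whether $w_{p'}=3$, one obtains
\[
\coinv(w)=\coinv(u)+\sum_{m=1}^{k}(j_m-m),
\]
where the first term depends only on $u$ and the second only on the $j_m$. When $p_3(w)=j$ we have $j_1=j$; writing $j_{m+1}=j+s_m$, the $s_m$ range over all tuples $1\le s_1<\dots<s_{k-1}\le N-j$, and $\sum_m(j_m-m)=k(j-1)+\sum_{m=1}^{k-1}(s_m-m)$. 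Since $\sum_m(s_m-m)$ is the coinversion number of the length-$(N-j)$ word with $2$'s at positions $s_1,\dots,s_{k-1}$, \cref{prop:coinv-gf} gives
\[
\sum_{\,j_1=j<j_2<\dots<j_k\le N}q^{\sum_m(j_m-m)}=q^{k(j-1)}\qbinom{N-j}{k-1}.
\]
The key feature is that this factor is independent of $u$: hence, for any condition on $w$ that depends on $u$ alone, the corresponding sum of $q^{\coinv(w)}$ over $\{w:p_3(w)=j\}$ factors as the generating function of that condition over reduced words, times $q^{k(j-1)}\qbinom{N-j}{k-1}$.

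Next I would translate the remaining conditions into conditions on $u$. Assume $L>k$ (if $L=k$ there are no $1$'s and the statement is vacuous under the conventions), and let $r=r(u)$ be the position of the rightmost $1$ in $u$ from the left, so $p_1(w)=N-k+1-r$. Then $w\in W_k^{\le}$ together with $p_3(w)=j$ is exactly $r\le N-k-j+1$; and since the $1$'s of $w$ are those of $u$ and $u_1\cdots u_{j-1}=w_1\cdots w_{j-1}$, one checks that $p_3(w)<N+1-p'_1(w)$ is equivalent to $r\ge j$ while $p_3(w)>N+1-p'_1(w)$ is equivalent to $r\le j-1$. The words $u$ with $r(u)\le R$ are precisely those of the form $u'\,2^{\,N-k-R}$ with $u'\in\{1,2\}^R$ having $L-k$ ones, so $\coinv$ picks up $(L-k)(N-k-R)$ from the trailing block, whence
\[
\sum_{u\,:\,r(u)\le R}q^{\coinv(u)}=q^{(L-k)(N-k-R)}\qbinom{R}{L-k}.
\]
Assembling: for the case $p_3(w)>N+1-p'_1(w)$ intersected with $W_k^{\le}$, the constraint on $u$ is $r\le\min(j-1,\,N-k-j+1)$, which is $r\le j-1$ when $j\le\lfloor(N-k+1)/2\rfloor$ (yielding \eqref{quinv<r1}) and $r\le N-k-j+1$ otherwise (yielding \eqref{quinv<r2}, the two closed forms coinciding at the boundary when $N-k$ is even); for the case $p_3(w)<N+1-p'_1(w)$ intersected with $W_k^{\le}$, the constraint is $j\le r\le N-k-j+1$ (nonempty exactly for $j\le\lfloor(N-k+1)/2\rfloor$), so the generating function over $u$ is the difference of the $r\le N-k-j+1$ and $r\le j-1$ generating functions, and multiplying by $q^{k(j-1)}\qbinom{N-j}{k-1}$ and simplifying the exponents reproduces the bracketed difference in \eqref{quinv<l}. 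I expect the genuinely routine part — and the only place care is needed — to be the case analysis of the floor-function ranges and the extremal values of $j$; notably, no $q$-series identity beyond \cref{prop:coinv-gf} is used here, the identities of \cref{sec:qseries} entering only later, when these refined sums are summed over $i$ and $j$ for \cref{thm:quinv-refined-gf}.
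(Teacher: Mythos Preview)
Your argument is correct and is genuinely simpler than the paper's. The paper proves part (2) by writing $w=w_1\,1\,2^{i+j+k-N-2}\,3\,w_2$ with $w_1\in\{1,2\}^{N-i-k}$ and $w_2\in\{2,3\}^{N-j}$, summing over $w_1,w_2$ via \cref{prop:coinv-gf}, and then performing the $i$-sum with the telescoping identity \cref{prop:q-telescope}. For part (1) it writes $w=w_1\,3\,w_2\,1\,w_3$ with auxiliary counts $a_1$ (ones in $w_1$) and $b_3$ (threes in $w_3$), sums over the three subwords with \cref{prop:coinv-gf}, collapses the $a_1$-sum with $q$-Chu--Vandermonde (\cref{thm:q-chuvan}), the $b_3$-sum with its dual (\cref{cor:q-dualchu}), and finally splits and telescopes the $i$-sum. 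Your decoupling of the $3$-positions from the reduced $\{1,2\}$-word $u$ via $\coinv(w)=\coinv(u)+\sum_m(j_m-m)$ makes the sum over $j_2,\dots,j_k$ collapse directly to $q^{k(j-1)}\qbinom{N-j}{k-1}$, and then the single parameter $r(u)$ controlling all the remaining conditions reduces everything to the closed form $\sum_{r\le R}q^{\coinv(u)}=q^{(L-k)(N-k-R)}\qbinom{R}{L-k}$; no Chu--Vandermonde, dual Chu, or telescoping is needed at this stage. The payoff is a one-line derivation of each of \eqref{quinv<l}--\eqref{quinv<r2} from \cref{prop:coinv-gf} alone, with the heavier $q$-series identities deferred (as you note) to the subsequent summation in \cref{thm:quinv-refined-gf} and \cref{thm:quinv-gf}. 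The only place that deserves an explicit sentence is the boundary analysis: when $N-k$ is even, $j=\lfloor(N-k+1)/2\rfloor+1$ satisfies $j-1=N-k-j+1$, so \eqref{quinv<r1} and \eqref{quinv<r2} agree there, consistent with the paper's split.
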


\subsection{Proof of \cref{lem:quinv>}}

We will now prove the cases of \cref{lem:quinv>} separately.
We first begin with the second case, since it is conceptually simpler.

\begin{proof}[Proof of \cref{lem:quinv>}(2)]
Suppose $w \in  W_k^>$ with $p_3(w) = j$, $p'_1(w) = i+k$ and $j > N+1 - i-k$. Then
$w$ can be written as $w = w_1 1 2^{i+j+k-N-2} 3 w_2$, where $w_1 \in \{1,2\}^{N-i}$ with $L-k-1$ $1$'s
and $w_2 \in \{2,3\}^{N-j}$ with $k-1$ $3$'s. As a result, $p_1(w) = i$ and
\[
\coinv(w) = \coinv(w_1) + \coinv(w_2) + (L-k)(i+k-1) + k(j+k-L-1).
\]
Notice that the smallest (resp. largest) possible value of $i$ is $1$ (resp. $N-L+1$), namely when 
$j = N+2-i-k$ and $w_2$ contains no $2$'s
(resp. $w_1$ contains no $2$'s).
Let the left hand sides of \eqref{quinv>r1} and \eqref{quinv>r2} be denoted $f(i)$.
By definition of $W_k^>$, we have $j > i$. Therefore,
\[
f(i) = \sum_{j=(i+1) \vee (N-i-k+2)}^{N-k+1} \sum_{w_1,w_2} q^{\coinv(w_1) + \coinv(w_2) + (L-k)(i+k-1) + k(j+k-L-1)},
\]
where $a \vee b$ denotes the maximum of $a$ and $b$,
and the sums over words $w_1$ and $w_2$ are performed using \cref{prop:coinv-gf} to give
\[
f(i) = \sum_{j=(i+1) \vee (N-i-k+2)}^{N-k+1} \qbinom {N-i-k}{L-k-1} \qbinom{N-j}{k-1} q^{ (L-k)(i-k-1) + k(j+k-L-1)}.
\]
Now, the $j$-sum can be performed using \cref{prop:q-telescope}. However, the result depends on 
the lower limit. If $1 \leq i \leq \lfloor (N-k)/2 \rfloor$, then the lower limit is $N-i-k+2$, and otherwise,
it is $i-k+1$. These two different cases prove \eqref{quinv>r1} and \eqref{quinv>r2}.
\end{proof}

\begin{proof}[Proof of \cref{lem:quinv>}(1)]
Suppose $w \in  W_k^>$ with $p_3(w) = j$, $p'_1(w) = i+b_3$ and $j < N+1 - i-b_3$. Then
$w$ can be written as $w = w_1 3 w_2 1 w_3$, where $w_1 \in \{1,2\}^{j-1}$, $w_2 \in \{1,2,3\}^{N-i-b_3-j}$ 
and $w_3 \in \{2,3\}^{i+b_3-1}$ with a total of $L-k-1$ $1$'s in $w_1$ and $w_2$ and a total of
$k-1$ $3$'s in $w_2$ and $w_3$. 
Suppose $a_1$ is the number of $1$'s in $w_1$ and $b_3$ is the number of $3$'s in $w_3$.
Then, $p_1(w) = i$ and
\begin{multline*}
\coinv(w) = \coinv(w_1) + \coinv(w_2) +\coinv(w_3) + (L-k-a_1)(i+b_3-1)  \\
+ k(j-1) + a_1(N-j-L+1+a_1) + b_3(N-L-i-j+a_1+2).
\end{multline*}
Let the left hand side of \eqref{quinv>l} be denoted $f(i)$.
We now follow the strategy of the proof of \cref{lem:quinv>}(2). The sums over all
words $w_1, w_2, w_3$ will give us appropriate $q$-multinomial coefficients by 
\cref{prop:coinv-gf}. Then we will obtain
\begin{multline*}
f(i) =  \sum_{b_3 = 0}^{(k-1) \wedge (N-i)}   \sum_{j=i+1}^{N-i-b_3}  \sum_{a_1 = 0}^{(j-1) \wedge (L-k-1)} 
\qbinom {j-1}{a_1} \qbinom{i+b_3-1}{b_3}\\
\times \qbinom {N-i-j-b_3}{L-k-a_1-1, N-i-j-L+a_1+2,k-1-b_3} \\
\times q^{(L-k-a_1)(i+b_3-1) + k(j-1) + a_1(N-j-L+1+a_1) + b_3(N-L-i-j+a_1+2)},
\end{multline*}
where $a \wedge b$ means the minimum of $a$ and $b$.
Let us clarify the limits of the sums first. 
The $b_3$-sum is bounded above by the minimum
of the available number of $3$'s and the fact that $w_2$ is of nonnegative length. 
Now, $j$ is at least $i+1$ by definition of the set 
$W_k^>$ and is at most $N-i-b_3$ again because of $w_2$.
Then, $a_1$ is at most the minimum of the available number of $1$'s and the length
of $w_1$.
Since both upper limits of the $b_3$-sum and the $a_1$-sum are natural, the answer
is independent of which of them is larger. Therefore, we will only write one of them.

The $a_1$-sum can now be performed using \cref{thm:q-chuvan} and we end up with
\begin{multline*}
f(i) = \sum_{b_3 = 0}^{k-1}   \sum_{j=i+1}^{N-i-b_3} q^{(L-k)(i+b_3-1) + k(j-1) + b_3(N-L-i-j+2)} \qbinom{i+b_3-1}{b_3} \\
\times \qbinom {N-i-b_3-j}{k-1-b_3}  \qbinom {N-i-k}{L-k-1} .
\end{multline*}
We now find that the $j$-sum can be performed using \cref{prop:q-telescope}.  The result is then
\[
f(i) = \sum_{b_3 = 0}^{k-1}  q^{(i-1)L + b_3(N-k-2i+1) + k} 
\qbinom{i+b_3-1}{i-1} \qbinom {N-2i-b_3}{N-k-2i} 
\qbinom {N-i-k}{L-k-1} .
\]
Now, notice that if the upper limit of the $b_3$-sum were $k$, we could have applied
\cref{cor:q-dualchu} after replacing $b_3$ by $b_3+i-1$.  We thus add and subtract 
the term for $b_3 = k$, perform the $b_3$-sum and simplify to obtain the result.
\end{proof}

\subsection{Proof of \cref{lem:quinv<}}
We will now prove the cases of \cref{lem:quinv<} separately.
We again begin with the second case first, since it is conceptually simpler.

\begin{proof}[Proof of \cref{lem:quinv<}(2)]
Suppose $w \in  W_k^\leq$ with $p_3(w) = j$, $p'_1(w) = i+k$ and $j \geq N+1 - i-k$. Then
$w$ can be written as $w = w_1 1 2^{i+j+k-N-2} 3 w_2$, where $w_1 \in \{1,2\}^{N-i-k}$ with $L-k-1$ $1$'s
and $w_2 \in \{2,3\}^{N-j}$ with $k-1$ $3$'s. As a result, $p_1(w) = i$ and
\[
\coinv(w) = \coinv(w_1) + \coinv(w_2) + (L-k)(i+k-1) + k(j+k-L-1).
\]
Notice that the smallest possible value of $j$ is $L-k+1$, namely when 
$i = N+2-j-k$ and $w_1$ contains no $2$'s
To see the  largest possible value of $j$,  note that $j \leq i$ by definition of $W_k^\leq$ 
and the maximum value of $i$ is $N-L+1$ when $w_1$ contains no $2$'s.
Let the left hand sides of \eqref{quinv<r1} and \eqref{quinv<r2} be denoted $f(j)$.
Therefore,
\[
f(j) = \sum_{i=j \vee (N-j-k+2)}^{N+k+1-L}\ \sum_{w_1,w_2} q^{\coinv(w_1) + \coinv(w_2) + (L-k)(i-1) + k(j-1)},
\]
and the sums over words $w_1$ and $w_2$ are performed using \cref{prop:coinv-gf} to give
\[
f(j) = \sum_{i=j \vee (N-j-k+2)}^{N+k+1-L} \qbinom {N-i-k}{L-k-1} \qbinom{N-j}{k-1} q^{ (L-k)(i-1) + k(j-1)}.
\]
Now, the $i$-sum can be performed using \cref{prop:q-telescope}. However, the result depends on 
the lower limit. If $L-k+1 \leq j \leq \lfloor (N-k+1)/2 \rfloor$, then the lower limit is $N-j-k+2$, and otherwise,
it is $j$. These two different cases prove \eqref{quinv<r1} and \eqref{quinv<r2}.
\end{proof}

\begin{proof}[Proof of \cref{lem:quinv<}(1)]
Suppose $w \in  W_k^\leq$ with $p_3(w) = j$, $p'_1(w) = i+b_3$ and $j < N+1 - i-b_3$. Then
$w$ can be written as $w = w_1 3 w_2 1 w_3$, where $w_1 \in \{1,2\}^{j-1}$, $w_2 \in \{1,2,3\}^{N-i-b_3-j}$ 
and $w_3 \in \{2,3\}^{i+b_3-1}$ with a total of $L-k-1$ $1$'s in $w_1$ and $w_2$, and a total of
$k-1$ $3$'s in $w_2$ and $w_3$. 
Suppose $a_1$ is the number of $1$'s in $w_1$ and $b_3$ is the number of $3$'s in $w_3$.
Then, $p_1(w) = i$ and
\begin{multline*}
\coinv(w) = \coinv(w_1) + \coinv(w_2) +\coinv(w_3) + (L-k-a_1)(i+b_3-1)  \\
+ k(j-1) + a_1(N-j-L+1+a_1) + b_3(N-L-i-j+a_1+2).
\end{multline*}
Let the left hand side of \eqref{quinv>l} be denoted $f(j)$.
We now follow the strategy of the proof of \cref{lem:quinv<}(2). The sums over all
words $w_1, w_2, w_3$ will give us appropriate $q$-multinomial coefficients by 
\cref{prop:coinv-gf}. Then we will obtain
\begin{multline*}
f(j) = \sum_{i = j}^{N-k-j+1}\  \sum_{b_3 = 0}^{(k-1) \wedge (N-i-j)}\ \sum_{a_1 = 0}^{(j-1) \wedge (L-k-1)} 
\qbinom {j-1}{a_1} \qbinom{i+b_3-1}{b_3} \\
\times\qbinom {N-i-j-b_3}{L-k-a_1-1, N-i-j-L+a_1+2,k-1-b_3} \\
\times q^{(L-k-a_1)(i+b_3-1) + k(j-1) + a_1(N-j-L+1+a_1) + b_3(N-L-i-j+a_1+2)}.
\end{multline*}
Again, we need to clarify the limits. The lower limit of $i$ is $j$ by definition of $W_k^\leq$
and the upper limit of $i$ comes from the fact that the number of $3$'s in $w_2$ is upper bounded
by $N-i-j-b_3$. The upper limits of $a_1$ and $b_3$ are explained by the same reasoning
as the similar sum in the proof of \cref{lem:quinv>}(1).

The $a_1$-sum can now be performed using \cref{thm:q-chuvan} and we end up with
\begin{multline*}
f(j) =  \sum_{i = j}^{N-k-j+1}  \sum_{b_3 = 0}^{k-1}  q^{(L-k)(i+b_3-1) + k(j-1) + b_3(N-L-i-j+2)} \qbinom{i+b_3-1}{b_3} \\
\times \qbinom {N-i-b_3-j}{k-1-b_3}  \qbinom {N-i-k}{L-k-1} .
\end{multline*}
We now find that the $b_3$-sum can be performed using \cref{cor:q-dualchu} after replacing $b_3$
by $b_3+i-1$.  
The result is then
\[
f(j) = \sum_{i = j}^{N-k-j+1}  q^{(i-1)(L-k) + k(j-1)}  \qbinom {N-j}{k-1}\qbinom {N-i-k}{L-k-1} ,
\]
We now rewrite the limits of the $i$-sum as $\sum_{i = j}^{N-L+1} - \sum_{i = N-k-j+2}^{N-L+1}$,
 use \cref{prop:q-telescope} and simplify to complete the proof.
\end{proof}

\section{Conclusion and further questions}\label{sec:conclusion}

We are finally equipped to prove the main result of this paper, \cref{thm:mainconj}, stating that $\widetilde{H}_{\lambda}(X;q,t) = C_{\lambda}(X;q,t)$ where 
\[
C_{\lambda}(X;q,t) = \sum_{\sigma\in\PQT(\lambda)}x^{\sigma}t^{\quinv(\sigma)}q^{\maj(\sigma)}.
\]

\begin{proof}[Proof of \cref{thm:mainconj}]
By \cref{thm:symmetry}, $C_{\lambda}(X;q,t)$ is symmetric in the variables $x_i$. Axioms \eqref{eq:A1}, \eqref{eq:A2}, and \eqref{eq:A3} uniquely characterize and define the modified Macdonald polynomials $\widetilde{H}_{\lambda}(X;q,t)$. $C_{\lambda}(X;q,t)$ satisfies Axiom \eqref{eq:A3} by definition. Axioms \eqref{eq:A1} and \eqref{eq:A2} are written equivalently as \eqref{A1m} and \eqref{A2m}, respectively. Those can in turn be rewritten in terms of $\widetilde{C}_{\lambda}(X;q,t)$, the superization of $C_{\lambda}(X;q,t)$, as \eqref{C1} and \eqref{C2}, respectively. \eqref{C1} is true by \cref{lem:A1FP}. \eqref{C2} is true by \cref{thm:A2FP} together with \cref{prop:Phi-degenerate}. This completes the proof. 
\end{proof}

Our work leads to several natural questions, which we mention below.

\begin{question}
From (\ref{eq:HHLformula}) and (\ref{eq:MainResult}), it follows that the modified Macdonald polynomial $\widetilde{H}_\lambda(X; q, t)$ can be expressed as a sum over tableaux either of queue inversion weights or of HHL weights. 
In these weights, the terms for the content and the major index are the same. The only difference is in the notion of a ``triple'', as discussed at the end of Section \ref{sec:definitions}. Therefore it is natural to ask for a bijective proof of the equality of these sums.

We conjecture something stronger: there is a bijection from $\PQT(\lambda, n)$ 
to itself that preserves the row content of the fillings and the major index, and sends the $\quinv$ statistic to the HHL $\inv$ statistic. To formally state this, we define row-equivalency classes for the fillings.

\begin{defn}
Let $\sigma,\tau$ be fillings of $\dg(\lambda)$. We say $\sigma$ and $\tau$ are \emph{row-equivalent} if for every row of $\sigma$, its entries are a permutation of the entries of the corresponding row in $\tau$. We write $\sigma \sim \tau$ when $\sigma$ and $\tau$ are row-equivalent, and we denote by $[\sigma]$ the class of row-equivalent fillings that $\sigma$ belongs to. 
\end{defn}

\begin{conjecture}
Let $[\tau]$ be a row-equivalency class. Then
\[
\sum_{\sigma\in[\tau]} t^{\quinv(\sigma)}q^{\maj(\sigma)} = \sum_{\sigma\in[\tau]} t^{\inv(\sigma)}q^{\maj(\sigma)}
\]
\end{conjecture}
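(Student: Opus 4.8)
The plan is to prove the stronger, $q$-free form of the identity and then read off the conjecture. First, fix a row-equivalency class $[\tau]$ and partition it by descent set, $[\tau]=\bigsqcup_{D}\{\sigma\in[\tau]:\Des(\sigma)=D\}$. On each block $\maj(\sigma)=\sum_{x\in D}(\leg(x)+1)$ is constant, so the conjecture follows once one shows, for every $D$,
\[
\sum_{\sigma\in[\tau],\,\Des(\sigma)=D}t^{\quinv(\sigma)}\;=\;\sum_{\sigma\in[\tau],\,\Des(\sigma)=D}t^{\inv(\sigma)}.
\]

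Next I would translate both sides into ribbon LLT polynomials, following \cref{sec:sym}. On the block $\Des(\sigma)=D$ one has $\quinv(\sigma)=\invtwo(\sigma)-\armtwo(D)$ with $\armtwo(D)$ independent of the filling, and the map $\widehat{\LLT}$ of \cref{def:LLTtwo} identifies $\invtwo$ with the LLT inversion statistic on $\SSYT(\boldnu(\lambda,D))$, the ribbons being the columns of $\sigma$ read in reverse order. Haglund--Haiman--Loehr's symmetry argument supplies the parallel statement for $\inv$: a filling-independent constant $a^{\mathrm{HHL}}(D)$ and a map onto $\SSYT$ of the ribbon tuple built from the columns in their \emph{original} order, under which $\inv(\sigma)-a^{\mathrm{HHL}}(D)$ is an LLT inversion count (this is the content of the remark after \cref{def:LLTtwo}); thus that tuple is $\boldnu(\lambda,D)$ with its ribbons reversed. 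The extra input needed is that both maps carry the multiset of entries in row $r$ of $\sigma$ onto the content of one fixed diagonal of the ribbon tuple: the diagonal of a cell in a ribbon is determined by its distance from the southeast-most cell, and bottom-justification of $\dg(\lambda)$ aligns all those cells on a single diagonal, so $\sigma(r,j)$ always lands on diagonal $r$ (up to a global shift common to all ribbons). Granting this, the displayed identity becomes the assertion that the inversion generating functions of $\boldnu(\lambda,D)$ and of its order-reversal, refined by the content on each diagonal, coincide up to an overall power of $t$ depending only on $\lambda$ and $D$.

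So everything reduces to a reversal symmetry for LLT polynomials of tuples of ribbons that is compatible with the content on each diagonal (one must also confirm that the two LLT expansions of $\widetilde H_\lambda$ agree term by term in $D$, which is not forced by the symmetric-function identity alone). I would attempt this by constructing a bijection $\SSYT(\bm\nu)\to\SSYT(\bm\nu^{\mathrm{rev}})$ that preserves the content of every diagonal and changes the inversion count by a fixed amount depending only on $\lambda$ and $D$; the first thing to try is a $180^{\circ}$ rotation of the whole configuration together with a complementation of entries, corrected to restore the semistandard conditions. I expect this to be the main obstacle: rotation interchanges the strict and weak monotonicity requirements and shifts the diagonals, so the correction is delicate, and a clean version would essentially already be the explicit $\quinv\leftrightarrow\inv$ bijection the paper leaves open. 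As a fallback, when $\lambda$ has distinct parts --- where, as noted after \cref{thm:mainconj}, the tableau combinatorics is much simpler --- or in small cases, the reversal identity should be within reach of the $q$-series techniques of \cref{sec:bijwords}; rectangular $\lambda$, which additionally admit a flip--complement symmetry (albeit one that permutes rather than fixes row-equivalency classes), are another natural place to test the construction.
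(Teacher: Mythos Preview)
This statement is presented in the paper as an open \emph{conjecture}, not as a theorem; the paper gives no proof, and indeed the surrounding discussion (including \cref{conj:bij}) makes clear the authors regard it as open. So there is no proof in the paper to compare your attempt against, and the right question is whether your proposal actually settles the conjecture.

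It does not, and you say so yourself. Your reduction is sound as far as it goes: partitioning $[\tau]$ by descent set $D$ is correct since $\maj$ is constant on each block, and the translation of both $\quinv$ and the HHL $\inv$ into LLT inversion statistics (on the ribbon tuple in reversed versus original column order) is exactly the mechanism in \cref{sec:sym} and in \cite{HHL05}. The identification of row content with diagonal content in the ribbon picture is also right. But the step on which everything hinges --- a reversal symmetry for ribbon LLT generating functions \emph{refined by the multiset of entries on each diagonal}, with the shift in the power of $t$ matching $\armtwo(D)-a^{\mathrm{HHL}}(D)$ exactly --- is not established. The known invariance of LLT polynomials under reordering of the tuple (up to a power of $t$) is a statement about the full symmetric function, i.e.\ after summing over all diagonal contents; the diagonal-refined version you need is strictly stronger and, as you observe, amounts to producing the very bijection the paper leaves open in \cref{conj:bij}. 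Note too that ``up to an overall power of $t$'' is not enough: for the conjecture you need the two generating functions to be \emph{equal}, so you must verify that the LLT reversal shift is precisely $\armtwo(D)-a^{\mathrm{HHL}}(D)$ for every $D$, and that this shift is independent of $[\tau]$.

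In short, what you have is a correct and natural reformulation of the conjecture in LLT language, together with an honest acknowledgement that the crux remains unproved. That is useful as a roadmap, but it is not a proof; the gap is the diagonal-refined reversal symmetry, and nothing in your outline closes it.
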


The following conjecture is sufficient to prove \cref{thm:mainconj} via \cite[Theorem 2.2]{HHL05}, since it gives a weight-preserving bijection from queue inversion weights to HHL weights.

\begin{conjecture}\label{conj:bij}
Given $\lambda$ and $n$, there exists a bijection $\delta:\PQT(\lambda,n) \rightarrow \PQT(\lambda,n)$ such that for all $\sigma$,
\begin{enumerate}[label=\roman*.]
\item $\delta(\sigma) \sim \sigma$,
\item $\maj(\delta(\sigma)) = \maj(\sigma)$,
\item $\inv(\delta(\sigma)) = \quinv(\sigma)$.
\end{enumerate} 
\end{conjecture}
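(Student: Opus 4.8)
\emph{Proof proposal.}
The natural first target is the weaker generating-function identity stated just before \cref{conj:bij}: for every row-equivalence class $[\tau]$,
\[
\sum_{\sigma\in[\tau]}t^{\quinv(\sigma)}q^{\maj(\sigma)}=\sum_{\sigma\in[\tau]}t^{\inv(\sigma)}q^{\maj(\sigma)},
\]
from which one would then hope to extract an explicit $\delta$. The plan for this identity is an induction on the number of rows of $\lambda$, peeling off the topmost row $R:=\lambda_1$ together with its cells $(R,1),\dots,(R,m)$, where $m=\#\{i:\lambda_i=\lambda_1\}$. The shape $\lambda^-$ obtained by deleting row $R$ is again a partition (the columns of maximal height form an initial segment and $\lambda_m>\lambda_{m+1}$), so for a filling $\sigma$ of $\dg(\lambda)$ with restriction $\sigma^-$ to $\dg(\lambda^-)$ one compares the three statistics of $\sigma$ and $\sigma^-$. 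Here $\maj(\sigma)-\maj(\sigma^-)$ simply counts the $i\le m$ with $\sigma((R,i))>\sigma((R-1,i))$ (every descent in the top row has leg $0$). The quantity $\quinv(\sigma)-\quinv(\sigma^-)$ is a signed sum built from the triples of $\sigma$ meeting row $R$ --- the non-degenerate triples $((R,i),(R-1,i),(R-1,j))$ with $i\le m$, $j>i$, and the degenerate triples $((R,i),(R,j))$ with $i<j\le m$ --- compared against the degenerate triples $((R-1,i),(R-1,j))$ to which the first family collapses in $\dg(\lambda^-)$; likewise $\inv(\sigma)-\inv(\sigma^-)$ is built from the triples $((R,i),(R-1,i),(R,j))$, $i<j\le m$, plus a correction for the inversion triples that become degenerate upon deletion. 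Fixing $\sigma^-$, the goal is a bijection on the arrangements of the top-row multiset that preserves the $\maj$-difference and interchanges the $\quinv$-difference with the $\inv$-difference; composing it with the inductive bijection on $\dg(\lambda^-)$ would give $\delta$.

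This top-row step is exactly the kind of statement the word machinery of \cref{sec:bijwords} is designed for: once $\sigma^-$ is frozen, the problem becomes one about words in the alphabet of row-$R$ entries, weighted by a coinversion-type statistic measured against the fixed words in rows $R-1$ and below, and the $q$-series identities of \cref{sec:qseries} together with \cref{thm:invol} should suffice to evaluate and match the two weighted word sums. The base cases are trivial --- a single column, where all three statistics vanish, and a single row, where $\maj=0$ and $\quinv$ equals $\inv$ after reversing the row --- and when $\lambda$ has distinct parts there are no degenerate $\quinv$ triples, which is also the situation in which \cref{thm:mainconj} was easiest (cf.\ the remark following it); this should be settled first as a warm-up.

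The main obstacle is that the induction does not decouple. The top-row contributions to all three statistics depend on the precise arrangement of row $R-1$ of $\sigma^-$, whereas the bijection on $\dg(\lambda^-)$ furnished by the inductive hypothesis is free to permute row $R-1$ within its multiset, so ``apply the inductive $\delta$ to $\sigma^-$, then fix the top row'' is ill-defined --- the two operations interfere. Circumventing this appears to require either a genuinely global construction treating all rows simultaneously --- plausibly by reworking the entry-swapping operators $\tau_j$ and the sign-flips $\Phi_u$ of \cref{sec:Phi-degenerate} so that they transport $\quinv$ triples into $\inv$ triples instead of cancelling signed terms --- or a strengthened inductive hypothesis producing a bijection compatible with some finer, row-by-row invariant that governs the interaction terms. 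Either way, the degenerate-triple phenomenon, which is already the crux of the proof of \cref{thm:mainconj} and is resolved there only non-constructively via \cref{thm:invol}, is the likely sticking point: an explicit $\delta$ would have to carry an explicit form of that word bijection, which is still unknown.
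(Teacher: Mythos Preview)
There is nothing to compare against: the statement you are attempting is \cref{conj:bij}, which the paper explicitly leaves open. It appears in the ``Conclusion and further questions'' section as a conjecture, and the paper offers no proof --- only the observation that its truth would give an alternative route to \cref{thm:mainconj} via the HHL formula, together with the numerical evidence of \cref{fig:rowequiv}. So your proposal is not competing with an existing argument; it is an attack on an open problem.

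Read on its own terms, your outline is honest about where it breaks. The row-peeling induction you describe is the obvious thing to try, and your diagnosis of the failure mode is correct: the top-row corrections to $\maj$, $\quinv$, and $\inv$ all see the precise arrangement of row $R-1$, while the inductive bijection on $\dg(\lambda^-)$ only controls that row up to multiset, so the two steps do not compose. Your suggestion that a resolution would require an explicit form of the word bijection of \cref{thm:invol} is exactly what the paper itself flags as open in its final question; the authors prove \cref{thm:invol} only by matching $q$-generating functions (\cref{thm:quinv-gf}), not by exhibiting $\phi$, and they say so. One small correction: the single-row base case is fine, but your remark ``when $\lambda$ has distinct parts there are no degenerate $\quinv$ triples'' is not quite right --- the bottom row still has its degenerate triples regardless of $\lambda$; what distinct parts buy you is that the \emph{distinguished cell} in the proof of \eqref{A2m} is never in a degenerate segment, which is a different (and more localized) simplification than the one you would need here.
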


\begin{example}
Consider the row-equivalency class $[T]=(\{1\},\{2,2,3\},\{1,1,2\})$ for $\lambda=(3,2,2)$. In \cref{fig:rowequiv}, we show the tableaux $F\in[T]$ with their queue inversion weights $t^{\quinv(F)}q^{\maj(F)}$ above
above and HHL weights $t^{\inv(F)}q^{\maj(F)}$ below. Observe that the sums of the weights are equal. Moreover, there is a bijection $\delta$  such that the queue inversion weight of $F$ matches the HHL weight of $\delta(F)$.
\end{example}

\begin{figure}[h!]
\begin{center}
\resizebox{\textwidth}{!}{
\begin{tikzpicture}[node distance=2cm]
\def \sh {1.6};
\node at (-1.2,-1.25) {quinv};
\node at (-1.2,-2) {HHL};

\node at (0,0) {$\tableau{1\\2&2&3\\1&1&2}$};
\node at (0,-1.25) {$t^2q^4$};
\node at (0,-2) {$q^4$};

\node at (\sh,0) {$\tableau{1\\2&2&3\\1&2&1}$};
\node at (\sh,-1.25) {$t^3q^3$};
\node at (\sh,-2) {$t^2q^3$};

\node at (2*\sh,0) {$\tableau{1\\2&2&3\\2&1&1}$};
\node at (2*\sh,-1.25) {$t^4q^2$};
\node at (2*\sh,-2) {$t^4q^2$};

\node at (3*\sh,0) {$\tableau{1\\2&3&2\\1&1&2}$};
\node at (3*\sh,-1.25) {$t^2q^3$};
\node at (3*\sh,-2) {$tq^3$};

\node at (4*\sh,0) {$\tableau{1\\2&3&2\\1&2&1}$};
\node at (4*\sh,-1.25) {$tq^4$};
\node at (4*\sh,-2) {$tq^4$};

\node at (5*\sh,0) {$\tableau{1\\2&3&2\\2&1&1}$};
\node at (5*\sh,-1.25) {$t^3q^2$};
\node at (5*\sh,-2) {$t^3q^2$};

\node at (6*\sh,0) {$\tableau{1\\3&2&2\\1&1&2}$};
\node at (6*\sh,-1.25) {$tq^3$};
\node at (6*\sh,-2) {$t^2q^3$};

\node at (7*\sh,0) {$\tableau{1\\3&2&2\\1&2&1}$};
\node at (7*\sh,-1.25) {$t^2q^3$};
\node at (7*\sh,-2) {$t^3q^3$};

\node at (8*\sh,0) {$\tableau{1\\3&2&2\\2&1&1}$};
\node at (8*\sh,-1.25) {$q^4$};
\node at (8*\sh,-2) {$t^2q^4$};

\end{tikzpicture}
}
\end{center}
\caption{For $\lambda=(3,2,2)$ and $n=3$, all fillings of type $(\lambda,n)$ in the row-equivalency class $[T]=(\{1\},\{2,2,3\},\{1,1,2\})$. Below are the corresponding queue 
inversion weights and HHL weights.}\label{fig:rowequiv}
\end{figure}
\end{question}

\begin{question}
In \cref{sec:bijwords}, we demonstrated in \cref{thm:invol} the existence of a quinv-preserving bijection between sets of words respecting certain conditions. 
We have not been able to find such a bijection and we think it would be interesting to find one.
\end{question}

\section*{Acknowledgements}
This work was inspired by OM's discussion with Lauren Williams and Sylvie Corteel, in particular Lauren Williams' suggestion of connecting modified Macdonald polynomials to a particle process via tableaux and the queue inversion statistic. OM also gratefully acknowledges conversations with Jim Haglund and Sarah Mason. This material is based upon work supported by the Swedish Research Council under grant no. 2016-06596 while the authors were in residence at Institut Mittag-Leffler in Djursholm, Sweden during the spring semester of 2020. The first author (AA) was partially supported by the UGC Centre for Advanced Studies and by Department of Science and Technology grant
EMR/2016/006624.
The second author (OM) was supported by NSF grants DMS-1704874 and DMS-1953891.

\bibliographystyle{alpha}
\bibliography{Macbib}

\newcommand{\etalchar}[1]{$^{#1}$}
\begin{thebibliography}{CGdGW16}

\bibitem[AMM]{AMM2}
A.~Ayyer, O.~Mandelshtam, and J.~B. Martin.
\newblock Modified {M}acdonald polynomials and the multispecies zero range
  process: {II}.
\newblock In preparation.

\bibitem[Can17]{cantini-2017}
L.~Cantini.
\newblock Asymmetric simple exclusion process with open boundaries and
  {K}oornwinder polynomials.
\newblock In {\em Annales Henri Poincar{\'e}}, volume~18 of {\em 4}, pages
  1121--1151. Springer, 2017.

\bibitem[CdGW15]{CGW-2015}
L.~Cantini, J.~de~Gier, and M.~Wheeler.
\newblock Matrix product formula for {M}acdonald polynomials.
\newblock {\em J. Phys. A}, 48(38):384001, 25, 2015.

\bibitem[CGdGW16]{cantini-etal-2016}
L.~Cantini, A.~Garbali, J.~de~Gier, and M.~Wheeler.
\newblock Koornwinder polynomials and the stationary multi-species asymmetric
  exclusion process with open boundaries.
\newblock {\em J. Phys. A}, 49(44):444002, 2016.

\bibitem[CHM{\etalchar{+}}20]{compactformula}
S.~Corteel, J.~Haglund, O.~Mandelshtam, S.~Mason, and L.~K. Williams.
\newblock Compact formulas for {M}acdonald polynomials and quasisymmetric
  {M}acdonald polynomials, 2020.
\newblock \url{https://arxiv.org/abs/2004.11907}.

\bibitem[CMW17]{CMW-2017}
S.~Corteel, O.~Mandelshtam, and L.~K. Williams.
\newblock Combinatorics of the two-species {ASEP} and {K}oornwinder moments.
\newblock {\em Adv. Math.}, 321:160--204, 2017.

\bibitem[CMW18]{CMW18}
S.~Corteel, O.~Mandelshtam, and L.~K. Williams.
\newblock From multiline queues to {M}acdonald polynomials via the exclusion
  process.
\newblock \url{https://arxiv.org/abs/1811.01024}, 2018.

\bibitem[CW11]{CW11}
S.~Corteel and L.~K. Williams.
\newblock Tableaux combinatorics for the asymmetric exclusion process and
  {A}skey-{W}ilson polynomials.
\newblock {\em Duke Math. J.}, 159(3):385--415, 2011.

\bibitem[CW18]{CW18}
S.~Corteel and L.~K. Williams.
\newblock Macdonald-{K}oornwinder moments and the two-species exclusion
  process.
\newblock {\em Selecta Math. (N.S.)}, 24(3):2275--2317, 2018.

\bibitem[FM07]{FM07}
P.~A. Ferrari and J.~B. Martin.
\newblock Stationary distributions of multi-type totally asymmetric exclusion
  processes.
\newblock {\em Ann.\ Probab.}, 35:807--832, 2007.

\bibitem[GH93]{garsia-haiman-1993}
A.~M. Garsia and M.~Haiman.
\newblock A graded representation model for {M}acdonald{\textquoteright}s
  polynomials.
\newblock {\em Proceedings of the National Academy of Sciences},
  90(8):3607--3610, 1993.

\bibitem[Gir29]{girard-1884}
A.~Girard.
\newblock {\em Invention nouvelle en l'alg{\`e}bre}.
\newblock Amsterdam, 1629.

\bibitem[GR04]{gasper-rahman-2004}
G.~Gasper and M.~Rahman.
\newblock {\em Basic hypergeometric series}, volume~96.
\newblock Cambridge University Press, 2004.

\bibitem[GW20]{garbali-wheeler-2020}
Alexandr Garbali and Michael Wheeler.
\newblock Modified {M}acdonald polynomials and integrability.
\newblock {\em Comm. Math. Phys.}, 374(3):1809--1876, 2020.

\bibitem[Hai99]{Hai99}
M.~Haiman.
\newblock Macdonald polynomials and geometry.
\newblock In {\em New perspectives in algebraic combinatorics}, volume~38 of
  {\em Math. Sci. Res. Inst. Publ.}, pages 207--254. Cambridge University
  Press, 1999.

\bibitem[Hai02]{Haiman02}
M.~Haiman.
\newblock Notes on {M}acdonald polynomials and the geometry of {H}ilbert
  schemes.
\newblock In {\em Symmetric functions 2001: surveys of developments and
  perspectives}, volume~74 of {\em NATO Sci. Ser. II Math. Phys. Chem.}, pages
  1--64. Kluwer Acad. Publ., Dordrecht, 2002.

\bibitem[HHL04]{HHL05}
J.~Haglund, M.~Haiman, and N.~Loehr.
\newblock A combinatorial formula for {M}acdonald polynomials.
\newblock {\em J. Amer. Math. Soc}, 18:735--761, 2004.

\bibitem[HHL{\etalchar{+}}05]{LLT2}
J.~Haglund, M.~Haiman, N.~Loehr, J.~B. Remmel, and A.~Ulyanov.
\newblock A combinatorial formula for the character of the diagonal
  coinvariants.
\newblock {\em Duke Math. J.}, 126(2):195--232, 2005.

\bibitem[KOW17]{kuniba-okado-watanabe-2017}
Atsuo Kuniba, Masato Okado, and Satoshi Watanabe.
\newblock Integrable structure of multispecies zero range process.
\newblock {\em SIGMA Symmetry Integrability Geom. Methods Appl.}, 13:Paper No.
  044, 29, 2017.

\bibitem[LLT97]{LLT}
A.~Lascoux, B.~Leclerc, and J.-Y. Thibon.
\newblock Ribbon tableaux, {H}all-{L}ittlewood functions, quantum affine
  algebras, and unipotent varieties.
\newblock {\em J. Math. Phys.}, 38(2):1041--1068, 1997.

\bibitem[LN12]{LoehrNiese}
N.~Loehr and E.~Niese.
\newblock A bijective proof of a factorization formula for specialized
  {M}acdonald polynomials.
\newblock {\em Ann. Comb.}, 16(4):815--828, 2012.

\bibitem[Mac88]{Mac88}
I.~Macdonald.
\newblock A new class of symmetric functions.
\newblock {\em S{\'e}m. Lothar. Combin}, 20, 1988.

\bibitem[Mac95]{Macdonald}
I.~Macdonald.
\newblock {\em Symmetric functions and {H}all polynomials}.
\newblock Oxford Mathematical Monographs. The Clarendon Press, Oxford
  University Press, New York, second edition, 1995.
\newblock With contributions by A. Zelevinsky, Oxford Science Publications.

\bibitem[Mar20]{martin-2020}
J.~B. Martin.
\newblock Stationary distributions of the multi-type {A}{S}{E}{P}.
\newblock {\em Electron. J. Probab.}, 25:41 pp., 2020.

\bibitem[MR04]{MarshRietsch}
R.~J. Marsh and K.~Rietsch.
\newblock Parametrizations of flag varieties.
\newblock {\em Represent. Theory}, 8:212--242 (electronic), 2004.

\bibitem[Sas99]{sasamoto-1999}
T.~Sasamoto.
\newblock One-dimensional partially asymmetric simple exclusion process with
  open boundaries: orthogonal polynomials approach.
\newblock {\em J. Phys. A}, 32(41):7109--7131, 1999.

\bibitem[Spi70]{spitzer-1970}
F.~Spitzer.
\newblock Interaction of {M}arkov processes.
\newblock {\em Adv. Math.}, 5(2):246 -- 290, 1970.

\bibitem[Sta99]{stanley-ec2}
R.~P. Stanley.
\newblock {\em Enumerative combinatorics. {V}ol. 2}, volume~62 of {\em
  Cambridge Studies in Advanced Mathematics}.
\newblock Cambridge University Press, Cambridge, 1999.

\bibitem[Sta12]{stanley-ec1}
R.~P. Stanley.
\newblock {\em Enumerative combinatorics. {V}olume 1}, volume~49 of {\em
  Cambridge Studies in Advanced Mathematics}.
\newblock Cambridge University Press, second edition, 2012.

\bibitem[Tak15]{takeyama-2015}
Yoshihiro Takeyama.
\newblock Algebraic construction of multi-species q-boson system, 2015.
\newblock \url{https://arxiv.org/abs/1507.02033}.

\bibitem[USW04]{USW}
M.~Uchiyama, T.~Sasamoto, and M.~Wadati.
\newblock Asymmetric simple exclusion process with open boundaries and
  {A}skey-{W}ilson polynomials.
\newblock {\em J. Phys. A}, 37(18):4985--5002, 2004.

\end{thebibliography}


\end{document}